
\documentclass{amsart}
\usepackage{amssymb,amsrefs}
\usepackage[colorlinks]{hyperref}
\usepackage{booktabs}
\usepackage[all]{xy}

\theoremstyle{plain}
\newtheorem{theorem}{Theorem}[section]
\newtheorem{proposition}[theorem]{Proposition}
\newtheorem{lemma}[theorem]{Lemma}
\newtheorem{corollary}[theorem]{Corollary}
\newtheorem{conjecture}[theorem]{Conjecture}
\newtheorem{definition}[theorem]{Definition}

\theoremstyle{definition}
\newtheorem{remark}[theorem]{Remark}
\newtheorem{example}[theorem]{Example}


\DeclareMathOperator{\Art}{Art}
\DeclareMathOperator{\Cl}{Cl}
\DeclareMathOperator{\id}{id}
\DeclareMathOperator{\Gal}{Gal}
\DeclareMathOperator{\Hom}{Hom}
\DeclareMathOperator{\Mat}{Mat}
\DeclareMathOperator{\rank}{rank}
\DeclareMathOperator{\im}{im}
\DeclareMathOperator{\ord}{ord}
\DeclareMathOperator{\F2}{{{\FF}_2}}

\newcommand{\legendre}[2]{(\frac{#1}{#2})}
\newcommand{\Legendre}[2]{\left(\frac{#1}{#2}\right)}

\newcommand{\lqedhere}{\ensuremath{\text{\qedhere\hspace{-1em}}}}


\newcommand{\FF}{{\mathbf{F}}}
\newcommand{\RR}{{\mathbf{R}}}
\newcommand{\Que}{{\mathbf{Q}}}
\newcommand{\Zee}{{\mathbf{Z}}}

\newcommand{\gotha}{{\mathfrak{a}}}
\newcommand{\gothc}{{\mathfrak{c}}}
\newcommand{\gothm}{{\mathfrak{m}}}
\newcommand{\gothp}{{\mathfrak{p}}}

\newcommand{\Ocal}{{\mathcal{O}}}
\newcommand{\calD}{{\mathcal{D}}}
\newcommand{\calF}{{\mathcal{F}}}

\newcommand{\tto}{\longrightarrow}
\newcommand{\equi}{\Longleftrightarrow}
\newcommand{\congr}{\equiv}
\newcommand{\iso}{\cong}
\newcommand{\mapright}[1]{\mathop{\longrightarrow}\limits^{#1}}
\newcommand{\ab}{\textup{ab}}

\renewcommand{\mod}{\bmod}

\newcommand{\mybar}[1]{
  \mathchoice
  {#1\llap{$\overline{\phantom{\displaystyle\rm#1}}$}}
  {#1\llap{$\overline{\phantom{\textstyle\rm#1}}$}}
  {#1\llap{$\overline{\phantom{\scriptstyle\rm#1}}$}}
  {#1\llap{$\overline{\phantom{\scriptscriptstyle\rm#1}}$}}
}  
\renewcommand{\bar}{\mybar}


\begin{document}

\title[Redei reciprocity, governing fields, and negative Pell]
{Redei reciprocity, governing fields, and negative Pell}

\author[Peter Stevenhagen]{Peter Stevenhagen}
\address{Mathematisch Instituut,
         Universiteit Leiden, 
         Postbus 9512, 
         2300 RA Leiden, The Netherlands}
\email{psh@math.leidenuniv.nl}

\date{\today}
\keywords{quadratic class group, reciprocity law, 
governing field, negative Pell}

\subjclass[2010]{Primary 11R11, 11R37; Secondary 11R16}

\begin{abstract}
We discuss the origin, an improved definition and the key reciprocity property
of the trilinear symbol introduced by R\'edei \cite{Redei}
in the study of 8-ranks of narrow class groups of quadratic number fields.
It can be used to show that such 8-ranks are `governed' by Frobenius conditions
on the primes dividing the discriminant, a fact used
in the recent work of A. Smith \cites{Smith, Smith2}.
In addition, we explain its impact in the progress towards
proving my conjectural density
for solvability of the negative Pell equation $x^2-dy^2=-1$.
\end{abstract}
\maketitle

\section{Introduction}
\label{S:intro}

\noindent
In a 1939 Crelle paper \cite{Redei}, the Hungarian
mathematician L\'aszl\'o R\'edei introduced a trilinear quadratic symbol
$[a,b,c]\in\{\pm1\}$ for quadratic discriminants $a, b\in\Zee$
and positive squarefree integers $c$ satisfying a number of conditions.
He used his symbol to describe 8-ranks of quadratic class groups,
much in the way he had described the 4-ranks of these class groups
in terms of Legendre symbols in his earlier work~\cite{Redei4}.
His definition of the symbol, as a Jacobi symbol in the quadratic field
$\Que(\sqrt a)$, is somewhat involved, and seems to depend on many choices.
Moreover, it only allows for a limited `symmetry' of the symbol in
its arguments, as infinite primes are disregarded in his definition.

An improved definition in class field theoretic terms was 
proposed in 2007 by Jens Corsman \cite{Corsman}.
He imposes fewer conditions on the arguments of the symbol,
which are most conveniently taken in the group $\Que^*/{\Que^*}^2$
of non-zero rational numbers modulo squares,
requiring them to have relative quadratic Hilbert symbols
\begin{equation}
\label{eqn:hilbabc}
(a,b)_p=(a,c)_p=(b,c)_p=1
\end{equation}
at all primes $p$, and to satisfy the coprimality condition 
\begin{equation}
\label{eqn:abccoprime}
\gcd(\Delta(a),\Delta(b),\Delta(c))=1
\end{equation}
for the discriminants of the associated quadratic fields, with 
$\Delta(1)=1$.
The R\'edei symbol is then defined as a product
$[a,b,c]=\prod_{p|c} [a,b,c]_p$
of `local' symbols at the primes
$p$ dividing the squarefree integer representing $c$.
It is essential to include the infinite
prime $p=\infty$ in the product, with $\infty|c$ 
having the meaning $c<0$.
A correct definition of $[a,b,c]_p$
leads to a striking feature that we baptize
\emph{R\'edei's reciprocity~law\/}.
\begin{theorem}
\label{thm:reclaw}
For $a, b, c \in\Que^*/{\Que^*}^2$ satisfying
\eqref{eqn:hilbabc} and \eqref{eqn:abccoprime},
the R\'edei symbol $[a,b,c]$ 
is linear in each of its arguments, and satisfies
the reciprocity~law
$$
[a,b,c]=[b,a,c]=[a,c,b].
$$
\end{theorem}

\noindent
The R\'edei symbol traditionally has its values in $\{\pm 1\}$,
as it may be computed as a Jacobi symbol in a quadratic field.
Moreover, it has a definition as a product of local symbols that
in \eqref{eqn:ppartishilb} turn out to be
quadratic Hilbert symbols, which invariably have
values in $\{\pm 1\}$, and satisfy a \emph{product} formula
on which the proof of Theorem~\ref{thm:reclaw} is based.
However, the R\'edei symbol is typically used in a linear algebra setting
over the field of two elements $\FF_2$.
It therefore makes sense to take its value in $\FF_2$, as we will do
in our formal Definition \ref{def:redeisymbol}.

Part of the perfect symmetry of the symbol $[a,b,c]$ in its arguments
is immediate from the definition,
as $[a,b,c]$ is an Artin symbol depending on $c$
in a cyclic quartic extension $K=\Que(\sqrt{ab})\subset F_{a,b}$
that depends symmetrically on $a$ and~$b$.
Symmetry involving $c$ is a true reciprocity:
we may swap $b$ and $c$ in the symbol by a (non-obvious)
application of quadratic reciprocity over $\Que(\sqrt a)$.

The auxiliary field $F_{a,b}$ occurring in the definition of $[a,b,c]$,
which is only unique up to twisting by a finite group $T_{a,b}$
of quadratic characters \eqref{eqn:Tab},
is the most complicated ingredient in the definition. 
Corsman failed to notice that $K\subset F_{a,b}$ may be ramified over 2,
and that one has to require \emph{minimal} ramification at 2
for $K\subset F_{a,b}$ in order for $[a,b,c]$ to be well-defined, 
independently of the choice of $F_{a,b}$.

In the case of prime arguments $a,b,c \congr 1\mod 4$,
no dyadic ramification subtleties arise, and the symbol has been
interpreted by Morishita \cite{Morishita}*{Section 8.2}
as an arithmetic Milnor invariant, leading to a description
as a triple Massey product that is useful in the study of
pro-2-extensions of $\Que$ with given ramification locus~\cite{Gaertner}.

Although Galois cohomology does play a role in Corsman's approach
to the R\'edei symbol, its relation to Massey products
and the applications of R\'edei reciprocity
to the \emph{average} behavior of the 2-part of 
imaginary quadratic class groups
in the recent work of Smith \cites{Smith, Smith2},
neither the definition of the symbol nor the proof of its symmetry
properties needs it, and we do not use it in this paper.
Galois cohomology may be needed to find generalizations of the R\'edei symbol.
In fact, the linearity properties of the symbol make it one 
of the rare trilinear maps that `naturally occur' in mathematics,
and it is an interesting question whether the symbol
has variants having properties of cryptographic interest
in the sense of \cite{BonehSil}.

\medskip\noindent
In this paper we approach R\'edei's symbol along historical lines, showing
how it arises in the study of the 2-part of the
narrow class group $C$ of a quadratic field~$K$ of discriminant~$D$.
Starting from old results in Section \ref{section:2-rank}
on the 2-rank of~$C$, we describe the 4-rank of $C$ in terms of
the \emph{R\'edei matrix} $R_4=R_4(D)$, a matrix over the field $\FF_2$
of 2 elements with entries that are essentially $\FF_2$-valued
relative Legendre symbols of the primes dividing~$D$
(Theorem \ref{theorem:4-rank}).
Linear algebra also gives the 8-rank of $C$ in terms of
a matrix $R_8=R_8(D)$ over $\FF_2$ (Theorem \ref{theorem:8-rank}),
but this time its entries are ($\FF_2$-valued)
\emph{R\'edei symbols} $[d_1, d_2, m]$,
given in Definition~\ref{def:redeisym} as the Artin symbol of
an ambiguous ideal in $K$ of norm $m$
in an unramified cyclic quartic extension 
$K=\Que(\sqrt{d_1d_2})\subset F_{d_1, d_2}$
having $\Que(\sqrt {d_1}, \sqrt{d_2})$ as its intermediate
quadratic extension.
In Section \ref{S:Computing Redei-symbols},
we explicitly compute $[d_1,d_2,m]$ in a field $F_{d_1, d_2}$
obtained by twisting the field $F(x,y,z)$ in \eqref{eqn:Fxyz}
associated to a primitive integral point on the conic 
$$
x^2-d_1y^2-d_2z^2=0
$$
by a quadratic character to ensure that $K\subset F_{d_1, d_2}$
is unramified at 2.

Section \ref{S:Discovering Redei reciprocity}
shows how R\'edei's reciprocity law is suggested by
the behavior of small examples, and indicates how the 
general symbol $[a, b,c]$ should be defined in order to
obtain reciprocity.
The precise definitions are in Section \ref{S:Redei symbols},
leading to a proof of Theorem \ref{thm:reclaw} in
Section \ref{S:Proving Redei reciprocity}.

As an immediate application of R\'edei reciprocity,
Section~\ref{S:Governing fields} shows, following Corsman,
how it yields the existence of
\emph{governing fields} for the 8-rank of class groups
in 1-parameter families $\Que(\sqrt{dp})$, with $d$ a fixed 
integer and $p$ a variable prime, a result that was originally
obtained by different means in 1988 in \cite{Stev}, and that
is at the basis of Smith's work.
Section 10 discusses its impact on my now classical conjecture
\ref{conj:negpell} on the number of real quadratic fields with fundamental
unit of norm $-1$ or, equivalently, the asymptotic number of squarefree
$d\in\Zee_{>1}$ for which the \emph{negative Pell equation}
\begin{equation}
x^2-dy^2=-1
\end{equation}
is solvable in integers $x, y\in\Zee$.

\section{The 2-rank}
\label{section:2-rank}

\noindent
Let $d\ne 1$ be a squarefree integer, $K=\Que(\sqrt d)$ the
corresponding quadratic field, $D\in\{d, 4d\}$ the discriminant of $K$,
and $C=\Cl^+_K=\Cl^+(\Ocal_K)$ the {\it narrow\/} class group of $K$, i.e.,
the quotient $C=I/P^+$ of the group $I$ of fractional $\Ocal_K$-ideals
by the subgroup of principal ideals $(x)=x\Ocal_K$ with
generator of \emph{positive} norm $N(x)$.
The narrow class group maps surjectively to 
the ordinary class group $\Cl_K$ of $K$, and
we have an exact sequence
\begin{equation}
\label{eqn:narrow-ordinary}
0\to \langle F_\infty\rangle \tto C\tto \Cl_K\to 0
\end{equation}
in which $F_\infty$, the \emph{Frobenius at $\infty$}, denotes
the ideal class $[(\sqrt d)]\in C$.
This is the trivial element in $C$ if $K$ is imaginary quadratic, and 
also if $K$ is real quadratic with fundamental unit $\varepsilon_d$
of norm $N(\varepsilon_d)=-1$.
If $K$ is real quadratic with $N(\varepsilon_d)=1$,
then $F_\infty$ is of order 2, and $C$ has twice the size of $\Cl_K$.

Describing the 2-part $C[2^\infty]\subset C$ consisting of all 
2-power torsion elements in~$C$
can be done by specifying, for $k\ge1$, the $2^k$-rank
$$
r_{2^k}=r_{2^k}(D)= \dim_{\F2} C[2^k]/C[2^{k-1}]=\dim_{\F2} 2^{k-1}C/2^kC.
$$
The sequence of non-negative integers $r_2, r_4, r_8, \ldots$
is non-increasing, and we have $r_{2^k}=0$ for $k$ sufficiently large.

The 2-rank $r_2=\dim_{\F2} C[2]=\dim_{\F2} C/2C$
was already determined by Gauss, 
who defined~$C$ in terms of binary quadratic forms.
To state his result, we factor $D$ as a product
\begin{equation}
\label{eqn:Dfactorization}
D=\prod_{i=1}^t p^*_i = t_D \prod_{p|D \text{ odd}} p^*
\end{equation}
of signed odd prime discriminants $p^*=(-1)^{(p-1)/2}p\congr 1\mod 4$
and a discriminantal 2-part $t_D\in\{1, -4, \pm 8\}$
that we sloppily denote by $2^*$ in case $D$ is even.
We let $\gothp_i|p_i$ be the prime of $K$ lying over $p_i$.
It satisfies $\gothp_i^2=(p_i)$, so we have $[\gothp_i]\in C[2]$.
\begin{theorem}
\label{theorem:2-rank}
We have $r_2=t-1$, with $t$ the number of prime divisors of~$D$.
\end{theorem}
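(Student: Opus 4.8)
The plan is to compute $r_2 = \dim_{\F2} C/C^2$ by class field theory, identifying $C/C^2$ with the Galois group over $K$ of the \emph{genus field} of $K$. Write $H$ for the narrow Hilbert class field of $K$, so that the Artin map gives an isomorphism $C \isar \Gal(H/K)$. Since $N_{K/\Que}(\gotha)\Ocal_K$ is generated by a positive rational integer and hence trivial in $C$, the nontrivial automorphism $\sigma$ of $K/\Que$ satisfies $\gotha^\sigma = \gotha^{-1}$ in $C$; that is, $\sigma$ acts on $C \iso \Gal(H/K)$ as inversion. A subextension $K \subseteq L \subseteq H$, the fixed field of some $N \subseteq C$, is then automatically normal over $\Que$ because $N$ is $\sigma$-stable, and $L/\Que$ is abelian exactly when $\sigma$ acts trivially on $C/N$, i.e.\ when $C^2 \subseteq N$. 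Hence the maximal such $L$, namely the genus field $G$ (the largest extension of $K$ inside $H$ that is abelian over $\Que$), satisfies $\Gal(G/K) \iso C/C^2$, and I am reduced to showing $[G:K] = 2^{t-1}$.

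For the lower bound $r_2 \ge t-1$ I would exhibit the explicit field $\Que(\sqrt{p_1^*}, \dots, \sqrt{p_t^*})$ attached to the factorization $D = \prod_i p_i^*$ of \eqref{eqn:Dfactorization} into prime discriminants, and check that it lies in $G$. It is elementary $2$-abelian over $\Que$ of rank $t$, and it contains $K$ because $\sqrt D = \prod_i \sqrt{p_i^*}$; this single relation gives index $2^{t-1}$ over $K$. The point to verify is that it is unramified over $K$ at every finite prime. Each $\Que(\sqrt{p_i^*})$ has prime discriminant $p_i^*$ and so is ramified only at the prime $p_i$; at an odd $p=p_i \mid D$ the inertia subgroup of the compositum is therefore generated by the involution negating $\sqrt{p_i^*}$ and fixing the remaining radicals, which negates $\sqrt D$ and hence restricts to $\sigma$ on $K$. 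Thus inertia meets $\Gal(\,\cdot\,/K)$ trivially and the extension is unramified over $K$ at $p_i$; being abelian over $\Que$ it sits inside $G$, giving $[G:K] \ge 2^{t-1}$.

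For the matching upper bound I would take any $L$ with $K\subseteq L$, $L/\Que$ abelian and $L/K$ unramified at all finite primes, and bound $[L:K]$. For each finite prime $\ell$ the inertia group $I_\ell(L/\Que)$ injects into $I_\ell(K/\Que)$, since its kernel is the inertia of $L/K$, which is trivial; thus $I_\ell(L/\Que)$ has order at most $2$ for $\ell \mid D$ and is trivial otherwise, as $K/\Que$ is unramified away from $D$. Because $\Que$ admits no nontrivial everywhere-finitely-unramified extension, $\Gal(L/\Que)$ is generated by these inertia subgroups, hence is elementary $2$-abelian of rank at most $t$, whence $[L:K] \le 2^{t-1}$. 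Applying this to $L=G$ yields $r_2 = t-1$. I expect the real obstacle to be the dyadic prime: confirming that $I_2(L/\Que)$ has order at most $2$ and singling out the correct quadratic piece at $2$ among $\Que(i)$, $\Que(\sqrt2)$, $\Que(\sqrt{-2})$ forces one to use the precise value of $t_D \in \{1,-4,\pm8\}$, which is exactly the dyadic bookkeeping the paper flags as delicate; the inertia comparison with $I_\ell(K/\Que)$ and the generation-by-inertia principle over $\Que$ are then the routine remaining ingredients.
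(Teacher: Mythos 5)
Your proof is correct and takes essentially the same route as the second of the paper's two proofs: identify $C/2C$ with $\Gal(H_2/K)$ for the genus field $H_2$ (using that $\sigma$ acts on $C$ by inversion), and compute its $\F2$-dimension from the explicit generation $H_2=\Que(\sqrt{p_1^*},\ldots,\sqrt{p_t^*})$ of \eqref{eqn:genusfield}, which the paper asserts and you actually prove via the inertia-injection and Minkowski arguments. One remark: your lower-bound unramifiedness check, stated only for odd $p_i$, applies verbatim at $p_i=2$ because the remaining radicals $\sqrt{p_j^*}$ have odd discriminants $p_j^*\congr 1\mod 4$, so the dyadic difficulty you anticipate in your closing paragraph is already disposed of by your own argument and no further bookkeeping with $t_D$ is needed.
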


\begin{proof}
There are two fundamentally different proofs of this result, describing
$C[2]$ and $C/2C$, respectively.
The first uses the $t$ \emph{ambiguous ideal classes}
$[\gothp_i]\in C[2]$ coming from the ramifying primes
$\gothp_i|p_i$ of $K$, the second the $t$ \emph{genus characters}
$\chi_{p_i^*}\in\widehat C[2]$ corresponding to the
discriminantal divisors $p^*_i$ in \eqref{eqn:Dfactorization}.

In the first proof, one exploits the Galois action on $C$ of
$\Gal(K/\Que)=\langle\sigma\rangle$, noting that
$\sigma$ acts by inversion as the norm map $N=1+\sigma$ annihilates $C$.
A little Galois cohomology shows that the 2-torsion subgroup $C[2]=C[\sigma-1]$
is generated by the $t$ classes $[\gothp_i]$, subject to a single relation.
This yields $r_2=t-1$.

For the second proof, one views $C=\Gal(H/K)$ under the Artin isomorphism
as the Galois group over $K$ of the narrow Hilbert class field $H$ of $K$.
Then $H$ is Galois over $\Que$ with dihedral Galois group
$$
\Gal(H/\Que)\iso \Gal(H/K)\rtimes \Gal(K/\Que)=C\rtimes \langle\sigma\rangle,
$$
as the surjection $\Gal(H/\Que)\to \Gal(K/\Que)=\langle\sigma\rangle$
is split and $\sigma$ acts by inversion.
The \emph{genus field} $H_2\subset H$ of $K$, which is defined as the
maximal subfield of $H$ that is abelian over $\Que$, has 
as its Galois group over $\Que$ the elementary abelian 2-group
\begin{equation}
\label{eqn:galh2q}
\Gal(H_2/\Que)=\Gal(H/\Que)^\ab = C/2C \times \langle\sigma\rangle.
\end{equation}
One can generate $H_2$ explicitly over $\Que$ by $t$ independent
square roots as
\begin{equation}
\label{eqn:genusfield}
H_2=\Que(\{\sqrt {p_i^*} : i= 1,2, \ldots, t\}),
\end{equation}
so $\Gal(H_2/\Que)$ is naturally an $\F2$-vector space of dimension $t$,
in which the subspace $C/2C=\Gal(H_2/K)\subset \Gal(H_2/\Que)$
has dimension $r_2=t-1$.
\end{proof}

\smallskip\noindent
The second proof of Theorem \ref{theorem:2-rank} shows that
the prime power discriminants $p_i^*|D$ in \eqref{eqn:Dfactorization}
yield an $\FF_2$-basis of the quadratic characters
on $\Gal(H_2/\Que)$, with
\begin{equation}
\label{eqn:chipi}
\chi_{p_i^*}: \Gal(H_2/\Que) \to \Gal(\Que(\sqrt {p_i^*})/\Que)\iso \F2
\end{equation}
giving the Galois action on $\sqrt {p_i^*}$.
Even though this Galois action is given by multiplication by $\pm1$,
it is convenient for our linear algebra purposes to
have \emph{additive} characters with values in $\Que/\Zee$,
and define the quadratic characters $\chi_{p_i^*}$ with values in
$\frac{1}{2}\Zee/\Zee=\FF_2$.

The character $\chi_{d_1}=\sum_{i\in S} \chi_{p_i}$
for a subset $S\subset \{1,2,\ldots, t\}$ corresponding to the
\emph{discriminantal divisor} $d_1=\prod_{i\in S} p_i^*$ of~$D$ gives the 
action on $\sqrt {d_1}$.
When restricted to $C/2C=\Gal(H_2/ K)\subset \Gal(H_2/\Que)$,
it yields a quadratic character in the character group 
\begin{equation}
\label{eqn:c-hat}
\widehat C = \Hom (C,\Que/\Zee)
\end{equation}
of $C$ that coincides with the character $\chi_{d_2}$
corresponding to the complementary divisor
$d_2=D/d_1=\prod_{i\notin S} p_i^*$.
R\'edei calls an unordered pair $(d_1, d_2)$ of quadratic discriminants
satisfying
\begin{equation}
\label{eqn:discdec}
D=d_1d_2
\end{equation}
a \emph{discriminantal decomposition} of~$D$.
It `is' the \emph{genus character}
$\chi_{d_1}=\chi_{d_2}\in \widehat C[2]$, and the
corresponding finitely unramified quadratic extension $K\subset E$
inside~$H$ is
\begin{equation}
\label{eqn:F2}
E=K(\sqrt {d_1})=\Que(\sqrt {d_1}, \sqrt {d_2})=K(\sqrt {d_2}).
\end{equation}

\section{The 4-rank and negative Pell}
\label{section:4-rank}

\noindent
The ambiguous ideal proof of Theorem \ref{theorem:2-rank} describes
the subgroup $C[2]\subset C$ as a quotient
of $\FF_2^t$ by a surjection
\begin{equation}
\label{eqn:alpha}
\alpha: \FF_2^t \tto C[2]
\end{equation}
that sends the $j$-th basis vector to the class $[\gothp_j]$.
The generator $A_D\in \FF_2^t$ of its 1-dimensional kernel 
encodes the unique non-trivial relation that exists
between the $t$ classes of the ramifying primes $\gothp_j|D$ of~$K$. 
In particular, it tells us whether the element
$F_\infty=[(\sqrt d)]\in C[2]$ in \eqref{eqn:narrow-ordinary} is trivial.
In the interesting case $D>0$, this amounts to
the fundamental unit of $K=\Que(\sqrt D)$ having norm $N(\varepsilon_d)=-1$.

The genus theory proof of Theorem \ref{theorem:2-rank} describes
the quotient $C/2C=\Gal(H_2/K)$ of~$C$ as a subspace
of $\Gal(H/\Que)=\FF_2^t$ under the inclusion map
\begin{equation}
\label{eqn:gamma}
\gamma: C/2C=\Gal(H_2/K)\tto \Gal(H_2/\Que)=\FF_2^t,
\end{equation}
with the $i$-th coordinate of $\gamma(\gotha)\in \FF_2^t$ for $[\gotha]\in C$
describing the action of the Artin symbol $\Art(\gotha, H/K) \in\Gal(H/K)$
on $\sqrt{p_i^*}$.
As elements of $\Gal(H_2/K)$
fix the product $\prod_{i=1}^t\sqrt{p_i^*}=\sqrt D$,
the map $\gamma$ embeds $C/2C$ as the `sum-zero-hyperplane' in $\FF_2^t$.
Equivalently, one can formulate this as in \eqref{eqn:alpha}
by saying that the subgroup
$\widehat C[2]\subset \widehat C$ of quadratic characters on $C$
is generated by the $t$ characters $\chi_{p_i^*}$, subject to the relation
that their sum
\begin{equation}
\label{eqn:chiD}
\chi_D= \sum_{i=1}^t \chi_{p_i^*},
\end{equation}
the Dirichlet character corresponding to $K$, 
is the character on $\Gal(H_2/\Que)$ in \eqref{eqn:galh2q}
that has kernel $C/2C$ and is trivial as an element of $\widehat C$.
This time, the relation holds no deeper information as it
is `the same' for all quadratic fields.

The 4-rank of $C$ is the $\F2$-dimension of the kernel
$C[2]\cap 2C$ of the natural~map 
$$
\varphi_4: C[2]\to C/2C,
$$ 
and we can find it by combining $\varphi_4$ with the surjection
$\alpha$ and the injection $\gamma$
from \eqref{eqn:alpha} and \eqref{eqn:gamma}
into a single $\F2$-linear \emph{R\'edei map}
\begin{equation}
\label{eqn:R4}
R_4:\quad
\FF_2^t \mapright{\alpha} C[2]
	\mapright{\varphi_4} C/2C \mapright{\gamma} \Gal(H_2/\Que)=\FF_2^t.
\end{equation}
We have
$$
1+r_4=
1+\dim_{\F2} \ker \varphi_4=\dim_{\F2} \ker  R_4 = t-\rank_{\F2} R_4,
$$
and writing $r_2=t-1$ as in Theorem \ref{theorem:4-rank},
we obtain the following result.
\begin{theorem}
\label{theorem:4-rank}
The $4$-rank of $C$ equals
$r_4 = r_2 - \rank_{\F2} R_4$. 
\qed
\end{theorem}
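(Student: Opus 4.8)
The plan is to read off $r_4$ as the $\F2$-dimension of $\ker\varphi_4$ and then to compute that dimension through the composite $R_4$ by a rank--nullity count on the ambient space $\FF_2^t$. First I would confirm the identification $r_4=\dim_{\F2}\ker\varphi_4$: by definition $r_4=\dim_{\F2}C[4]/C[2]=\dim_{\F2}2C/4C$, and a direct look at the cyclic factors $\Zee/2^{e_i}\Zee$ of the finite $2$-group $C$ shows that both quantities equal the number of indices with $e_i\ge 2$. This number coincides with $\dim_{\F2}(C[2]\cap 2C)=\dim_{\F2}\ker\varphi_4$, since an element of $C[2]$ maps to $0$ in $C/2C$ exactly when it already lies in $2C$.

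Next I would exploit the two structural facts about $\alpha$ and $\gamma$ supplied by the two proofs of Theorem~\ref{theorem:2-rank}: the map $\alpha\colon\FF_2^t\tto C[2]$ is surjective with a one-dimensional kernel (the single relation among the classes $[\gothp_i]$ of the ramifying primes), while $\gamma\colon C/2C\tto\Gal(H_2/\Que)$ is injective. Because $\gamma$ is injective, a vector $v\in\FF_2^t$ lies in $\ker R_4$ precisely when $\varphi_4(\alpha(v))=0$, that is, when $\alpha(v)\in\ker\varphi_4$; hence $\ker R_4=\alpha^{-1}(\ker\varphi_4)$. Since $\alpha$ is onto with one-dimensional kernel, this preimage has dimension $\dim_{\F2}\ker\varphi_4+1$, so $\dim_{\F2}\ker R_4=r_4+1$.

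Finally I would apply rank--nullity to $R_4$ on its source $\FF_2^t$, namely $\dim_{\F2}\ker R_4=t-\rank_{\F2}R_4$, and combine this with the previous step and with $r_2=t-1$ from Theorem~\ref{theorem:2-rank}. This yields
$$
r_4=\dim_{\F2}\ker R_4-1=(t-\rank_{\F2}R_4)-1=(t-1)-\rank_{\F2}R_4=r_2-\rank_{\F2}R_4,
$$
as claimed.

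I do not expect a serious obstacle here, as the genuine content already resides in Theorem~\ref{theorem:2-rank}. The only points demanding care are making the two ends of the composite fit together: one must use that the codomain embedding $\gamma$ contributes nothing to $\ker R_4$ (hence its injectivity is essential) and that the domain surjection $\alpha$ contributes exactly one dimension (hence the kernel of $\alpha$ must be the single relation described in the proof of Theorem~\ref{theorem:2-rank}, and nothing larger). Once these two inputs are in hand, the theorem follows as a bookkeeping consequence of rank--nullity.
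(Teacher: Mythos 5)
Your proposal is correct and takes essentially the same route as the paper: the paper also identifies $r_4$ with $\dim_{\F2}\ker\varphi_4$, uses the surjectivity of $\alpha$ (with its one-dimensional kernel) and the injectivity of $\gamma$ to get $1+\dim_{\F2}\ker\varphi_4=\dim_{\F2}\ker R_4=t-\rank_{\F2}R_4$, and concludes via $r_2=t-1$. Your explicit verification that $r_4=\dim_{\F2}(C[2]\cap 2C)$ via the cyclic decomposition of $C$ is merely a detail the paper asserts without proof.
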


\noindent
Explicit entries for the matrix
$R_4=(\varepsilon_{ij})_{i, j}\in\Mat_{t \times t}(\F2)$ 
can easily be given.
The entry $\varepsilon_{ij}$ describes
the action of the Artin symbol $\Art (\gothp_j, H/K)$
on $\sqrt{p_i^*}\in H_2\subset H$.
For $i\ne j$, it is an $\FF_2$-valued Legendre (or for $p_j=2$ Kronecker)
symbol:
\begin{equation}
\label{eqn:R4entries}
\varepsilon_{ij}=\chi_{p_i^*}([\gothp_j])=
\Legendre{p_i^*}{p_j}\in\FF_2.
\end{equation}
The diagonal entries $\varepsilon_{jj}=\sum_{i\ne j} \varepsilon_{ij}$
of $R_4$ follow from the sum-zero-property of $\gamma$:
the rows of $R_4$ add up to $0\in\FF_2^t$.
This simple description of $r_4$ in terms of the relative 
quadratic behavior of the primes $p_i$ dividing $D$
goes back to R\'edei \cite{Redei4}.

The somewhat hybrid notation in \eqref{eqn:R4entries} uses
an identification `$\{\pm1\}=\FF_2$' of multiplicative and additive 
value groups of quadratic symbols and characters.
The same notational ambiguity inevitably occurs for R\'edei symbols,
which are quadratic symbols in quadratic fields, with
values that are traditionally taken in $\{\pm1\}$, but that also
occur as entries of matrices over $\FF_2$.
We have mostly chosen additive values of characters and symbols in this 
paper, but multiplicative values are used in the proof of 
R\'edei's reciprocity law in Section \ref{S:Proving Redei reciprocity},
which relates R\'edei symbols to quadratic Hilbert symbols.

\bigskip

\noindent
For $K=\Que(\sqrt d)$ real quadratic of discriminant $D\in\{d, 4d\}$,
the fundamental unit has norm $N(\varepsilon_d)=-1$ if and only if the
\emph{negative Pell equation}
\begin{equation}
\label{eqn:negpell}
x^2-dy^2=-1
\end{equation}
is solvable in \emph{integers} $x, y\in\Zee$.
If it exists, the smallest solution to \eqref{eqn:negpell}
can be found from the continued fraction expansion of $\sqrt d$,
which then needs to have \emph{odd} period length,
or from general unit finding algorithms in number rings \cite{Lenstra}.
For $d=D\congr 5\mod 8$,
this solution corresponds to the \emph{cube} of $\varepsilon_d$ in case
the fundamental unit
$\varepsilon_d\in\Ocal_K=\Zee[(1+\sqrt d)/2]$
does not lie in $\Zee[\sqrt d]$.
Conjecturally~\cite{StevEis}, this happens 
for a fraction $2/3$ of squarefree $D\congr 5\mod 8$.

For solvability of \eqref{eqn:negpell} in \emph{rational numbers},
which amounts to $K$ having elements of norm $-1$ or, equivalently,
having quadratic Hilbert symbols
\begin{equation}
\label{eqn:-1norm}
(d,-1)_p=(D, -1)_p=1\qquad\hbox{for all primes $p\le\infty$,}
\end{equation}
there is an easy criterion:
solvability occurs if and only if $d$ (or $D$) is positive and without prime
factors $p\congr 3\mod 4$.
By \cite{Rieger}*{Satz 3}, the set $\calD$ of such $D$ is a thin set,
asymptotically containing $cX/\sqrt{\log X}$ elements $D<X$,
for some explicit $c\approx .348\in \RR_{>0}$.
For $D\in \calD$ we have $t_D\in\{1, 8\}$ and $p^*=p$
in \eqref{eqn:Dfactorization}, and by (7)
\begin{equation}
\label{eqn:H2real}
D \in \calD
\quad\Longleftrightarrow\quad
H_2 \hbox{ is totally real.}
\end{equation}
The class field theoretic implication of \eqref{eqn:narrow-ordinary}
is that the set ${\calD}^-\subset \calD$ of discriminants for which 
the negative Pell equation \eqref{eqn:negpell} is solvable in integers
has a similar description:
\begin{equation}
\label{eqn:Hreal}
D \in \calD^-
\quad\Longleftrightarrow\quad
H \hbox{ is totally real.}
\end{equation}
Indeed, the narrow Hilbert class field $H$ of $K=\Que(\sqrt d)$
is totally real if and only if the
Frobenius at infinity $F_\infty\in C$ at the real
primes of $K=\Que(\sqrt d)$ is trivial on $H$.

Assume $D\in \calD$.
Then the map
$\alpha: \FF_2^t\to C[2]$ in \eqref{eqn:alpha} describes
$F_\infty=[(\sqrt d)]$ as
$$
F_\infty=\alpha[(1)_{i=1}^t] \in C[2]\cap 2C,
$$
and $V=\alpha^{-1}(C[2]\cap 2C)\subset \FF_2^t$
is an $(r_4+1)$-dimensional subspace containing~$(1)_{i=1}^t$.
The linear map $\alpha|_V: V\to C[2]\cap 2C$ is surjective with
kernel $\FF_2\cdot A_D$, and
numerical evidence \cite{BosSte} suggests that $A_D$ behaves like
a `random' non-zero element in $V$.
As $D\in\calD$ satisfies
$$
D\in {\calD}^-
\Longleftrightarrow
A_D=(1)_{i=1}^t,
$$
we expect that for the discriminants $D\in\calD$ having 4-rank $r_4=e$,
the negative Pell equation \eqref{eqn:negpell}
will be solvable with `probability' $(\#V-1)^{-1}=(2^{r_4+1}-1)^{-1}$.

To heuristically find the density for $\calD^-$ in $\calD$, 
write $\calD=\bigcup_{e=0}^\infty \calD(e)$, with $\calD(e)$ the subset
of $D\in \calD$ having 4-rank $r_4=e$. 
By Theorem \ref{theorem:4-rank}, a discriminant $D\in\calD$ is
in $\calD(e)$ if and only if its R\'edei matrix
$R_4\in \Mat_t(\FF_2)$ has corank $e+1$.
Quadratic reciprocity for the entries \eqref{eqn:R4entries} implies 
that the matrix $R_4$ is \emph{symmetric}, with rows and columns
adding up to 0.
The $(1,1)$-minor of $R_4$, which determines the
full matrix $R_4$, behaves as a random symmetric $(t-1)\times(t-1)$-matrix.
As the average number $t$ of primes factors of $D$ tends to infinity
with $D$, albeit very slowly, as $\log\log D$, we expect
the density of $\calD(e)$ in $\calD$ to equal
$P(e)=\lim_{n\to\infty}P_n(e)$, with $P_n(e)$ the fraction
of the ${n+1\choose 2}$
symmetric $n\times n$-matrices over $\FF_2$ having corank $e$.
In terms of the infinite product
\begin{equation}
\label{eqn:densalpha}
\alpha = \prod_{j\ \text{odd}} (1-2^{-j})=\prod_{j=1}^\infty (1+2^{-j})^{-1}
\approx .419422441,
\end{equation}
we have $P(0)=\alpha$, and more generally
$P(e)=\alpha\cdot \prod_{j=1}^e (2^j-1)^{-1}$ for $e\ge0$.

By the probability argument above,
we expect the natural density of $\calD^-(e)$ in $\calD(e)$ to be
$1/(2^{e+1}-1)$ for all $e\in\Zee_{\ge0}$. 
This leads to my 1992 conjecture for the solvability of the 
negative Pell equation \cite{StevPell}.
\begin{conjecture}
\label{conj:negpell}
The set $\calD^-$ of discriminants of quadratic fields
with fundamental unit of norm $-1$ has natural density
$$
\sum_{e=0}^\infty \frac{P(e)}{2^{e+1}-1} =
1-\alpha \approx .580577559
$$
inside the set $\calD$ of discriminants of quadratic fields
containing elements of norm~$-1$.
\end{conjecture}

\noindent
Fouvry and Kl\"uners proved in 2010 \cite{FouvKl} that
$\calD(e)$ does indeed have the expected density $P(e)$ in $\calD$.
For $e=0$ we have $2^{e+1}-1=1$ and $\calD(0)\subset \calD^-$,
as in the case $r_4=0$ the narrow Hilbert class field $H$ is totally real,
being a normal number field of \emph{odd} degree over the totally
real genus field $H_2$.
This immediately implies that $P(0)=\alpha$
is a lower bound for the lower density of $\calD^-$ in $\calD$.
To get better bounds, one needs control over the archimedean character of
$H$ for $e\ge1$.
We will address this in Section~\ref{S:negative Pell}.

\section{The 8-rank}
\label{section:8-rank}

\noindent
The 8-rank $r_8$ of $C$ equals the $\F2$-dimension of the kernel
of the natural map 
$$
\varphi_8: C[2]\cap 2C \tto 2C/4C
$$
between $r_4$-dimensional vector spaces over $\F2$.
Under the Artin isomorphism, the group $2C/4C$ is the Galois group
$\Gal(H_4/H_2)$, with $H_4\subset H$ the narrow 4-Hilbert class field of $K$.
We can restrict $\alpha$ in \eqref{eqn:alpha} to the kernel of
the 4-rank map $R_4$ from \eqref{eqn:R4} and compose with $\varphi_8$
to obtain an $\F2$-linear map
\begin{equation}
\label{eqn:R8}
R_8:\quad
\ker  R_4 \mapright{\alpha} C[2]\cap 2C
		\mapright{\varphi_8} 2C/4C=\Gal(H_4/H_2)\iso \FF_2^{r_4}
\end{equation}
defined on the $(r_4+1)$-dimensional space $\ker  R_4$.
Here we write $\Gal(H_4/H_2)\iso \FF_2^{r_4}$, in contrast to the equality
$\Gal(H_2/\Que)=\FF_2^t$ in \eqref{eqn:R4}, as we no longer have
an obvious choice for the basis of this $\FF_2$-vector space.
As $r_8 = \dim_{\F2} \ker \varphi_8$ is the codimension
of the image of $\varphi_8$ in $2C/4C$, we obtain the following
analogue of Theorem \ref{theorem:4-rank}.
\begin{theorem}
\label{theorem:8-rank}
The $8$-rank of $C$ equals
$r_8 = r_4-\rank_{\F2} R_8$.
\qed
\end{theorem}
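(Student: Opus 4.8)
The plan is to copy the linear-algebra scheme of Theorem~\ref{theorem:4-rank} one level up: just as $r_4$ was read off from $\rank_{\F2} R_4$, I want to read $r_8$ off from $\rank_{\F2} R_8$ by showing that the three-step composite \eqref{eqn:R8} has the same image as its middle arrow $\varphi_8$. Granting that, the theorem is immediate from rank-nullity, because $\varphi_8$ is a map between $r_4$-dimensional $\F2$-spaces, so
\[
\rank_{\F2} R_8 = \dim_{\F2}\im\varphi_8 = r_4 - \dim_{\F2}\ker\varphi_8 = r_4 - r_8 .
\]

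The one point that needs checking is that the restriction $\alpha\colon \ker R_4 \to C[2]\cap 2C$ is surjective, since then $R_8 = \varphi_8\circ(\alpha|_{\ker R_4})$ and $\varphi_8$ share an image once $2C/4C = \Gal(H_4/H_2)$ is identified with $\FF_2^{r_4}$ as in \eqref{eqn:R8}, the Artin map being an isomorphism. Here I would use that $\gamma$ in \eqref{eqn:gamma} is injective, so that $\ker R_4 = \alpha^{-1}(\ker\varphi_4) = \alpha^{-1}(C[2]\cap 2C)$; as $\alpha\colon\FF_2^t\to C[2]$ is already surjective by \eqref{eqn:alpha} and $C[2]\cap 2C\subseteq C[2]$, its restriction to this preimage lands onto $C[2]\cap 2C$. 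This also re-confirms the dimension count $\dim_{\F2}\ker R_4 = r_4+1$ recorded after \eqref{eqn:R8}, since $\ker(\alpha|_{\ker R_4}) = \ker\alpha$ is one-dimensional.

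There is no real obstacle at the level of this identity: the preceding discussion has already identified $r_8$ with $\dim_{\F2}\ker\varphi_8$ and with the codimension of $\im\varphi_8$ in $2C/4C$, so the theorem is a formal consequence of the surjectivity above. The genuine work is deferred to the explicit description of the matrix $R_8$, namely to pinning down the entries of $\varphi_8$ as Artin symbols in the extension $H_2\subset H_4$; it is precisely this that forces the passage to an unramified biquadratic field and produces R\'edei's symbol in Definition~\ref{def:redeisym}.
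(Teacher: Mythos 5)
Your proof is correct and follows essentially the same route as the paper: the theorem is a formal consequence of rank--nullity for $\varphi_8$ on the $r_4$-dimensional space $C[2]\cap 2C$, once one knows $\rank_{\F2} R_8=\rank_{\F2}\varphi_8$. You merely make explicit the one point the paper leaves implicit, namely that $\gamma$'s injectivity gives $\ker R_4=\alpha^{-1}(C[2]\cap 2C)$, so that $\alpha$ restricted to $\ker R_4$ surjects onto $C[2]\cap 2C$.
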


\noindent
In order to obtain a matrix representing $R_8$,
we want to represent $\Gal(H_4/H_2)$ in \eqref{eqn:R8}
as explicitly as we represented $\Gal(H_2/\Que)$ in \eqref{eqn:R4}.
R\'edei had already achieved this in a 1934 paper
with Reichardt \cite{RedRei}, which computed $r_4$ not as in
Theorem \ref{theorem:4-rank} but by an
explicit construction of 
$H_2\subset H_4$ in terms of $r_4$ cyclic quartic extensions
$K\subset F$ inside~$H$ that are $K$-linearly disjoint.
For such unramified $F$, the group $\Gal(F/\Que)$ is dihedral of order~8,
and the intersection $E=F\cap H_2$ equals $E = \Que(\sqrt {d_1}, \sqrt {d_2})$
for a discriminantal decomposition $D=d_1d_2$ as in \eqref{eqn:discdec}
and~\eqref{eqn:F2}.

R\'edei calls the decompositions $D=d_1d_2$ defining those
$E=\Que(\sqrt {d_1}, \sqrt {d_2})$ that arise as $F\cap H_2$ \emph{zweiter Art},
`of the second kind'.
For the corresponding quadratic character
$\chi\in \widehat C =\Hom (C,\Que/\Zee)$, it means
that we have $\chi=2\psi$ for a quartic character
$\psi$ defining~$K\subset F$.
By the duality of finite abelian groups,
we have~$\chi\in 2\widehat C$ if and only if $\chi$ vanishes on
the $2$-torsion subgroup~$C[2]$.
This leads to the following characterization of these quadratic characters.
\begin{lemma}
\label{lemma:F2-criterion}
For a quadratic character
$\chi\in \widehat C[2]$ defining $E=\Que(\sqrt{d_1},\sqrt{d_2})$
as in \eqref{eqn:discdec} and \eqref{eqn:F2}, having 
$\chi\in 2\widehat C$ is equivalent to each
of the following:
\begin{enumerate}
\item
there exists a cyclic quartic extension $K\subset F$ inside $H$
containing $E$;
\item
$\chi_{d_1}\circ R_4=\chi_{d_2}\circ R_4$ is the zero map;
\item
all ramified primes of $K$ split completely in $K\subset E$;
\item
for $i=1,2$ and $p|d_i$ prime we have $\legendre{D/d_i}{p}=1$.
\end{enumerate}
\end{lemma}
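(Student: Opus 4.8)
The plan is to read conditions (1)--(4) as four expressions of the single fact that the quadratic character $\chi\in\widehat C$ defining $E$ vanishes on the $2$-torsion subgroup $C[2]$, and to chain the equivalences as $(1)\Leftrightarrow(3)\Leftrightarrow(2)\Leftrightarrow(4)$. The first link is already available from the duality recalled just before the lemma: a cyclic quartic extension $K\subset F$ inside $H$ containing $E$ exists exactly when $\chi=2\psi$ for a quartic character $\psi\in\widehat C$, that is, when $\chi\in 2\widehat C$, and by duality $\chi\in 2\widehat C$ holds if and only if $\chi$ is trivial on $C[2]$. It therefore remains to recognise the condition $\chi|_{C[2]}=0$ in the three remaining guises.

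For $(3)$ I would invoke the first proof of Theorem~\ref{theorem:2-rank}, by which $C[2]$ is generated by the ambiguous classes $[\gothp_i]$ of the $t$ ramifying primes. Hence $\chi|_{C[2]}=0$ amounts to $\chi([\gothp_i])=0$ for all $i$. Under the Artin isomorphism $C=\Gal(H/K)$ the value $\chi([\gothp_i])$ is the image of $[\gothp_i]$ in $\Gal(E/K)$, so its vanishing is equivalent to $\Art(\gothp_i,E/K)$ being trivial, i.e.\ to the prime $\gothp_i$ splitting completely in the finitely unramified extension $K\subset E$. As the $\gothp_i$ are precisely the ramified primes of $K$, this is statement (3).

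To reach (2) I would route $\chi$ on $C[2]$ through the map $R_4=\gamma\circ\varphi_4\circ\alpha$ of \eqref{eqn:R4}. Since $\chi$ factors through $C/2C$, where it equals the restriction $\chi_{d_1}\circ\gamma$, evaluation on the $j$-th basis vector gives $\chi([\gothp_j])=\chi_{d_1}(R_4(e_j))$; thus $\chi|_{C[2]}=0$ is precisely $\chi_{d_1}\circ R_4=0$. That $\chi_{d_1}\circ R_4$ and $\chi_{d_2}\circ R_4$ coincide is automatic: their sum is $\chi_D\circ R_4$, and $\chi_D$, the ``sum of all coordinates'' functional attached to $\sqrt D$, vanishes on $\im\gamma$ because $\gamma$ lands in the sum-zero hyperplane. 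This yields (2).

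Finally, for $(2)\Leftrightarrow(4)$ I would compute $\chi_{d_1}(R_4(e_j))=\sum_{i\in S}\varepsilon_{ij}$ directly from the entries $(-1)^{\varepsilon_{ij}}=\Legendre{p_i^*}{p_j}$ of \eqref{eqn:R4entries}, where $d_1=\prod_{i\in S}p_i^*$. When $p_j\mid d_2$ every index $i\in S$ is off-diagonal and the sum exponentiates to $\Legendre{d_1}{p_j}$, so the condition becomes $\Legendre{D/d_2}{p_j}=1$. When $p_j\mid d_1$ I would first use the relation $\varepsilon_{jj}=\sum_{i\ne j}\varepsilon_{ij}$ to replace the diagonal term, turning $\sum_{i\in S}\varepsilon_{ij}$ into $\sum_{i\notin S}\varepsilon_{ij}$, which exponentiates to $\Legendre{d_2}{p_j}$ and gives $\Legendre{D/d_1}{p_j}=1$. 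Together these are exactly the conditions of (4). I expect this last step to be the only delicate point, as it requires the careful use of the sum-zero relation to absorb the diagonal entry $\varepsilon_{jj}$ and attention to the Kronecker-symbol conventions at the dyadic prime $p_j=2$; the earlier equivalences are formal once the duality and the description of $R_4$ are in place.
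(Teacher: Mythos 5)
Your argument is correct, and its core is the same as the paper's: both use the duality remark preceding the lemma to identify (1) with the vanishing of $\chi$ on the $2$-torsion subgroup $C[2]$, and both then read off (3) (resp.\ (2)) by evaluating $\chi$ on the generators $[\gothp_i]$ of $C[2]$ directly (resp.\ through $R_4=\gamma\circ\varphi_4\circ\alpha$). The one place where you take a genuinely different route is the link with (4). The paper derives (4) from (3): a ramified prime $\gothp|p$ of $K$ dividing, say, $d_1$ has residue field $\FF_p$, so it splits in the quadratic extension $K\subset K(\sqrt{d_2})$, unramified at $\gothp$, precisely when $\legendre{d_2}{p}=1$ --- a one-line decomposition-law statement with no matrix bookkeeping. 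You instead derive (4) from (2), expanding $\chi_{d_1}(R_4(e_j))=\sum_{i\in S}\varepsilon_{ij}$ via \eqref{eqn:R4entries} and, for $p_j\mid d_1$, invoking the diagonal convention $\varepsilon_{jj}=\sum_{i\ne j}\varepsilon_{ij}$ to turn this sum into $\sum_{i\notin S}\varepsilon_{ij}$, i.e.\ into the exponent of $\legendre{d_2}{p_j}$. Your computation checks out, including at $p_j=2$: the Kronecker symbol is multiplicative in the upper argument, and since the complementary divisor $D/d_i$ is a discriminant $\congr 1 \bmod 4$, its Kronecker symbol at $2$ equals $1$ exactly in the split case, so the conventions cause no discrepancy. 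What your route buys is a demonstration of how condition (4) is actually verified from the matrix $R_4$, and an explanation of why its diagonal entries are defined the way they are; what the paper's route buys is brevity, since passing through (3) makes the delicate absorption of the diagonal term --- which you correctly single out as the only non-formal step --- unnecessary.
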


\begin{proof}
Having $\chi=2\psi$ for a quartic character $\psi\in\widehat C$ 
defining $F$ as in (1) is equivalent to $\chi$ vanishing on
the subgroup $C[2]$ of ambiguous ideal classes generated by the classes of
the ramifying primes $\gothp|D=d_1d_2$ of $K$ as in \eqref{eqn:alpha}.
One can phrase this using the map $R_4$ from \eqref{eqn:R4} as in
(2), or in terms of the splitting of the ramifying primes in $K\subset E$
as in (3).
A ramifying prime of $K$ divides exactly one of $d_1$, $d_2$.
A prime $\gothp|p$ in $K$ dividing, say, $d_1$
splits completely in $K\subset E=K(\sqrt{d_2})$
if and only if the Legendre (or Kronecker) symbol 
$\legendre{d_2}{p}$ equals~1, as in (4).
\end{proof}
\begin{remark}
The identity $\chi=2\psi$ determines $\psi\in\widehat C$
up to a quadratic character, as an element of
$\widehat C[4]/\widehat C[2]$, and this means that the
quadratic extension $E\subset F$ it gives rise to in (1) 
is determined by $\chi$ only up to `twisting' by an
unramified quadratic character.
In other words: not $E\subset F$, but
the quadratic extension $H_2\subset H_2F$
it generates over the genus field $H_2$ is unique. 
\hfill$\lozenge$
\end{remark}

\noindent
In order to compute the 8-rank in Theorem \ref{theorem:8-rank}
from the rank of an explicit matrix describing the map $R_8$ in \eqref{eqn:R8},
we \emph{choose} an $\F2$-basis for the $(r_4+1)$-dimensional
subspace $\ker  R_4\subset \FF_2^t$,
and write 
$$
[\gothm_j]\in C[2]\cap 2C\qquad\quad (j=1,2,\ldots, r_4+1)
$$
for the images of these basis vectors under the map $\alpha$ from
\eqref{eqn:alpha}. 
The classes $[\gothm_j]$ span $C[2]\cap 2C$, subject to a single relation
encoded in $A_D$,
and their Artin symbols $\Art (\gothm_j, H/K)$ are the identity
on the genus field $H_2$.

Similarly, we \emph{choose} quartic characters $\psi_i$ for $i=1,2,\ldots, r_4$
spanning $\widehat C[4]/\widehat C[2]$, and let $K\subset F_{4,i}$
be the corresponding unramified quartic extensions.
By condition (2) of Lemma \ref{lemma:F2-criterion}, the
quadratic characters $\chi_i=2\psi_i\in\widehat C[2]$
come from vectors in $\FF_2^t$ that,
together with $(1)_{i=1}^t$, span the kernel 
of the \emph{transpose} $R_4^T$ of the R\'edei matrix in \eqref{eqn:R4}.
The $r_4$ quadratic extensions $H_2\subset H_2F_{4,i}$ span $H_2\subset H_4$,
and the map $R_8$ is represented by a matrix
$R_8=(\eta_{ij})_{i,j}\in \Mat_{r_4\times (r_4+1)}(\F2)$ with entries
\begin{equation}
\label{eqn:R8entries}
\eta_{ij}=
	\psi_i[\gothm_j]=\Art (\gothm_j, H_2F_{4,i}/K)
	\in \Gal(H_2F_{4,i}/H_2)=\F2.
\end{equation}
In cases where we know the kernel of $\alpha$ in \eqref{eqn:alpha},
i.e., the non-trivial relation $A_D$ between the ramified primes of $K$ in $C$,
we can use it to leave out a column of~$R_8$
corresponding to a `superfluous' generator $[\gothm_j]$
of $C[2]\cap 2C$, and work with an $(r_4\times r_4)$-matrix 
to describe $\varphi_8$ in \eqref{eqn:R8}.

A product $\gothm$ of distinct ramified primes of $K$
is characterized by the squarefree divisor $m|D$ arising as its norm,
and the residue class of a quartic character $\psi$
in $\widehat C[4]/\widehat C[2]$
by the invariant field $E=\Que(\sqrt{d_1}, \sqrt{d_2})$ of the
quadratic character $2\psi$ corresponding to a
decomposition $D=d_1d_2$ `of the second kind'.
This leads to a classical notation for the entries 
$\psi([\gothm])$ in \eqref{eqn:R8entries} as \emph{R\'edei symbols}.
\begin{definition}
\label{def:redeisym}
Let $D=d_1d_2$ be a decomposition of the second kind,
$K\subset F$ a corresponding extension as in condition $(1)$
of Lemma $\ref{lemma:F2-criterion}$, and $m|D$ the squarefree norm
of an integral ideal $\gothm$ in $K$ with $[\gothm]\in C[2]\cap 2C$.
Then the R\'edei symbol associated to $d_1$, $d_2$, and $m$ is
the Artin symbol
$$
[d_1, d_2, m]=\Art (\gothm, H_2F/K)\in \Gal(H_2F/H_2)=\F2.
$$
\end{definition}

\noindent
It is convenient to take the value of R\'edei symbols in $\F2$, as 
we do in Definition~\ref{def:redeisym}.
After all, they arise as entries $\eta_{ij}$ of an $\F2$-matrix $R_8$ in 
\eqref{eqn:R8entries}.
However, they describe the Galois action on certain square roots,
just like the entries $\varepsilon_{ij}$ of $R_4$ in \eqref{eqn:R4entries},
so their values are traditionally taken in $\{\pm1\}$.
We have been unable to completely avoid this notational ambiguity, which
already occurs in \eqref{eqn:R4entries}.
Despite our additive definition, our proof of R\'edei reciprocity in 
Section \ref{S:Proving Redei reciprocity} views R\'edei symbols in
\eqref{eqn:prodp-parts} as `products' of local symbols $[a,b,c]_p$,
which are recognized in our key lemma \ref{lemma:keylemma}
as quadratic Hilbert symbols satisfying a well-known global
\emph{product formula} that we did not rename into a sum formula.
On the other hand, our quadratic characters in \eqref{eqn:chipi}
and biquadratic characters in \eqref{eqn:biquadraticchar} 
do take values in~$\F2$.

\section{Computing Redei-symbols}
\label{S:Computing Redei-symbols}

\noindent
Definition \ref{def:redeisym} of the R\'edei symbol $[d_1, d_2, m]$
does not immediately show how to compute it
from $d_1$, $d_2$ and $m$, as it
involves a quadratic extension $F$ of
$\Que(\sqrt{d_1}, \sqrt{d_2})$ that is dihedral over $\Que$.
Galois theory tells us that such $F$ come from rational
points on the conic $x^2-d_1y^2-d_2z^2=0$, and that they are unique
up to twisting by quadratic characters.
This statement does not depend on the base field $\Que$, and 
we formulate it for any field $Q$ of characteristic different from 2.
\begin{lemma}
\label{lemma:biquadratic}
Let $Q$ be of characteristic different from $2$, and 
$Q\subset Q(\sqrt a)$ a quadratic extension.
For $\beta\in Q(\sqrt a)^*$ non-square of norm
$N\beta=b\in Q^*$, let $F$ be the normal closure of 
the quartic extension
$Q(\sqrt a,\sqrt \beta)$ of $Q$. Then
\begin{enumerate}
\item
for $\bar b\notin\{\bar 1, \bar a\}\subset Q^*/{Q^*}^2$, the field $F$
is quadratic over $Q(\sqrt a, \sqrt b)$, cyclic of degree $4$ over
$Q(\sqrt {ab})$, and dihedral of degree $8$ over $Q$;
\item
for $\bar b=\bar a\in Q^*/{Q^*}^2$, the field 
$F$ is quadratic over $Q(\sqrt a)$ and cyclic of degree $4$ over $Q$;
\item
for $\bar b=\bar 1\in Q^*/{Q^*}^2$, the field
$F$ is quadratic over $Q(\sqrt a)$ and non-cyclic abelian of degree $4$
over $Q$.
\end{enumerate}
Conversely, every field $F$ having the properties in $(1)$, $(2)$,
or $(3)$ is obtained in this way for some $\beta\in Q(\sqrt a)$ of norm $b$.
\end{lemma}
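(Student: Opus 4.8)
The plan is to realize the normal closure explicitly as $F=Q(\sqrt a,\sqrt b,\sqrt\beta)$ and then read off the entire tower from the Galois group, which I compute by hand. Write $\sigma$ for the nontrivial element of $\Gal(Q(\sqrt a)/Q)$, so that $\sigma\beta=b/\beta$ and, once the three square roots are chosen compatibly, a square root of $\sigma\beta$ is $\sqrt{\sigma\beta}=\sqrt b/\sqrt\beta$. Since $F$ must contain the conjugate root $\sqrt{\sigma\beta}$, this identity gives $F=Q(\sqrt a,\sqrt\beta,\sqrt{\sigma\beta})=Q(\sqrt a,\sqrt b,\sqrt\beta)$ at once, so everything reduces to locating $\sqrt b$ and $\sqrt\beta$ inside the tower.

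First I would pin down $[F:Q]$ in each case. The element $b$ is a square in $Q(\sqrt a)$ exactly when $\bar b\in\{\bar 1,\bar a\}$, which immediately collapses $F$ to the quartic $Q(\sqrt a,\sqrt\beta)$ in cases (2) and (3). In case (1) the key point is to check $[F:Q]=8$, i.e.\ $\sqrt b\notin Q(\sqrt a,\sqrt\beta)$: were it inside, $b\beta=\beta^2\sigma\beta$ would be a square in $Q(\sqrt a)$, forcing $\sigma\beta$—hence, applying $\sigma$, also $\beta$—to be a square there, against the hypothesis that $\beta$ is a non-square. This is the one genuinely substantive verification, and the only place the non-square hypothesis on $\beta$ is used; it simultaneously shows that $\Gal(F/Q(\sqrt a))$ is generated by the two independent square roots $\sqrt b,\sqrt\beta$ and is therefore Klein four. (Here $Q(\sqrt a,\sqrt b)$ is genuinely biquadratic, as $a,b,ab$ represent distinct nontrivial classes in $Q^*/{Q^*}^2$.)

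Next I would compute the Galois group directly, exhibiting two automorphisms: the lift $\rho$ of $\sigma$ determined by $\rho:\sqrt a\mapsto-\sqrt a,\ \sqrt b\mapsto-\sqrt b,\ \sqrt\beta\mapsto\sqrt b/\sqrt\beta$, and the reflection $\upsilon$ fixing $\sqrt a,\sqrt\beta$ and negating $\sqrt b$; both are well defined because $\{1,\sqrt b,\sqrt\beta,\sqrt b\sqrt\beta\}$ is a $Q(\sqrt a)$-basis and the assignments respect the defining quadratic relations. A short computation gives $\rho^2:\sqrt\beta\mapsto-\sqrt\beta$ (fixing $\sqrt a,\sqrt b$) and $\upsilon\rho\upsilon^{-1}=\rho^{-1}$. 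In case (1) this shows $\rho$ has order $4$, $\upsilon$ has order $2$ and is not a power of $\rho$, so $\Gal(F/Q)=\langle\rho,\upsilon\rangle\cong D_4$; the presence of the Klein four subgroup $\Gal(F/Q(\sqrt a))$ (equivalently, of a second involution $\upsilon\ne\rho^2$) rules out the quaternion group, the only other non-abelian possibility of order $8$. The three claimed intermediate fields are then just fixed fields: $Q(\sqrt a,\sqrt b)=F^{\langle\rho^2\rangle}$, over which $F$ is quadratic, and $Q(\sqrt{ab})=F^{\langle\rho\rangle}$ (after checking $\rho$ fixes $\sqrt{ab}$), over which $F$ is cyclic quartic with group $\langle\rho\rangle$. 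The same lift $\rho$ settles (2) and (3): when $\bar b=\bar a$ one finds $\rho$ of order $4$ on the quartic $Q(\sqrt a,\sqrt\beta)$, giving $C_4$, while when $\bar b=\bar 1$ one has $\rho^2=\mathrm{id}$, and $\rho$ together with the reflection negating $\sqrt\beta$ generates the Klein four group.

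For the converse I would run the Galois correspondence backwards. Given $F$ with the structure in (1), set $L=Q(\sqrt a)$; then $\Gal(F/L)$ is one of the two Klein four subgroups of $D_4$, and choosing a non-central involution $\upsilon$ inside it the fixed field $F^{\langle\upsilon\rangle}$ is a non-normal quartic $L(\sqrt\beta)$ whose normal closure is all of $F$. Writing $\sqrt{N\beta}=\sqrt\beta\cdot\rho(\sqrt\beta)$ and tracking the signs under $\rho$ and $\upsilon$ shows $\sqrt{N\beta}$ lies in the $Q(\sqrt b)$-part, so $N\beta$ equals $b$ up to squares; cases (2) and (3) are the analogous, easier extractions of $\beta$ from the quartic $F$ itself. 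The one point needing care is passing from ``$N\beta\equiv b$ modulo squares'' to an honest element of norm exactly $b$: here I would use Hilbert 90 together with the norm relations forced by the tower to adjust $\beta$ by a suitable square from $L$ (respectively from $Q(\sqrt a,\sqrt b)$) without changing the field it generates. I expect this norm-normalization, rather than the Galois-theoretic skeleton, to be the fussiest part of the argument.
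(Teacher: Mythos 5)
Your forward direction is correct and complete: the identification $F=Q(\sqrt a,\sqrt b,\sqrt\beta)$, the degree-8 verification (via the Kummer-theoretic fact that $\sqrt b\in Q(\sqrt a,\sqrt\beta)$ would force $b$ or $b\beta$ to be a square in $Q(\sqrt a)$), and the explicit presentation of $\Gal(F/Q)$ by $\rho$ and $\upsilon$ are exactly the ``basic Galois theory'' the paper leaves to the reader. The Galois-correspondence skeleton of your converse is also correct up to and including the conclusion that $N\beta$ lies in $b\,{Q^*}^2$.

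The genuine gap is the final ``norm-normalization'' step: it is not merely fussy, it is impossible in general, and Hilbert 90 cannot rescue it. Writing $L=Q(\sqrt a)$ and $N\beta=bc^2$, the elements $\beta'\in L^*$ with $E(\sqrt{\beta'})=F$ are exactly those in $\beta {L^*}^2\cup b\beta {L^*}^2$ (since ${E^*}^2\cap L^*={L^*}^2\cup b{L^*}^2$), so the achievable norms are $bc^2N(u)^2$ and $b^3c^2N(u)^2$ with $u\in L^*$; the value $b$ itself is attained if and only if $c$ or $-c$ is a norm from $L$, which can fail. Concretely, take $Q=\Que$, $a=-1$, $b=2$, $\beta=3+3i$, so $c=3$ and $F=\Que(i,\sqrt2,\sqrt{3+3i}\,)$: by your forward direction this $F$ has all the properties in (1) for $(a,b)=(-1,2)$, yet any $\beta'\in\Que(i)^*$ of norm exactly $2$ has the form $(1+i)\gamma/\sigma\gamma$ by Hilbert 90, whence $E(\sqrt{\beta'})=E(\sqrt{(1+i)N\gamma}\,)$, and equality with $F$ would force $3N\gamma\in {E^*}^2\cap\Que^*=\langle-1,2,{\Que^*}^2\rangle$, i.e.\ a sum of two rational squares of square class $3$ or $6$ --- impossible. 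So the literal ``norm exactly $b$'' assertion is false, and your own counterexample-proof instinct (that this is the delicate point) was right for a reason you did not anticipate: the statement one can prove, and the one the paper actually uses (compare the norms ``$\in b\cdot{Q^*}^2$'' in Corollary \ref{corollary:D4}), is that $F$ arises from some $\beta$ with $N\beta\in b\,{Q^*}^2$. Your proof should simply end at that point rather than attempt an exact normalization.
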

\begin{proof}
Basic Galois theory.
\end{proof}

\begin{corollary}
\label{corollary:D4}
Let $a, b\in Q^*$ and $E=Q(\sqrt a, \sqrt b)$ be as in $(1)$ of
Lemma $\ref{lemma:biquadratic}$. 
Then a quadratic extension $E\subset F$
is cyclic over $Q(\sqrt{ab})$ and dihedral of degree $8$ over $Q$
if and only if there
exists a non-zero solution $(x,y,z)\in Q^3$ to the equation
$$
x^2-a y^2-b z^2=0
$$
such that for $\beta=x+y\sqrt a\in Q(\sqrt a)$
and $\alpha=2(x+z\sqrt b)\in Q(\sqrt b)$
of norm $\beta\beta'\in b\cdot {Q^*}^2$ and $\alpha\alpha'\in a \cdot {Q^*}^2$,
we have 
$$
F=E(\sqrt \beta)=E(\sqrt\alpha).
$$
Given $F=E(\sqrt \beta)$, any other quadratic extension of $E$ that is
dihedral over $Q$ is of the form
$F_t=E(\sqrt{t\beta})$ for some unique $t\in Q^*/\langle a, b, {Q^*}^2\rangle$.
\end{corollary}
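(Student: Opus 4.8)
The plan is to read this corollary as a concrete, ``conic'' repackaging of Lemma~\ref{lemma:biquadratic}(1) together with its converse. The content of that lemma is that the dihedral octic fields $F$ we want correspond to nonsquare elements $\beta\in Q(\sqrt a)^*$ of norm in the class $\bar b$; so the whole task is to translate the statement ``$b$ is a norm from $Q(\sqrt a)$'' into ``the conic $x^2-ay^2-bz^2=0$ has a nonzero $Q$-point'', to pin down the explicit generators $\beta$ and $\alpha$, and to count the resulting extensions. I would carry this out in four steps: set up the dictionary between conic points and norm-$b$ elements; invoke Lemma~\ref{lemma:biquadratic} to produce $F=E(\sqrt\beta)$ and, via its converse, to recover every such $F$; prove $F=E(\sqrt\alpha)$ by an explicit square identity in $E$; and finally parametrize all the $F$ by Hilbert~$90$.

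For the dictionary, note first that any nonzero solution $(x,y,z)$ has $y\ne0$ and $z\ne0$: if $z=0$ then $a=(x/y)^2$ is a square, and if $y=0$ then $b=(x/z)^2$ is a square, both excluded by $\bar a,\bar b\ne\bar1$. Setting $\beta=x+y\sqrt a$, the equation reads $N\beta=\beta\beta'=x^2-ay^2=bz^2\in b\cdot{Q^*}^2$, so $\beta$ is a nonsquare (its norm is a nonsquare in $Q$) of norm in the class $\bar b$, i.e.\ case $(1)$ of the lemma applies. Conversely a $\beta=x+y\sqrt a$ of norm exactly $b$ gives the point $(x,y,1)$. The lemma then makes the normal closure of $Q(\sqrt a,\sqrt\beta)$ dihedral of degree $8$ over $Q$ and cyclic over $Q(\sqrt{ab})$; to identify it with $E(\sqrt\beta)$ I would observe that $\sqrt\beta\,\sqrt{\beta'}=\sqrt{N\beta}=z\sqrt b$, so once the conjugate $\sqrt{\beta'}$ is adjoined (it lies in the normal closure) we get $\sqrt b\in E(\sqrt\beta)$, forcing $F=E(\sqrt\beta)$ by a degree count. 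The converse half of Lemma~\ref{lemma:biquadratic} produces from any dihedral octic $F\supset E$ cyclic over $Q(\sqrt{ab})$ a $\beta$ of norm $b$, hence a point $(x,y,1)$, so the stated equivalence holds.

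The identity $F=E(\sqrt\alpha)$ with $\alpha=2(x+z\sqrt b)$ rests on
\[
\bigl(x+y\sqrt a+z\sqrt b\bigr)^2=\alpha\beta .
\]
Expanding the left side gives rational part $x^2+ay^2+bz^2$ and off-diagonal terms $2xy\sqrt a+2xz\sqrt b+2yz\sqrt{ab}$; the conic relation $x^2=ay^2+bz^2$ collapses the rational part to $2x^2$, which matches $\alpha\beta=2x^2+2xy\sqrt a+2xz\sqrt b+2yz\sqrt{ab}$ exactly. (This is precisely what the factor $2$ in $\alpha$ is for: without it one would only get $\alpha\beta=\theta^2/2$.) Writing $\theta=x+y\sqrt a+z\sqrt b\in E$, we obtain $\sqrt\alpha\,\sqrt\beta=\theta\in E$, whence $E(\sqrt\alpha)=E(\sqrt\beta)=F$; the norm $\alpha\alpha'=4(x^2-bz^2)=4ay^2\in a\cdot{Q^*}^2$ falls out along the way.

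Finally, given a second admissible $\beta'$ (yielding some $F'=E(\sqrt{\beta'})$), both norms lie in $b\cdot{Q^*}^2$, so $N(\beta'/\beta)=c^2$ for some $c\in Q^*$. Applying Hilbert~$90$ to the norm-one element $\beta'/(c\beta)$ writes $\beta'/\beta=t\gamma^2$ with $t\in Q^*$ and $\gamma\in Q(\sqrt a)^*$, so $F'=E(\sqrt{\beta'})=E(\sqrt{t\beta})=F_t$. And $F_t=F_{t'}$ iff $t/t'\in{E^*}^2$; for $t/t'\in Q^*$ this means $Q(\sqrt{t/t'})\subseteq Q(\sqrt a,\sqrt b)$, i.e.\ $t/t'\in\langle a,b,{Q^*}^2\rangle$, giving the asserted uniqueness of $t$ in $Q^*/\langle a,b,{Q^*}^2\rangle$. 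I expect the main obstacle to be twofold: getting the explicit constants exactly right so that $\sqrt\alpha\,\sqrt\beta$ lands in $E$ on the nose (the factor $2$ is essential), and ensuring the Hilbert~$90$ step yields a genuine scalar $t\in Q^*$ rather than a stray element of $Q(\sqrt a)^*$; the Kummer-theoretic identification of $Q^*\cap{E^*}^2$ with $\langle a,b,{Q^*}^2\rangle$ is routine.
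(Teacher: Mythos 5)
Your proof is correct, and its skeleton is the paper's: Lemma~\ref{lemma:biquadratic} supplies the dictionary between dihedral octic fields and nonsquare $\beta\in Q(\sqrt a)^*$ of norm in $b\cdot{Q^*}^2$, an explicit square identity gives $E(\sqrt\beta)=E(\sqrt\alpha)$, and Kummer theory pins down the twisting group. The two sub-steps are packaged differently, though. For $E(\sqrt\beta)=E(\sqrt\alpha)$ the paper computes inside $F$ itself, via $\bigl(\sqrt\beta+\sqrt{\beta'}\bigr)^2=2(x+z\sqrt b)=\alpha$, while you verify $\theta^2=\alpha\beta$ with $\theta=x+y\sqrt a+z\sqrt b\in E$; these are the same computation (multiply the paper's identity by $\beta$ and note $\sqrt\beta\,(\sqrt\beta+\sqrt{\beta'})=\beta+z\sqrt b=\theta$), and your version has the mild advantage of staying inside $E$ and avoiding any choice of square-root signs. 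For the parametrization of twists, the paper applies case (3) of Lemma~\ref{lemma:biquadratic} to the ratio $\beta/\beta_0$ (whose norm is a square), concluding that $Q(\sqrt a,\sqrt{\beta/\beta_0})=Q(\sqrt a,\sqrt t)$ and reading off $F_t=E(\sqrt{t\beta})=E(\sqrt{\beta_0})$; you instead run Hilbert~90 on the norm-one element $\beta_0/(c\beta)$ to produce $\beta_0/\beta=t\gamma^2$ with $t\in Q^*$ directly. These come to the same thing---case (3) is itself a Kummer/Hilbert-90 fact---but your route makes the rational scalar $t$ appear explicitly rather than through the classification of abelian quartic fields, and it treats the case $F_0=F$ uniformly, where the paper tacitly assumes $\beta/\beta_0$ is a nonsquare. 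The final identification $Q^*\cap{E^*}^2=\langle a,b,{Q^*}^2\rangle$ and the resulting uniqueness of $t$ are identical in both arguments.
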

\begin{proof}
The first statement follows if we write $\beta=x+y\sqrt a\in Q(\sqrt a)$ 
in the dihedral case (1) of Lemma \ref{lemma:biquadratic}, and
observe that $F=Q(\sqrt a, \sqrt \beta, \sqrt{\beta'})=E(\sqrt \beta)$
is the normal closure over $Q$ of
$Q(\sqrt a,\sqrt \beta)$, but also of $Q(\sqrt b,\sqrt\alpha)$:
it contains a square root of the non-square element
\begin{equation}
\label{eqn:absymm}
(\sqrt\beta+\sqrt{\beta'})^2=
\left(\sqrt{x+y\sqrt a}+\sqrt{x-y\sqrt a}\right)^2=
	2(x+ z\sqrt{b})=\alpha \in \Que(\sqrt{b})^*.
\end{equation}
The dihedral group $D_4$ of order 8 has center $Z(D_4)=\FF_2$ with
quotient $D_4/Z(D_4)=\FF_2\times\FF_2$, and 
extending $Q\subset Q(\sqrt a, \sqrt b)$ 
to a $D_4$-extension amounts to lifting the surjection
$G_Q\to\FF_2\times\FF_2=D_4/Z(D_4)$ on the absolute Galois group of $Q$
corresponding to $Q(\sqrt a, \sqrt b)$ to a homomorphism $f:G_Q\to D_4$.
Given $f$ corresponding to $F=E(\sqrt \beta)$, any other lift is of
the form $f_t=\chi_t f$ for some quadratic character
$\chi_t: G_Q\to Z(D_4)=\FF_2$ corresponding to $Q\subset Q(\sqrt t)$.
The extension $F_t$ corresponding to $f_t$ is
the \emph{quadratic twist} $F_t=E(\sqrt{t\beta})$ of $F$,
and $t\in Q^*$ yielding $F_t$ is unique up to multiplication by elements of
${E^*}^2\cap Q^*=\langle a, b, {Q^*}^2\rangle$.
\end{proof}
\begin{corollary}
\label{corollary:C4}
For $E=Q(\sqrt a)$ as in Lemma $\ref{lemma:biquadratic}$, a
quadratic extension $E\subset F$ is cyclic over
$Q$ if and only if there
exists a non-zero solution $(x,y,z)\in Q^3$ to 
$$
x^2-a y^2-a z^2=0
$$
such that we have $F=E(\sqrt\alpha)$ for $\alpha=x+y\sqrt a\in Q(\sqrt a)$
of norm $\alpha\alpha'\in a\cdot {Q^*}^2$.
Given one $F=E(\sqrt \alpha)$, any other such extension is of the form
$F_t=E(\sqrt{t\alpha})$ for some unique $t\in Q^*/\langle a, {Q^*}^2\rangle$.
\end{corollary}
\begin{proof}
Analogous to the dihedral case.
\end{proof}
\begin{remark}
The dihedral group $D_4$ of order 8 can be viewed
as the Heisenberg group $U_3(\F2)$ of upper triangular
$3\times 3$-matrices with coefficients in $\F2$,
and extending an extension $Q\subset Q(\sqrt a, \sqrt b)$ to
a $D_4$-extension amounts to an embedding problem that can be
treated in terms of Massey symbols \cite{Minac}.
For our purposes, the basic Galois theory of Lemma \ref{lemma:biquadratic}
and its corollaries are already sufficient.
\hfill$\lozenge$
\end{remark}
\noindent
In order to construct an unramified extension $K\subset F$
containing $E=\Que(\sqrt{d_1}, \sqrt{d_2})$ for $D=d_1d_2$
satisfying the conditions of Lemma \ref{lemma:F2-criterion},
we apply Corollary \ref{corollary:D4} for $Q=\Que$ and $(a,b)=(d_1, d_2)$.
It shows that $F$ can be explicitly generated as 
\begin{equation}
\label{eqn:F4}
F=F(x,y,z)=E(\sqrt {\delta_2})=E(\sqrt {\delta_1}),
\end{equation}
for elements
$
\delta_2=x+y\sqrt{d_1}\in \Que(\sqrt{d_1})^*
$
and 
$
\delta_1=2(x+y\sqrt{d_1})\in \Que(\sqrt{d_1})^*
$
coming from a solution $(x,y,z)$ to the equation
\begin{equation}
\label{eqn:normd1d2}
x^2-d_1y^2-d_2 z^2=0.
\end{equation}
By Corollary \ref{corollary:D4},
scaling \emph{any} non-zero solution $(x,y,z)$ with an
appropriate element $t\in\Que^*$, which amounts to replacing 
$F(x,y,z)$ by the \emph{quadratic twist} $F(tx,ty,tz)$,
will make $K\subset F$ unramified.
As we will show in a slightly more general setting in 
Corollary \ref{cor:F4unr}, every \emph{primitive}
integral solution $(x,y,z)$ to \eqref{eqn:normd1d2} yields
an extension $K\subset F(x,y,z)$ that is unramified at all odd primes.
Ramification over 2 can be avoided by twisting the extension
with a suitable choice of $t\in \{\pm1, \pm2\}$.

\begin{example}
\label{example:820}
Take $K=\Que(\sqrt{-205})$ of discriminant $D=-4\cdot5\cdot41=-820$,
which has $t=3$ and $r_2=2$. 
The columns of the R\'edei matrix
$$
R_4=
\begin{pmatrix}1 & 0 & 0\\ 1 & 0 & 0 \\ 0 & 0 &0\end{pmatrix}
$$
describe the action of the Artin symbols of the 
three ramified primes $\gothp_2$, $\gothp_5$, and~$\gothp_{41}$
dividing $D$ on the square roots of $-4$, 5 and 41 generating
$H_2=\Que(i, \sqrt 5, \sqrt {41})$ as in \eqref{eqn:R4entries}.
From the matrix $R_4$ we read off that $r_4$ equals
$r_2-\rank(R_4)=1$, that $[\gothp_5]$ and $[\gothp_{41}]$ span $C[2]\cap 2C$,
and that $D=-20\cdot41$ is the unique decomposition of the second kind.
The equation
$$
x^2+20y^2-41z^2=0
$$
has a primitive solution (12,1,2) for which the element
$\delta=12+2\sqrt{-5}$ of norm $2^2\cdot 41$ is `primitive outside 2' and
satisfies $\delta\congr(1+\sqrt{-5})^2\mod 4$. 
This shows that $\delta=2(6+\sqrt{-5})$ has
an unramified square root over $E=\Que(\sqrt{-5}, \sqrt{41})$,
whereas the primitive elements $\pm6+\sqrt{-5}$ yield
extensions $E\subset E(\sqrt{\pm\delta/2})$ that are ramified over~$2$.

The solution $(17,2,3)$ defining the primitive element
$\delta_0=17+4\sqrt{-5}$ of norm $3^2\cdot 41$ satisfying
$\delta_0\congr1\mod4$ also has an unramified square root over $E$.
We have
$$
\delta\delta_0
	= 164+82\sqrt{-5}
	= -[\sqrt{41}(1-\sqrt{-5})]^2\in -1\cdot {E^*}^2,
$$
and $E(\sqrt{\delta_0})=E(\sqrt{t\delta})$ for $t=-1$.
Over $H_2$, both $\sqrt{\delta_0}$ and $\sqrt{\delta}$ generate
$$
H_4=H_2(\sqrt{\delta_0})=H_2(\sqrt{\delta}).
$$
As we know that $(\sqrt{-5\cdot 41})=\gothp_5\gothp_{41}$ is trivial in $C$,
the class of either $\gothp_5$ or $\gothp_{41}$ generates $C[2]\cap 2C$.
The matrix $R_8$ consists of a single R\'edei symbol
$$
[-20, 41, 5]=[-20,41, 41]
$$
describing whether the prime $\gothp_5$ (or, equivalently, $\gothp_{41}$)
of $K$ splits completely in~$H_4$.
It does not, as $\delta=12+2\sqrt{-5}$ (like $\delta_0=17+4\sqrt{-5})$)
is congruent to the quadratic non-residue 2 modulo every prime over 5 in $H_2$.
We conclude that we have $r_8=0$, and that the 2-part of $C$ is isomorphic
to $\Zee/2\Zee\times \Zee/4\Zee$.

In this case, the decomposition $D=d_1d_2=-4\cdot 205$ is not
of the second kind, but the conic
$$
x^2+4y^2-205z^2=0
$$
defined by \eqref{eqn:normd1d2} does have infinitely many
rational points $(x,y,z)$, such as $(3,7,1)$.
None of them defines an unramified quartic extension $K\subset F(x,y,z)$.
\hfill$\lozenge$
\end{example}

\noindent
Example \ref{example:820} shows that
non-trivial solvability of \eqref{eqn:normd1d2} over~$\Que$
may not guarantee the existence of unramified extensions $K\subset F(x,y,z)$,
whereas the slightly stronger conditions of Lemma \ref{lemma:F2-criterion} do.
More precisely, by the classical \emph{local-global principle\/} for conics,
assuming solvability of \eqref{eqn:normd1d2}
amounts to having quadratic Hilbert symbols $(d_1,d_2)_p=1$
for all finite primes $p$, with $(d_1,d_2)_\infty=1$ then being implied by
the product formula.
At $p\nmid D=d_1d_2$, including $p=2$, we have $(d_1,d_2)_p=1$.
For an \emph{odd} prime $p$ dividing $D=d_1d_2$, say $d_1$, 
the Hilbert symbol condition at $p$ is
$$
(d_1,d_2)_p=\Legendre{d_2}{p}=1,
$$
exactly as in condition (4) of Lemma \ref{lemma:F2-criterion}.
For $p=2$ dividing $d_1$ however, we have $d_2\congr1\mod4$ and obtain
\begin{equation}
\label{eqn:-4*5}
(d_1,d_2)_2=\Legendre{d_2}{2}^{\ord_2(d_1)}=1.
\end{equation}
If $D$ has 2-part $t_D=-4$ in \eqref{eqn:Dfactorization}
and we have $d_2\congr 5\mod 8$, such as for $D=-820$ and $d_2=205$ above,
condition \eqref{eqn:-4*5} does \emph{not} imply condition (4) of 
Lemma \ref{lemma:F2-criterion} for $p=2$, as we have 
$(d_1,d_2)_2=1$ but $\Legendre{d_2}{2}=-1$.
This discrepancy will lead us to the notion of 2-minimal ramification 
in Definition \ref{def:2minram}.

\section{Discovering Redei reciprocity}
\label{S:Discovering Redei reciprocity}

\noindent
Explicit computations of R\'edei symbols exhibit a `reciprocity law'
that we can discover already by looking at the most classical
example 
$$
K=\Que(\sqrt {-p}),
$$
with $p$ an odd prime.
Having $r_2=t-1=1$ for $K$ amounts to having $p\congr 1\mod 4$ and $D=-4p$
by Theorem \ref{theorem:2-rank}; otherwise $C$ has odd order.
Assuming this, $R_4$ in Theorem \ref{theorem:4-rank}
has rank 0 if and only if we have $\legendre{p}{2}=\legendre{2}{p}=1$,
so $r_4=1$ happens for $p\congr 1\mod 8$.
We further assume $p\congr 1\mod 8$.

As $(\sqrt{-p})|p$ is principal,
the class of the non-principal prime $\gothp_2|2$ generates $C[2]=C[2]\cap 2C$,
and $R_8$ consists of a single R\'edei symbol $[-4,p,2]$.
The case $r_8=1$ in which the symbol vanishes
occurs when the prime $\gothp_2$ of $K$
splits completely in the 4-Hilbert class field $H_4(-p)$ of $K$. 
Solving
$$
x^2+4y^2-pz^2=0
$$
with $z=1$, we can generate $H_4(-p)$ over
$E=\Que(i, \sqrt p)$ by adjoining a square root of $\pi=x+2iy$.
Now $\gothp_2$ splits into 4 primes in the extension $K\subset H_4(-p)$ if and 
only if the prime $(1+i)$ over 2 in $\Que(i)$ splits into 4 primes
in the extension 
$$
\Que(i)\subset H_4(-p)=\Que(\sqrt{-p}, i, \sqrt\pi)=\Que(i, \sqrt \pi,\sqrt{\bar\pi}).
$$
This shows that $[-4,p,2]$ can be viewed as
a `Kronecker symbol' $\legendre{\pi}{1+i}$ in $\Que(i)$.
By class field theory (or quadratic reciprocity) over $\Que(i)$,
this symbol is simply the Legendre symbol
$\legendre{1+i}{p}$, which is well defined for $p\congr 1\mod 8$,
and we have 
\begin{equation}
r_8=1\quad \Longleftrightarrow\quad
\text{$p$ splits completely in $\Que(\zeta_8,\sqrt{1+i})$.}
\end{equation}
We deduce that the prime $\gothp_2$ over 2 splits completely in
the unramified extension $K\subset H_4(-p)$
if and only if $p$ splits completely in $\Que(\zeta_8,\sqrt{1+i})$.
By the case $(a,b)=(-1,2)$ of Lemma \ref{lemma:biquadratic}, the field
$\Que(\zeta_8,\sqrt{1+i})= \Que(i, \sqrt 2,\sqrt{1+i})$
is dihedral over $\Que$, just like $H_4(-p)$.
In fact, both fields are abelian of exponent 2 over $\Que(i)$,
quadratic over
respectively $\Que(i,\sqrt 2)$ and $\Que(i, \sqrt p)$, and cyclic over
respectively $\Que(\sqrt {-2})$ and $\Que(\sqrt{-p})$.
We have proved a special case of \emph{R\'edei reciprocity}, and in terms of 
R\'edei symbols it can be suggestively formulated as
\begin{equation}
[-4,p,2]=[-4,2,p].
\end{equation}
The symbol on the left is defined by Definition \ref{def:redeisym}
for symbols $[d_1,d_2,m]$, but the symbol on the right is not.
It is natural to take $d_1$ and $d_2$ up to squares, yielding
the formulation $[-1,p,2]=[-1,2,p]$,
but the symbol $[-1,2,p]$ refers to the splitting of the primes over $p$
in 
$$
\Que(\sqrt{-2})\subset E=\Que(i,\sqrt 2)\subset F=E(\sqrt{1+i}),
$$
a cyclic quartic extension that is totally ramified over 2.
Primes $p\congr 1 \mod 8$ are totally split in $E$, and
split or inert in $E\subset F$ depending on the value of $[-1,2,p]$.

As we can swap the arguments $-1$ and $p$ in the symbol $[-1,p,2]$ by 
its very definition, and $2$ and $p$ by what we just
proved, one naturally wonders whether it also equals
a symbol $[2,p,-1]$ that describes the splitting of ``$-1$"
in the narrow 4-Hilbert class field $H_4(2p)$ of $\Que(\sqrt{2p})$.
By Theorem \ref{theorem:4-rank}, the field $H_4(2p)$
is quadratic over the totally real field $\Que(\sqrt 2,\sqrt p)$
for $p\congr1\mod 8$.
Now Frobenius symbols at ``$-1$", which over $\Que$ raise roots of
unity to the power $-1$, arise in class field theory as complex conjugations,
and act trivially on totally real fields.
The dihedral field $H_4(2p)$ is abelian 
of exponent 2 over $\Que(\sqrt 2)$, and
it is totally real if and only if its conductor over $\Que(\sqrt 2)$
is $p$, not $p\cdot\infty$.
Looking at the ray class group
$$
(\Zee[\sqrt2]/p\Zee[\sqrt2])^*/\langle -1, 1+\sqrt 2\rangle
$$
modulo $p$ of $\Que(\sqrt 2)$, we see
that $H_4(2p)$ is real exactly when the 
fundamental unit $1+\sqrt 2\in \Que(\sqrt 2)$ is a square modulo $p$,
and this happens for the primes that split completely in
the dihedral field $\Que(\zeta_8, \sqrt{1+\sqrt 2})$.
By equation \eqref{eqn:absymm} for $(a,b)=(-1,2)$ and $(x,y,z)=(1,1,1)$,
this is the \emph{same} field as $\Que(\zeta_8,\sqrt{1+i})$, so
the R\'edei symbol 
\begin{equation}
\label{eqn:12p}
[-1,p,2]=[-1,2,p]=[2,p,-1],
\end{equation}
when properly defined, is invariant under
\emph{all permutations} of its arguments.

In R\'edei's own definition, $[-1,2,p]$ does not exist, and 
$[p,2,-1]$ is trivial for all~$p$. 
Our definition in the next section introduces a notion of 
\emph{minimal ramification} for extensions $K\subset F$ as
in \eqref{eqn:F4}, correcting the definition found in \cite{Corsman}.

\section{Redei symbols}
\label{S:Redei symbols}

\noindent
In order to obtain R\'edei reciprocity,
we generalize the symbol $[d_1, d_2, m]$
in Definition \ref{def:redeisym} beyond the setting
of dihedral fields $F$ containing $\Que(\sqrt{d_1},\sqrt{d_2})$
that are cyclic and unramified over $K=\Que(\sqrt{d_1d_2})$ and
norms $m$ of ambiguous ideals $\gothm$ of $K$ with trivial
Artin symbol in the genus field of $K$.
As $d_1$ and $d_2$ encode quadratic fields, and $m$ is the norm of
an ideal with ideal class in a subgroup of $C$ of exponent~2,
the general R\'edei symbol $[a,b,c]$
naturally takes its arguments in 
the group $\Que^*/{\Que^*}^2$.
It will be linear in each of its arguments.

Every $\bar a\in\Que^*/{\Que^*}^2$ is uniquely represented by a 
squarefree integer $a$, and corresponds to a number field
$\Que(\sqrt a)$ that is quadratic for $a\ne 1$.
Given non-trivial elements in $\Que^*/{\Que^*}^2$
represented by squarefree integers $a$, $b$, the extension
\begin{equation}
\label{eqn:KsubsetE}
\Que(\sqrt{ab})=K\subset E=\Que(\sqrt a, \sqrt b)
\end{equation}
is quadratic, and $\Que\subset E$ is unramified at primes outside
the discriminants $\Delta(a)$ and $\Delta(b)$ of the quadratic fields
corresponding to $a$ and $b$. 

We now assume that $a$ and $b$ have relative quadratic Hilbert symbols
$(a,b)_p=1$ for all primes $p$.
As we observed at the end of Section \ref{S:Computing Redei-symbols},
this amounts to saying that the equation
\begin{equation}
\label{eqn:normab}
x^2-a y^2-b z^2=0
\end{equation}
admits non-zero rational solutions.
By Corollaries \ref{corollary:D4} and \ref{corollary:C4},
such a solution $(x,y,z)$ generates a cyclic quartic extension
\begin{equation}
\label{eqn:Fxyz}
\Que(\sqrt{ab})=K\subset F=F(x,y,z)=E(\sqrt \beta)=E(\sqrt\alpha)
\end{equation}
in which we take $\beta=x+y\sqrt a$ and $\alpha=2(x+z\sqrt b)$.
The field $F$ is dihedral over~$\Que$ for $a\ne b$, and cyclic over
$\Que(\sqrt{ab})=\Que$ for $a=b$.
It is uniquely determined by $a$ and $b$ up to
twisting by rational quadratic characters $\chi_t$, with
$t\in\Que^*/{\Que^*}^2$.
In fact, the asymmetry in \eqref{eqn:Fxyz} in the definition of
$\alpha$ and $\beta$ coming out of \eqref{eqn:normab}
can be seen as making a somewhat arbitrary choice between $F$ and $F_2$.
Here we use the twisting notation
\begin{equation}
\label{eqn:twist}
F_t=F(tx,ty,tz)=E(\sqrt {t\beta})=E(\sqrt{t\alpha})
\end{equation}
for $F=F(x,y,z)$ from Section \ref{S:Computing Redei-symbols}.

Before defining the general symbol $[a,b,c]$, we start
with the special case of a squarefree integer $a=b\ne1$.
Then $K\subset E$ in \eqref{eqn:KsubsetE} is the extension
$\Que\subset \Que(\sqrt a)$.
As Hilbert symbols satisfy $(a,-a)_p=1$ for all~$a\in\Que^*$,
our assumption on $a$ is $(a,-1)_p=1$ for all $p$.
By~\eqref{eqn:-1norm}, this means that the discriminant 
$\Delta(a)\in\{a,4a\}$ is positive and without
prime divisors $3\mod4$, i.e.,
in the set $\calD$ of Conjecture \ref{conj:negpell}.
For such $a$, we can write the associated Dirichlet character 
$\chi_a:G_\Que\to\FF_2$
in the notation of \eqref{eqn:chiD} as 
$$
\chi_a=\sum_{p|a} \chi_p,
$$
with $\chi_2=\chi_8$ the quadratic character
associated to $\Que(\sqrt 2)$.
Let $\psi_p$ be a character of order 4 modulo $p\congr1\mod 4$,
and $\psi_2$ a character of order 4 on~$(\Zee/16\Zee)^*$.
Then
\begin{equation}
\label{eqn:psia}
\psi_a=\sum_{p|a} \psi_p
\end{equation}
is a quartic Dirichlet character of conductor $a$ (or $8a$ when $a$ is even)
that satisfies $2\psi_a=\chi_a$.
It corresponds to a cyclic quartic field
$F_{a,a}$ containing $E=\Que(\sqrt a)$. 
Cyclic quartic $\Que\subset F$ containing $E$
are unique up to quadratic twists, as $2\psi_F=\chi_a$ 
only defines $\psi_F$ up to a quadratic character.
Clearly $E\subset F$ will be ramified at all primes dividing $a$,
but not at other primes if we take $\psi_F=\psi_a$ as in \eqref{eqn:psia}.
Such an extension $\Que\subset F_{a,a}$ of \emph{minimal ramification}
is not unique, as we can add $\chi_p$ to
each $\psi_p$ of odd conductor $p$ in \eqref{eqn:psia}, and
$\chi_2$ and $\chi_{-1}$ to $\psi_2$.
This makes $\psi_a$ unique up to twisting by sums
of characters $\chi_p$ with $p|a$, and in addition the character
$\chi_{-1}$ in case $a$ is even.
For $a=1$ we define $\psi_a$ to be the trivial character.

In terms of Corollary \ref{corollary:C4}, a
minimally ramified cyclic quartic extension $\Que\subset F_{a,a}$
containing $E=\Que(\sqrt a)$ is unique up to twisting by $t$
in the finite \emph{twisting subgroup}
\begin{equation}
\label{eqn:Taa}
T_{a,a}\subset \Que^*/{\Que^*}^2
\end{equation}
generated by the residue classes of the odd primes $p=p^*$
dividing $a$ and, for $a$ even, of $-1$ and 2.
It follows that for squarefree integers $a$ and $c$ satisfying
\begin{equation}
\label{eqn:aachyp}
\gcd(\Delta(a),\Delta(c))=1
\quad\hbox{and}\quad
\hbox{$(a,a)_p=(a,c)_p=1$ for all $p$,}
\end{equation}
we have $\chi_t(c)=0$ for $t\in T_{a,a}$, and
a well defined \emph{biquadratic character}
\begin{equation}
\label{eqn:biquadraticchar}
\psi_a(c)=\Legendre{c}{a}_4\in \FF_2.
\end{equation}
In terms of Artin symbols in the cyclotomic field $\Que(\zeta_a)$
(or $\Que(\zeta_{8a})$) containing $F_{a,a}$, we have
\begin{equation}
\label{eqn:psiac}
\psi_a(c)= \Art(c, F_{a,a}/\Que)\in\Gal(F_{a,a}/E)=\FF_2.
\end{equation}
Note that $a$ and $\Delta(a)\in\calD$ have the same prime factors
by the hypothesis $(a,a)_p=1$ for all $p$.
Thus, \eqref{eqn:aachyp} implies that for $a$ even,
with $-1, 2\in T_{a,a}$, we have 
$c\congr 1\mod 4$ by the gcd-condition, and then $c\congr 1\mod 8$
by the condition $(a,c)_2=1$, yielding $\chi_2(c)=\chi_{-1}(c)=0$.
For odd primes $p$ dividing~$a$, with $p\in T_{a,a}$, 
the conditions $(a,c)_p=1$ and $\chi_p(c)=0$ coincide.

Viewing $\psi_a$ in \eqref{eqn:biquadraticchar} as defined
on the subgroup of $\Que^*/{\Que^*}^2$ generated by the integers $c$
satisfying \eqref{eqn:aachyp}, we obtain the following special 
R\'edei symbol.
\begin{definition}
\label{def:aac}
For $a, c \in \Que^*/{\Que^*}^2$ satisfying \eqref{eqn:aachyp},
we take \eqref{eqn:biquadraticchar} to define
$$
[a,a,c]=\psi_a(c)=\Legendre{c}{a}_4\in \FF_2.
$$
\end{definition}

\noindent
For the general case, we
take $a, b\in\Zee_{\ne1}$ to be different squarefree integers,
assuming 
\begin{equation}
\label{eqn:abp=1}
(a,b)_p=1\quad\hbox{for all primes $p$}
\end{equation}
in order to have non-trivial solvability of \eqref{eqn:normab}.
Then the quadratic extension
$$
\Que(\sqrt{ab})=K\subset E=\Que(\sqrt a, \sqrt b)
$$
from \eqref{eqn:KsubsetE}
is only ramified at primes $\gothp | \gcd(\Delta(a),\Delta(b))$,
and such $\gothp$ will be totally ramified in every cyclic quartic extension 
$K\subset F$ in \eqref{eqn:Fxyz}.
Generalizing \eqref{eqn:psiac}, we are to define the R\'edei symbol 
in Definition \ref{def:redeisymbol} as
$$
[a,b,c]=\Art_c (F_{a,b}/K)\in \Gal(F_{a,b}/E)=\FF_2,
$$
the Artin symbol of an ideal $\gothc$ in $K$ corresponding to $c$
in a cyclic quartic extension $K\subset F_{a,b}$
constructed from a solution to \eqref{eqn:normab} that is
\emph{minimally ramified} over $E$.

For \emph{odd} primes $p\nmid \gcd(\Delta(a),\Delta(b))$,
one can avoid ramification over $p$ in $E\subset F$
by passing, if needed,
to the quadratic twist $F_{p^*}$ from \eqref{eqn:twist}, with
$p^*=\pm p$ as in \eqref{eqn:Dfactorization}.

\begin{proposition}
\label{prop:F4unramatp}
Let $a,b\in\Zee_{\ne1}$ be distinct and squarefree, $p\nmid\Delta(b)$
an odd prime, and
$K=\Que(\sqrt{ab})\subset F=E(\sqrt\beta)$ as in \eqref{eqn:Fxyz}.
\begin{enumerate}
\item
If $p$ divides $\Delta(a)$, then $K\subset F$ is unramified over $p$.
\item
If $p$ does not divide $\Delta(a)$, then exactly one of 
$K\subset F$ and $K\subset F_{p^*}$ is unramified over $p$.
\end{enumerate}
\end{proposition}
\begin{proof}
Consider $F$ as a quartic extension of $K_a=\Que(\sqrt a)$.
The intermediate field $E=K(\sqrt b)=K_a(\sqrt b)$
is a quadratic extension of both $K$ and $K_a$
that is unramified at primes dividing $p$, as we have $p\nmid \Delta(b)$.
It follows that $K\subset F$ is unramified over~$p$ if and only if
$K_a\subset F$ is.

Write $F=K_a(\sqrt \beta, \sqrt{\beta'})$, with $\beta\in K_a$ of norm
$\beta\beta'=x^2-ay^2=bz^2$.
As $p$ is odd, $K_a\subset F$ is unramified over $p$ if and only if
$\beta$ and $\beta'$ have even valuation at the primes $\gothp|p$ in $K_a$.
For a prime $\gothp|p$ of ramification index $e_{\gothp/p}$ in $K_a$, we have
$$
\ord_\gothp(\beta\beta')=e_{\gothp/p}\ord_p(bz^2)=2e_{\gothp/p}\ord_p(z).
$$
In the ramified case $p|\Delta(a)$, we have $e_{\gothp/p}=2$ and
$\ord_\gothp(\beta\beta')=2\ord_\gothp(\beta)\congr 0\mod 4$, proving (1).
In the unramified case $p\nmid\Delta(a)$, we have $e_{\gothp/p}=1$
and $\ord_\gothp(\beta)\congr \ord_\gothp(\beta')\mod 2$.
Moreover, we have
$\ord_\gothp(p^*\beta)=\ord_\gothp(\beta)+1$, so $\gothp$
is unramified in exactly one of $F$ and $F_{p^*}$, proving (2).
\end{proof}

\noindent
As twisting by $p^*$ does not change ramification outside $p$,
Proposition \ref{prop:F4unramatp} shows that $K\subset F$ in \eqref{eqn:Fxyz}
can be chosen to be unramified at all odd $p\nmid \gcd(\Delta(a),\Delta(b))$.
For $p=2$ we can twist by $-1$ and $2$, with the following outcome.
\begin{proposition}
\label{prop:F4unramat2}
Let $a,b\in\Zee_{\ne1}$ be distinct and squarefree,
$b=\Delta(b)\congr1\mod 4$, and
$K=\Que(\sqrt{ab})\subset F=E(\sqrt \beta)$ as in \eqref{eqn:Fxyz}.
\begin{enumerate}
\item
If $\Delta(a)$ is odd, then $\Que\subset F_t$ is unramified at $2$
for a unique $t\in\{\pm1,\pm2\}$.
\item
If $\Delta(a)$ is even and $\Delta(b)$ is $1\mod 8$, then $K\subset F_t$
is unramified over $2$ for exactly two values of $t\in\{\pm1,\pm2\}$.
\item
If $\Delta(a)$ is even and $\Delta(b)$ is $5\mod 8$, then $K\subset F_t$
is ramified over $2$ for all $t\in\Que^*$, and $\Delta(a)$ is $4\mod 8$.
\end{enumerate}
\end{proposition}
\begin{proof}
Just as for odd $p$,
we consider the extension $K_a\subset F=K_a(\sqrt \beta, \sqrt{\beta'})$.
As before, up to squares, $\beta$ is a 2-unit 
in $K_a$ if $\Delta(a)$ is even, and exactly one of $\beta$ and $2\beta$
is a 2-unit $K_a$ if $\Delta(a)$ is odd.
However, for a 2-unit to have a square root that is unramified at~2,
we need the stronger condition that it is a square modulo~4.

We can assume, possibly after twisting $F$ by $t=2$, that
$\beta$ is a 2-unit in the ring of integers $\Ocal$ of $K_a$.
For $2\nmid \Delta(a)$, the group $(\Ocal/4\Ocal)^*$ has order 4 or 12,
depending on whether 2 is split or inert in $\Ocal$, and the squares
form a subgroup of index 4.
Together with $-1$, they generate the kernel of the surjective norm map
$$
N: (\Ocal/4\Ocal)^*\tto (\Zee/4\Zee)^*.
$$
By assumption,
we have $\beta\beta'\congr b\congr1\mod 4\Ocal$, so the residue classes 
$\beta, \beta'\in\ker  N$ are squares in $(\Ocal/4\Ocal)^*$ for a
unique `sign choice' of $\beta$, and 
$\Que\subset F_t$ is unramified at 2 for a unique value 
$t\in\{\pm1,\pm2\}$. This proves (1).

For $2|\Delta(a)$, the group $(\Ocal/4\Ocal)^*=(\Ocal/\gothp^4\Ocal)^*$
has order 8, and its subgroup of squares, of index 4, is of order 2.
The norm $\Ocal=\Zee[\sqrt a]\to \Zee$ induces a map
$$
N: (\Ocal/4\Ocal)^*\tto (\Zee/8\Zee)^*
$$
for which the image, of order 2, is generated by $1-a\mod 8$ when 
$a\congr\pm2\mod 8$ is even, and by $5\mod 8$ when $a\congr -1\mod 4$ is odd.

In the case where $a$ is even, $\ker  N$ is non-cyclic of order 4,
generated by $-1$ and the squares in $(\Ocal/4\Ocal)^*$, and
it contains $\beta\mod 4\Ocal$ as $\beta\beta'\congr b\mod 8$
is not $5\mod 8\notin\im N$. 
In this case, we have $\Delta(b)= b\congr1\mod 8$,
and we conclude just as before that exactly one of
$F$ and $F_{-1}$ is unramified over $K$ at 2.
By the same argument applied to $F_2$, one of $F_2$ and $F_{-2}$
is unramified over $K$ at~2, so $K\subset F_t$ is unramified over 2
for exactly two values $t\in\{\pm1, \pm2\}$, as stated in~(2).

In the remaining case $a\congr -1\mod 4$, or $\Delta(a)\congr 4\mod 8$,
the residue class of 
\begin{equation}
\label{eqn:tau}
\tau=(1+\sqrt a)^2/2=(1+a)/2+\sqrt a
\end{equation}
in $(\Ocal/4\Ocal)^*$, which equals $\sqrt a\mod4\Ocal$ for
$a\congr -1\mod 8$ and $2+\sqrt a\mod4\Ocal$ for $a\congr 3\mod 8$,
has square $-1\mod4\Ocal$, so it is of order 4 and generates $\ker  N$.

We now have 2 cases.
For $\Delta(b)= b\congr1\mod 8$ we have $\beta\mod 4\Ocal\in\ker  N$,
and twisting by $t=2$, which replaces $\beta$ by $\beta/\tau$, may be
used to move $\beta$ into the subgroup $\pm1\mod4\Ocal$ of squares in
$(\Ocal/4\Ocal)^*$.
In this case either $F$ and $F_{-1}$ or $F_2$ and $F_{-2}$ are
unramified over $K$ at 2, proving (2).

The final case $a\congr-1\mod4$ and
$\Delta(b)= b\congr5\mod 8$ is the case occurring in~(3).
Here twisting by $-1$ or $2$ cannot move $\beta$ or $\beta'$ into
$\ker N$, and the extension $K_a\subset F=K_a(\sqrt\beta,\sqrt{\beta'})$
is ramified at the prime $\gothp|2$ of $K_a$.
This implies that $K\subset F_t$ is ramified over 2 for all
$t\in\Que^*$, proving (3).

Alternatively, one can argue for (3) that if the ramified prime over 2 in $K$,
which is inert in $K\subset E$,
were unramified in the cyclic quartic extension $K\subset F_t$,
the primes over 2 in $F_t$ would have ramification index 2
and residue class degree~4 over~$\Que$;
but the dihedral group of order 8 has no cyclic quotient of order 4.
\end{proof}
\noindent
The \emph{ramified case} (3) of Proposition \ref{prop:F4unramat2} does not occur when
$D=\Delta(a)\Delta(b)$ is a decomposition satisfying the conditions of
Lemma~\ref{lemma:F2-criterion}, as for even $D$, the prime 2
splits in either $\Que(\sqrt a)$ or $\Que(\sqrt b)$, by 
condition (4) of Lemma~\ref{lemma:F2-criterion}.
\begin{corollary}
\label{cor:F4unr}
Let $(x,y,z)$ be a primitive integral solution to \eqref{eqn:normd1d2}
for $D=d_1d_2$ satisfying the conditions of Lemma~$\ref{lemma:F2-criterion}$.
Then there exists $t\in\{\pm1,\pm2\}$ such that 
$F_t=\Que(\sqrt{d_1},\sqrt{d_2}, \sqrt{tx+ty\sqrt {d_1}})$ is 
unramified and cyclic of degree $4$ over $\Que(\sqrt D)$.
\end{corollary}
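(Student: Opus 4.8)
The plan is to reduce everything to Proposition \ref{prop:F4unramatp}, treating the odd primes and the prime $2$ separately. Throughout I write $K=\Que(\sqrt D)$, $E=\Que(\sqrt{d_1},\sqrt{d_2})$, $\beta=x+y\sqrt{d_1}$, and $F_t=E(\sqrt{t\beta})$, so that $F_1=F(x,y,z)$ is the extension \eqref{eqn:Fxyz}. Since $D=d_1d_2$ is a genuine decomposition with $d_1\ne d_2$ and $d_2$ a non-square, Lemma \ref{lemma:biquadratic}(1) and Corollary \ref{corollary:D4} already give that each twist $F_t$ is cyclic of degree $4$ over $K$ and dihedral over $\Que$; this algebraic structure is insensitive to the twist, so the only thing requiring proof is that \emph{some} $t\in\{\pm1,\pm2\}$ makes $K\subset F_t$ unramified at every prime.

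First I would dispose of the odd primes, showing that $K\subset F_1$ is already unramified at all of them and that twisting by $t\in\{\pm1,\pm2\}$ cannot spoil this. For an odd prime $p\mid D$, the number $p$ divides exactly one of $d_1,d_2$ because the odd part of $D$ is squarefree, so Proposition \ref{prop:F4unramatp}(2)---applied with $(a,b)=(d_1,d_2)$ or $(d_2,d_1)$, the two descriptions of $F$ being interchanged by \eqref{eqn:absymm}---gives unramifiedness over $p$ directly. For an odd prime $p\nmid D$ the field $E$ is unramified at $p$, and here I would use the \emph{primitivity} of $(x,y,z)$: from $N\beta=x^2-d_1y^2=d_2z^2$ with $p\nmid d_2$, a prime $\gothp\mid p$ of $\Que(\sqrt{d_1})$ dividing $\beta$ would force $p\mid x$ and $p\mid y$, hence $p\mid z$, contradicting $\gcd(x,y,z)=1$; this pins down $\ord_\gothp(\beta)$ to be even at every $\gothp\mid p$, so $F_1/E$ is unramified at $p$. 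Finally, since $\Delta(t)$ is supported only at $2$ for $t\in\{\pm1,\pm2\}$, the remark that $F_1$ and $F_t$ ramify at the same rational primes $p\nmid\Delta(t)$ shows $F_t$ is unramified over $K$ at every odd prime as well.

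It remains to choose $t\in\{\pm1,\pm2\}$ killing ramification at $2$, and this is where the conditions of Lemma \ref{lemma:F2-criterion} enter. If $D$ is odd then $2\nmid\Delta(d_1)\Delta(d_2)$, and Proposition \ref{prop:F4unramatp}(1) at $p=2$ supplies a unique $t\in\{\pm1,\pm2\}$ with $\Que\subset F_t$, hence $K\subset F_t$, unramified at $2$. If $D$ is even, exactly one of $d_1,d_2$ carries the dyadic discriminant part, and condition (4) of Lemma \ref{lemma:F2-criterion} forces $2$ to split in the quadratic field attached to the \emph{odd} one, so that the even $d_i$ has $\Delta(d_i)$ even while the odd one is $\congr1\bmod 8$. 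This is exactly the hypothesis of Proposition \ref{prop:F4unramatp}(3), applied with $(a,b)=(d_1,d_2)$ via the description $F_t=E(\sqrt{t\beta})$ if $d_1$ is the even one, and with $(a,b)=(d_2,d_1)$ via the symmetric description $F_t=E(\sqrt{t\alpha})$ from \eqref{eqn:twist} if $d_2$ is; either way Proposition \ref{prop:F4unramatp}(3) yields two values of $t\in\{\pm1,\pm2\}$ making $K\subset F_t$ unramified at $2$. Combining this with the odd-prime step produces a single $t\in\{\pm1,\pm2\}$ for which $K\subset F_t$ is everywhere unramified and cyclic of degree $4$.

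The main obstacle is the dyadic analysis of the last paragraph: one has to be certain that the conditions of Lemma \ref{lemma:F2-criterion} always land us in the benign cases (1) or (3) of Proposition \ref{prop:F4unramatp} and never in the ramified case (4). The crux is that case (4) requires $2$ to be inert in \emph{both} $\Que(\sqrt{d_1})$ and $\Que(\sqrt{d_2})$, whereas condition (4) of Lemma \ref{lemma:F2-criterion} guarantees that $2$ splits in at least one of them whenever $D$ is even; this is precisely the feature that fails for decompositions \emph{not} of the second kind, as illustrated by the final even decomposition in the example of Section \ref{S:Computing Redei-symbols}.
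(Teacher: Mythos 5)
Your overall architecture is the same as the paper's: primitivity disposes of the odd primes, Proposition \ref{prop:F4unramatp}(1) and (3) supply the twist $t\in\{\pm1,\pm2\}$ at $p=2$, and condition (4) of Lemma \ref{lemma:F2-criterion} is exactly what rules out the ramified case (4) of that proposition; your dyadic paragraph and the observation that twisting by $t$ supported at $2$ cannot disturb odd primes are both correct. However, there is a genuine flaw in your odd-prime step for $p\nmid D$: the implication ``a prime $\gothp\mid p$ of $\Que(\sqrt{d_1})$ dividing $\beta$ would force $p\mid x$ and $p\mid y$'' is false whenever $p$ \emph{splits} in $\Que(\sqrt{d_1})$, since only divisibility by the full ideal $(p)=\gothp_1\gothp_2$ forces $p$ to divide both coordinates. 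A concrete counterexample inside the corollary's own setting: for the decomposition $205=5\cdot 41$ of the second kind, $(x,y,z)=(71,4,11)$ is a primitive solution of $x^2-5y^2-41z^2=0$, the prime $11\nmid D$ splits in $\Que(\sqrt 5)$, and $\beta=71+4\sqrt 5$ has norm $41\cdot 11^2$, so one of the two primes above $11$ divides $\beta$ (to order $2$) even though $11\nmid\gcd(71,4)$. In particular your conclusion ``$\ord_\gothp(\beta)=0$ at every $\gothp\mid p$'' is simply not what primitivity gives.

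The statement you actually need, and the one that is true, is that $\ord_\gothp(\beta)$ is \emph{even} at every $\gothp\mid p$, and the repair is the valuation argument already present in the proof of Proposition \ref{prop:F4unramatp}: for split $(p)=\gothp_1\gothp_2$ one has $\ord_{\gothp_1}(\beta)+\ord_{\gothp_2}(\beta)=\ord_p(N\beta)=\ord_p(d_2z^2)=2\ord_p(z)$, which is even, and primitivity forbids \emph{both} valuations from being positive (that would mean $p\mid\beta$, hence $p\mid x$, $p\mid y$, and then $p\mid z$ from the norm equation); so one valuation is $0$ and the other is even, which is the criterion for $F_1$ to be unramified over $\Que(\sqrt{d_1})$, hence over $K$, at $p$. (In the inert case your argument is fine, since there $\gothp\mid\beta$ does mean $p\mid\beta$.) This is precisely how the paper argues --- its phrase ``units at \emph{a} prime over $p$'' deliberately claims unit valuation at only one prime and lets the norm equation do the rest --- so with this one step corrected your proof coincides with the published one.
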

\begin{proof}
For $(x,y,z)$ primitive and $p$ odd, $\beta=x+y\sqrt{d_1}$ and
$\alpha=2(x+z\sqrt{d_2})$ are not divisible by $p$,
hence units at a prime over $p$ in $\Que(\beta)$ and $\Que(\alpha)$,
making $\Que(\sqrt D)\subset F_1$ unramified outside 2.
Twisting by $t\in\{\pm1,\pm2\}$ as in (1) and~(2) of
Proposition \ref{prop:F4unramat2} makes it unramified at 2 as well.
\end{proof}
\noindent
In the ramified case (3) of Proposition \ref{prop:F4unramat2},
with $a\congr-1\mod 4$ and $b\congr5\mod8$,
which is essential for R\'edei reciprocity,
the extension $K\subset F$ in \eqref{eqn:Fxyz} gives rise
to a local field $F\otimes \Que_2$ that is dihedral of degree 8 
over $\Que_2$, and quadratic~over
\begin{equation}
\label{eqn:E2}
E\otimes \Que_2=\Que_2(\sqrt a,\sqrt b)=\Que_2(i, \sqrt 5).
\end{equation}
It is cyclic over $\Que_2(\sqrt{-5})$ for $a\congr-1\mod8$,
and cyclic over $\Que_2(i)$ for $a\congr -5\mod8$.
Ramification in $E\otimes \Que_2\subset F\otimes \Que_2$ cannot be
avoided, but one can obtain \emph{$2$-minimal ramification}
after twisting, if necessary, by the generator $t=2$ of 
$$
\Que_2^*/\langle a, b, {\Que_2^*}^2\rangle=
\Que_2^*/\langle -1, 5, {\Que_2^*}^2\rangle\iso\Zee/2\Zee.
$$
In view of \eqref{eqn:tau}, 
this amounts to replacing $\beta$ by $\tau\beta$.
In this way we can make $\beta$ trivial in the group
$(\Ocal/2\Ocal)^*=\langle\bar\tau\rangle=\langle\sqrt a\mod2\Ocal\rangle$
of order 2, and we can even change the sign of $\beta$ -- this does
not change $F\otimes \Que_2$ -- to achieve
$\beta\congr1\mod\gothp^3$, with $\gothp|2$ in $K_a$.
This is not quite the congruence $\beta\congr1\mod\gothp^4$
that would make $K_a=\Que(\sqrt a)\subset F$ unramified over 2, but
it does ensure that the local extension
$\Que_2(\sqrt a)\subset F\otimes \Que_2$ is of conductor $2$,
the minimum for a ramified biquadratic extension of $\Que_2(\sqrt a)$.
One has $F\otimes \Que_2=\Que_2(i, \sqrt 5, \sqrt x)$
with $x=1+2i$ for $a\congr-1\mod8$ and $x=3+2\sqrt{-5}$ for $a\congr-5\mod 8$.
\begin{definition}
\label{def:2minram}
In the ramified case $(3)$ of Proposition $\ref{prop:F4unramat2},$
with $a\congr-1\mod4$ and $b\congr5\mod8$, the extension
$K\subset F$ is $2$-minimally ramified if the local biquadratic extension
$\Que_2(\sqrt a)\subset F\otimes \Que_2$ is of conductor~$2$.
\end{definition}
\noindent
The requirement in Definition \ref{def:2minram} means that we
have $F=E(\sqrt\beta)$ for an element $\beta\in 1+2\Ocal\subset K_a^*$.
Any $F$ in case (3) of Proposition \ref{prop:F4unramat2}
has a twist $F_t$ with $t\in\{\pm1,\pm2\}$,
unique up to sign, that is $2$-minimally ramified.

For arbitrary non-trivial elements $a, b\in\Que^*/{\Que^*}^2$
for which \eqref{eqn:normab} admits non-zero solutions, we are led
to the following global notion of \emph{minimal ramification}.
\begin{definition}
\label{def:minram}
For $a, b\in\Que^*/{\Que^*}^2\setminus\{1\}$, the extension
$K\subset F$ in \eqref{eqn:Fxyz} defined 
by a non-zero rational solution to \eqref{eqn:normab}
is said to be minimally ramified over $E$ if it is
\begin{enumerate}
\item
unramified over all odd primes $p\nmid\gcd(\Delta(a),\Delta(b))$;
\item
unramified over $2$ when $\Delta(a)\Delta(b)$ is odd,
or one of $\Delta(a), \Delta(b)$ is $1\mod 8$;
\item
$2$-minimally ramified if $(\Delta(a),\Delta(b))$ is
$(5,4)$ or $(4,5)$ modulo $8$.
\end{enumerate}
\end{definition}
\noindent
In the special case $a=b$, we recover our earlier definition 
of a minimally ramified extension $\Que\subset F_{a,a}$, as
being unramified at the primes $p\nmid a$.

Every extension $K\subset F$ in \eqref{eqn:Fxyz} can be twisted by some
$t\in\Que^*/{\Que^*}^2$ to obtain a minimally ramified
extension $K\subset F_{a,b}$, but $F_{a,b}$ is not uniquely determined
by $a,b \in \Que^*/{\Que^*}^2$.
More precisely, we have a finite {\it twisting subgroup\/} 
\begin{equation}
\label{eqn:Tab}
T_{a,b}\subset \Que^*/{\Que^*}^2
\end{equation}
just as for $a=b$ in \eqref{eqn:Taa}. 
It is generated by the residue classes of the odd signed primes 
$p^*$ occurring in the discriminantal factorizations
\eqref{eqn:Dfactorization} of $\Delta(a)$ and $\Delta(b)$, 
together with $-1$ and $2$ if both $\Delta(a)$ and $\Delta(b)$
are even, and with the unique non-trivial
discriminantal 2-part $t_{\Delta(a)}$ or $t_{\Delta(b)}$
in $\{-4, \pm 8\}$ if only one of them is even.
For $a=b$, this definition coincides with \eqref{eqn:Taa}.
It is tailored to obtain the following.
\begin{lemma}
\label{lemma:twistedF}
For $a, b \in\Que^*/{\Que^*}^2\setminus\{1\}$ satisfying
\eqref{eqn:abp=1}, there exists
$F=F(x, y,z)$ in~\eqref{eqn:Fxyz}
that is minimally ramified over $E$.
For such $F$ and $t\in\Que^*/{\Que^*}^2$, we have
$$
K \subset F_t \text{\ is minimally ramified}
\equi
t\in T_{a,b}.
$$
\end{lemma}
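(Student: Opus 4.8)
The plan is to regard minimal ramification as a finite list of conditions, one at each prime, each depending only on the local square class of the twisting element, and then to repair one prime at a time for existence before identifying the full group of good twists with $T_{a,b}$. For existence, note that only finitely many primes can violate Definition~\ref{def:minram}. At an odd prime $p\nmid ab$ we have $p\nmid\Delta(a)\Delta(b)$, so Proposition~\ref{prop:F4unramatp}(1) supplies a unique $t\in\{1,p\}$ making $K\subset F_t$ unramified over $p$; at odd $p\mid ab$, part~(2) (and its $a\leftrightarrow b$ mirror) shows $F$ is already unramified wherever Definition~\ref{def:minram}(1) requires it, while primes dividing $\gcd(\Delta(a),\Delta(b))$ carry no condition. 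Because twisting by a prime $q$ changes valuations only at $q$, the finitely many required odd twists multiply into a single element that fixes all odd primes at once; a final twist by a suitable element of $\{\pm1,\pm2\}$, which affects no odd prime, then achieves condition (2) or~(3) at~$2$ by Proposition~\ref{prop:F4unramatp}(1),(3) and the construction preceding Definition~\ref{def:2minram}. This produces a minimally ramified $F$.

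Fixing such an $F$, we have $1\in S$, where $S$ denotes the set of $t$ with $F_t$ minimally ramified, and since each local condition asks that the image of $t$ lie in a subgroup of the relevant $\Que_p^*/{\Que_p^*}^2$, the set $S$ is a subgroup of $\Que^*/{\Que^*}^2$. For the inclusion $T_{a,b}\subseteq S$ I would verify the generators in \eqref{eqn:Tab} prime by prime. A signed prime $p^*$ with $p\mid ab$ odd has trivial contribution away from $p$, is harmless at $p$ by Proposition~\ref{prop:F4unramatp}(2), and---this is the clean point---satisfies $p^*\congr1\mod4$, so $p^*\beta\congr\beta\mod 4\Ocal$ and the class of $\beta$ in $(\Ocal/4\Ocal)^*$, which governs the condition at~$2$, is unchanged. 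The $2$-parts $t_a,t_b\in\{1,-4,\pm8\}$ and, in the both-even case, the classes $-1,2$ are dyadic twists: that they preserve the condition at~$2$ is read off from the proof of Proposition~\ref{prop:F4unramatp}, where in the unramified cases the two admissible twists in $\{\pm1,\pm2\}$ differ exactly by the even one of $t_a,t_b$, while in the $2$-minimally ramified case a sign change leaves the local extension $E_2(\sqrt\beta)$, hence its conductor, unchanged.

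For the reverse inclusion, take $t\in S$. At each odd $p\nmid ab$ both $F$ and $F_t$ are unramified, which by the valuation computation in Proposition~\ref{prop:F4unramatp}(1) forces $\ord_p(t)$ even; thus, as a squarefree integer, $t$ is supported on $-1$, $2$, and the odd primes dividing $ab$, so $t$ lies in the group $G=\langle -1,2,\ \{p:p\mid ab\text{ odd}\}\rangle$. It remains to cut $G$ down by the condition at~$2$: comparing the subgroup of $\Que_2^*/{\Que_2^*}^2$ that preserves Definition~\ref{def:minram}(2)--(3) with the dyadic subgroup generated by $t_a,t_b$ (and $-1,2$), one checks case by case on $(\Delta(a),\Delta(b))\bmod 8$ that $\dim_{\F2}S=\dim_{\F2}T_{a,b}$, so the inclusion $T_{a,b}\subseteq S$ already proved forces equality.

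The main obstacle is exactly this dyadic bookkeeping. One must match the signs hidden in $t_a,t_b=\pm8,-4$ against the $(\Ocal/4\Ocal)^*$-computations of Proposition~\ref{prop:F4unramatp}, and confirm that the local square classes at~$2$ preserving minimal ramification are generated precisely by $t_a,t_b$ (and by $-1,2$ when both discriminants are even). A useful sanity check that makes the case division close up is that the residue pairs $(\Delta(a),\Delta(b))\bmod 8$ left uncovered by Definition~\ref{def:minram}, such as $(5,0)$, are exactly those for which \eqref{eqn:normab} has no $2$-adic point and hence never occur.
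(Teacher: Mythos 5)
Your skeleton is sound (the set $S$ of good twists is a subgroup, check the generators of $T_{a,b}$, then control the dyadic part), and several of your points are correct and well isolated: that $p^*\congr1\bmod 4$ leaves the class of $\beta$ in $(\Ocal/4\Ocal)^*$, hence conditions (2) and (3) of Definition~\ref{def:minram}, untouched; that a sign change does not alter the local extension in the $2$-minimally ramified case; and that the pairs such as $(\Delta(a),\Delta(b))\congr(5,0)\bmod 8$ left out of Definition~\ref{def:minram} are excluded by the $2$-adic Hilbert symbol. But there is a genuine gap exactly where you place ``the main obstacle'', and the citation you offer does not close it. For $\Delta(a)\congr0\bmod 8$ (so $t_a=\pm8$) and $\Delta(b)\congr1\bmod 8$, you need that $F_{t_a}$ is unramified over $2$ whenever $F$ is, and you assert this can be ``read off from the proof of Proposition~\ref{prop:F4unramatp}''. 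It cannot: the proof of part (3) only shows that one of $F,F_{-1}$ and one of $F_2,F_{-2}$ is unramified over $2$, i.e.\ that the second admissible twist is $2$ or $-2$; nothing there decides \emph{which}, and that undecided sign is precisely what must be matched against $t_a=8$ versus $t_a=-8$. The same sign question is the entire content of your deferred ``one checks case by case that $\dim_{\FF_2}S=\dim_{\FF_2}T_{a,b}$'', so both inclusions ultimately rest on an unproved claim.

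The claim is true, and can be settled in two ways. Directly: for $a=2m$ one has $2\congr m$ modulo squares in $\Que(\sqrt a)^*$ (since $2m$ is a square there), so twisting by $2$ is twisting by $m$ and twisting by $-2$ is twisting by $-m$; exactly one of $\pm m$ is $\congr1\bmod4$, that twist fixes the class of $\beta$ in $(\Ocal/4\Ocal)^*$, and it corresponds to $t_a=8$ for $m\congr1\bmod 4$ and to $t_a=-8$ for $m\congr3\bmod4$. Conceptually---and this is the idea your proposal is missing, and the one the paper's proof runs on---use genus theory: by \eqref{eqn:chipi} and \eqref{eqn:genusfield}, $t_a$ is a discriminantal divisor of $\Delta(a)$, so $\Que(\sqrt a,\sqrt{t_a})$ is unramified over $\Que(\sqrt a)$ at all finite primes; hence $E(\sqrt t)/E$ is unramified at all finite primes for every $t\in T_{a,b}$, and since $F_t\subset FE(\sqrt t)$, such twists change neither ramification nor conductors over $E$, giving $T_{a,b}\subseteq S$ in one stroke (the both-even case, where Definition~\ref{def:minram} imposes nothing at $2$, is exactly why $-1$ and $2$ are adjoined in \eqref{eqn:Tab}). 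Note finally that once $T_{a,b}\subseteq S$ is in hand, your reverse inclusion needs no open-ended case check: your reduction of $S$ into $G$, together with the counting already proved in Proposition~\ref{prop:F4unramatp}(1),(3) (a unique, respectively exactly two, admissible dyadic twists) and the ``unique up to sign'' remark after Definition~\ref{def:2minram}, pins $S\cap\langle-1,2\rangle$ down to $\langle t_a,t_b\rangle$.
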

\begin{proof}
We already showed existence.
If $\Delta(a)$ and $\Delta(b)$ are not both even, 
it follows from \eqref{eqn:chipi} that
the elements $t\in T_{a,b}$ are exactly the Dirichlet
characters of the quadratic extensions
$\Que\subset \Que(\sqrt t)$ that become unramified over 
$E=\Que(\sqrt a, \sqrt b)$,
and preserve the minimal ramification of $F$ under twisting.
If both $\Delta(a)$ and $\Delta(b)$ are even, inclusion of
both generators $-1$ and $2$ `at 2' ensures that for $t_a=t_b\ne 1$, when
Definition \ref{def:minram} imposes no restriction on ramification at 2
on $K\subset F$, we do allow all possible quadratic
twists of 2-power conductor.
\end{proof}

\noindent 
We are now ready to define the R\'edei symbol $[a,b,c]$
for $a, b, c \in\Que^*/{\Que^*}^2$ satisfying
\eqref{eqn:hilbabc} and \eqref{eqn:abccoprime} from
the Introduction, i.e., with relative quadratic Hilbert symbols
$$
(a,b)_p=(a,c)_p=(b,c)_p=1
$$
at all primes $p$, and associated discriminants satisfying the
coprimality condition
$$
\gcd(\Delta(a),\Delta(b),\Delta(c))=1.
$$

\begin{definition}
\label{def:redeisymbol}
For non-trivial $a, b, c \in\Que^*/{\Que^*}^2$ satisfying
\eqref{eqn:hilbabc} and \eqref{eqn:abccoprime},
let $K=\Que(\sqrt{ab})\subset F_{a,b}$ be minimally ramified
over $E=\Que(\sqrt a, \sqrt b)$, as in Definition $\ref{def:minram}$.
Then the R\'edei symbol 
$$
[a,b,c]\in\Gal(F_{a,b}/E)=\FF_2
$$
is defined as
\begin{equation}
\label{eqn:redeisymbol}
[a,b,c]= \Art_c (F_{a,b}/K) =
\begin{cases}
	\Art (\gothc, F_{a,b}/K) &\text{if}\quad c>0;\\
	\Art (\gothc\infty, F_{a,b}/K) &\text{if}\quad c<0.\\
\end{cases}
\end{equation}
Here $\gothc$ is an integral $\Ocal_K$-ideal of norm $|c_0|$,
with $c_0$ the squarefree integer in the class of $c$,
and $\infty$ denotes an infinite prime of $K$.

If one of $a$, $b$, or $c$ is trivial in $\Que^*/{\Que^*}^2$, we
take $[a,b,c]=0$.
\end{definition}
\noindent
With this definition,
which we show in Corollary \ref{cor:indepF}
to be independent of the choice of the minimally ramified extension
$K\subset F_{a,b}$,
the R\'edei symbol becomes perfectly symmetric in its 3 arguments.
\emph{R\'edei's reciprocity law} is the following precise version
of Theorem \ref{thm:reclaw}.
\begin{theorem}
For $a, b, c \in\Que^*/{\Que^*}^2$ satisfying
\eqref{eqn:hilbabc} and \eqref{eqn:abccoprime},
the symbol~\eqref{eqn:redeisymbol} is well-defined,
linear in each of its arguments, and satisfies
$$
[a,b,c]=[b,a,c]=[a,c,b]\in \FF_2.
$$
\end{theorem}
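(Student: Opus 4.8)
The plan is to reduce R\'edei's reciprocity law to the product formula \eqref{eqn:productformula} for Hilbert symbols over the quadratic field $\Que(\sqrt a)$, viewing the symbol $[a,b,c]$ as a global Artin symbol whose vanishing is measured by a sum of \emph{local} conditions. Well-definedness (independence of the minimally ramified $F$) should be settled first, using Lemma \ref{lemma:twistedF}: any two admissible choices differ by a twist $t\in T_{a,b}$, and I would check that changing $F$ to $F_t$ does not alter $\Art_c(F/K)$ precisely because $c$ satisfies the Hilbert-symbol conditions \eqref{eqn:hilbabc} with $a$ and $b$. Multiplicativity in $c$ is immediate from the multiplicativity of the Artin map in its ideal argument, and multiplicativity in $a$ and $b$ will follow once the two key symmetries $[a,b,c]=[a,c,b]$ and $[a,b,c]=[b,a,c]$ are established, since these let one transfer linearity from one slot to the others.

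The core is the symmetry $[a,b,c]=[a,c,b]$, which I would prove as the genuinely arithmetic step. Working over $K_a=\Que(\sqrt a)$, Corollary \ref{corollary:D4} and \eqref{eqn:absymm} exhibit $F$ symmetrically: $F=E(\sqrt\beta)$ with $\beta=x+y\sqrt a$ of norm in $b\cdot{\Que^*}^2$, but also $F=E(\sqrt\alpha)$ with $\alpha=2(x+z\sqrt c)$ of norm in $a\cdot{\Que^*}^2$ coming from the \emph{same} conic $x^2-ay^2-cz^2=0$ (replacing $b$ by $c$ in the roles). Since the conic \eqref{eqn:normab} is symmetric under interchanging $(b,y)\leftrightarrow(c,z)$, a single solution $(x,y,z)$ simultaneously builds the field relevant to $[a,b,c]$ and to $[a,c,b]$, and the interchange fixes $F$. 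The Artin symbol $\Art_c(F/K)$ then reads off the splitting of (the primes dividing) $c$ in $K\subset F$, which I would reinterpret as a Hilbert symbol $(a,\beta)$-type expression over $K_a$: concretely, $\Art(\gothc,F/K)$ computes whether $\beta$ is a square in the completions of $K_a$ at the primes over $c$, i.e. a product of local Hilbert symbols $(\beta, c)_{\gothp}$ or equivalently $(a,\cdots)_p$ over the rational primes $p\mid c$.

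The decisive identity is that the product formula over $K_a=\Que(\sqrt a)$, applied to the pair $(\beta, c)$ (where $\beta$ has norm $b$ up to squares), equates the contribution from primes over $c$ to the contribution from primes over $b$ together with the controlled ramified/infinite places. Expanding $\prod_{\gothp}(\beta,c)_{\gothp}=1$ and separating the places dividing $c$, those dividing $b$, the ramified place(s) over $2$, and the infinite places, the finite-places-over-$c$ factor is $[a,b,c]$ while the finite-places-over-$b$ factor is $[a,c,b]$, and the remaining places cancel. This cancellation is exactly where the three Hilbert-symbol hypotheses \eqref{eqn:hilbabc}, the coprimality \eqref{eqn:abccoprime}, and the sign/infinite-prime convention in \eqref{eqn:redeisymbol} are used, and is \textbf{the main obstacle}: one must verify that the dyadic and archimedean local symbols, which are precisely what R\'edei's original definition mishandled, contribute trivially (or in matching pairs) under the \emph{minimal ramification} normalization of Definition \ref{def:minram}, including the delicate case $(\Delta(a),\Delta(b))\equiv(4,5)\bmod 8$ of case (4) of Proposition \ref{prop:F4unramatp}. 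Granting $[a,b,c]=[a,c,b]$, the remaining symmetry $[a,b,c]=[b,a,c]$ follows by running the same argument over $K_b=\Que(\sqrt b)$ in place of $K_a$ — the roles of $a$ and $b$ in the conic and in \eqref{eqn:absymm} are interchangeable — so that full $S_3$-symmetry, and hence multiplicativity in every argument, drops out once the central Hilbert-symbol computation is in place.
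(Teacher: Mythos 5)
Your framework---expressing $[a,b,c]$ through local Hilbert symbols over $K_a=\Que(\sqrt a)$ and invoking the product formula---is indeed the paper's strategy (Section \ref{S:Proving Redei reciprocity}). But your central identity is false, and the failure is structural, not a matter of the dyadic and archimedean bookkeeping you flag as the main obstacle. You apply the product formula to the pair $(\beta,c)$, where $\beta\in K_a^*$ has norm $b$ and $c$ is \emph{rational}. By norm compatibility of the Hilbert symbol, for every rational prime $p\le\infty$ one has
$$
\prod_{\gothp | p\text{ in }K_a}(\beta, c)_\gothp \;=\; \bigl(N_{K_a/\Que}\beta,\, c\bigr)_p \;=\; (b,c)_p \;=\;1,
$$
the last equality by hypothesis \eqref{eqn:hilbabc}. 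So every group of local factors in your expansion of $\prod_\gothp(\beta,c)_\gothp=1$ is already trivial: the product formula for $(\beta,c)$ reads $1=1$ and carries no information about R\'edei symbols. In particular, your claim that the places over $c$ contribute $[a,b,c]$ is wrong: in the split case $(p)=\gothp_1\gothp_2$ with $p\mid c$, the two factors $(\beta,c)_{\gothp_1}$ and $(\beta,c)_{\gothp_2}=(\beta',c)_{\gothp_1}$ multiply to $(\beta\beta',c)_{\gothp_1}=(b,c)_p=1$, whereas the actual $p$-part of $[a,b,c]$ is the symbol at \emph{one} of these primes, $(\beta,\pi)_{\gothp_1}$ as in \eqref{eqn:ppartishilb}, which can perfectly well be $-1$. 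The point is that the rational number $c$ is (up to units and squares) a uniformizer at \emph{both} primes above $p$, and this symmetry kills all content.

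What is missing is the second auxiliary element: one must choose a minimally ramified $F'=E'(\sqrt\gamma)$ for $K'=\Que(\sqrt{ac})$, with $\gamma\in K_a^*$ of norm $c$---not $c$ itself---and pair $\beta$ against $\gamma$. Because $\gamma$ is, up to squares, a uniformizer at one prime over $p\mid c$ and a \emph{unit} at the conjugate prime, the grouped product $\prod_{\gothp\mid p}(\beta,\gamma)_\gothp$ no longer collapses; the paper's Lemma \ref{lemma:keylemma} proves $[a,b,c]_{F,p}\cdot[a,c,b]_{F',p}=\prod_{\gothp\mid p}(\beta,\gamma)_\gothp$ for all $p\le\infty$ by a case analysis (this is where minimal ramification at $2$ and the convention at $\infty$ do their work), and only then does the product formula for $(\beta,\gamma)$ yield the theorem. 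Relatedly, your assertion that a single conic solution ``simultaneously builds the field relevant to $[a,b,c]$ and to $[a,c,b]$'' is a confusion: the identity \eqref{eqn:absymm} makes the \emph{same} field $F$ serve for $[a,b,c]$ and $[b,a,c]$ (the easy symmetry, both symbols being read in the same extension of $K=\Que(\sqrt{ab})$), whereas $[a,c,b]$ lives in a genuinely different field, cyclic over $\Que(\sqrt{ac})$; no twist of $F$ can replace it. Your treatment of well-definedness via Lemma \ref{lemma:twistedF} and of multiplicativity is reasonable (the paper instead obtains independence of $F$ as Corollary \ref{cor:indepF} of the reciprocity law, and your direct route is the alternative argument indicated in Lemma \ref{lemma:Artcaction}), but the core of the theorem is not reached.
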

\noindent
We say that the R\'edei symbol $[a,b,c]$ \emph{is defined} if its
arguments $a, b, c \in\Que^*/{\Que^*}^2$ satisfy 
the conditions \eqref{eqn:hilbabc} and \eqref{eqn:abccoprime}.

The inclusion of the infinite prime in the definition \eqref{eqn:redeisymbol}
almost by tautology leads to the following useful property.
\begin{proposition}
\label{prop:ab-ab}
Let $D=d_1d_2$ be a decomposition of the second kind
as in Definition $\ref{def:redeisym}$.
Then the R\'edei symbol $[d_1,d_2,-d_1d_2]\in \FF_2$ is defined and
equals~$0$.
\end{proposition}
\begin{proof}
Multiplying $(d_1,d_2)_p=1$ in \eqref{eqn:redeisymbol} by the 
trivial symbols $(d_1,-d_1)_p$ and $(-d_2,d_2)_p$, we obtain
$(d_1,-d_1d_2)_p=(-d_1d_2,d_2)=1$, so $[d_1,d_2,-d_1d_2]$
satisfies \eqref{eqn:hilbabc}, and obviously also \eqref{eqn:abccoprime}.

Suppose $D=d_1d_2<0$. 
Then the principal ideal $(\sqrt D)$ in the ring of integers
of $K=\Que(\sqrt D)$ of norm $c=-D=-d_1d_2>0$ is trivial in $C(D)$,
so its Artin symbol acts trivially on any unramified abelian extension
$K\subset F$.
In case $D$ is odd, $c$ is squarefree, and 
definition \eqref{eqn:redeisymbol} with $a=d_1$, $b=d_2$, and
$\gothc=(\sqrt D)$ yields the desired equality $[a,b,c]=[d_1,d_2,-d_1d_2]=0$.
For $D<0$ even, $\gothc=(\frac{1}{2}\sqrt D)$ of squarefree norm $-d_1d_2/4$
does the same.

In the case $D=d_1d_2>0$, the class of the ideal
$\gothc=(\sqrt D)$ of norm $D=d_1d_2$ in~$C(D)$ is
the Frobenius at infinity $F_\infty$ from \eqref{eqn:narrow-ordinary},
which acts as $\Art(\infty, F/K)$ on the finitely unramified
abelian extension $K\subset F$.
For $D$ odd and $c=-D<0$ squarefree, $[a,b,c]=[d_1,d_2,-d_1d_2]$
now corresponds to the action of the \emph{square} of 
$\Art(\infty, F/K)$ on $F$, which is \emph{also} the identity,
yielding $[d_1,d_2,-d_1d_2]=0$.
For $D>0$ even, $\gothc=(\frac{1}{2}\sqrt D)$ does the job.
\end{proof}

\section{Proving Redei reciprocity}
\label{S:Proving Redei reciprocity}

\noindent
We already mentioned that R\'edei's original definition
is different from \eqref{eqn:redeisymbol}. 
Not only does he omit a contribution of the infinite prime, putting
$[a,b,-c]=[a,b,c]$, he also requires at least one of
$\Delta(a)$ and $\Delta(b)$ to be odd,
making a symbol like $[-1,2,p]$ in \eqref{eqn:12p} undefined.
The resulting reciprocity law \cite{Redei}*{Satz 4} has
superfluous 2-adic restrictions on the entries, and for $bc<0$
the symbols $[a,b,c]$ and $[a,c,b]$,
which are only both defined for $\Delta(a)$ without prime factors
congruent to $3\mod 4$, differ by a product of four
quadratic and biquadratic symbols.

In his 2007 thesis, Corsman found that including an Artin
symbol at infinity for $c<0$ leads to a perfectly symmetric version of
the reciprocity law.
Both his definition of the symbol and his proof of the law
rely heavily on an incorrect lemma \cite{Corsman}*{Lemma 5.1.2}
claiming that the assumptions \eqref{eqn:hilbabc}
and \eqref{eqn:abccoprime} guarantee
the existence of an extension $K\subset F$ in \eqref{eqn:redeisymbol}
that is unramified at \emph{all} primes $p\nmid\gcd(\Delta(a),\Delta(b))$.
Smith's paper on the average 8-rank behavior of imaginary
quadratic class groups also has an incorrect version of the reciprocity
law \cite{Smith}*{Proposition 2.1}
that disregards the subtleties at both infinite and dyadic primes.

\bigskip\noindent
We now let $a$, $b$, and $c$ be squarefree integers different from 1
satisfying \eqref{eqn:hilbabc} and~\eqref{eqn:abccoprime}.
To see that $[a,b,c]$ is well-defined,
and independent of the many choices that go into the definition
of the symbol,
we first note, using Lemma \ref{lemma:twistedF}, that
an extension $K\subset F_{a,b}$ in \eqref{eqn:redeisymbol}
that is minimally ramified over $E$ does exist,
and that it is unique up to twisting by $t\in T_{a,b}$.

Let $K\subset F$ be minimally ramified over $E=\Que(\sqrt a, \sqrt b)$,
and~$p$ a prime dividing~$c$.
Then~$p$ is split or ramified in $\Que(\sqrt a)$ and in $\Que(\sqrt b)$ by
\eqref{eqn:hilbabc}, and 
unramified in at least one of these fields by \eqref{eqn:abccoprime}.
For a prime $\gothp_K|p$ in $K$, this implies that $\gothp_K$
is of degree~1, and split in the extension $K\subset E$.
Moreover, $\gothp_K$ is \emph{unramified} in $K\subset F$ for primes $p|c$.
Indeed, for odd $p$ we are in case (1) of Definition~\ref{def:minram}
by \eqref{eqn:abccoprime}.
For $2|c$ at least one of $\Delta(a), \Delta(b)$ is odd,
say $\Delta(b)$, and then the condition $(b,c)_2=(\Delta(b), 2)_2=1$
in \eqref{eqn:hilbabc}
shows that we have $\Delta(b)\congr 1\mod 8$, putting us in
case (2) of Definition~\ref{def:minram}.
Thus $\Art(\gothp_K, F/K)\in \Gal(F/E)$ is
a well-defined element of $\Gal(F/\Que)$.
As $\Gal(F/E)$ is contained in the center of $\Gal(F/\Que)$,
and equal to it if $\Que\subset F$ is dihedral,
\begin{equation}
\label{eqn:p-partabc}
[a,b,c]_{F, p} = \Art(\gothp_K, F/K) \in \Gal(F/E)
\end{equation}
only depends on $F$ and $p$, not on $\gothp_K|p$ in $K$.
For $p\nmid c$ we put $[a,b,c]_{F,p}=\id_F$.

For $c<0$, we have $a, b>0$ by condition \eqref{eqn:hilbabc} for $p=\infty$,
so $E=\Que(\sqrt a, \sqrt b)$ is totally real, and the decomposition group at
every infinite prime of $F$ is generated by the Frobenius at infinity
$$
[a,b,c]_{F, \infty}=\Art(\infty, F/K) \in \Gal(F/E).
$$
For $c>0$ we put $[a,b,c]_{F,\infty}=\id_F$.

With this notation, the R\'edei symbol in \eqref{eqn:redeisymbol}
becomes a product
\begin{equation}
\label{eqn:prodp-parts}
[a,b,c]=\prod_{p\le\infty} [a,b,c]_{F, p}\ \in \Gal(F/E)
\end{equation}
of its \emph{$p$-parts}.
The infinite product \eqref{eqn:prodp-parts} is well-defined in
$\Gal(F/E)$, as we can only have $[a,b,c]_{F,p}\ne \id_F$ for primes $p|c$,
with $\infty|c$ having the meaning $c<0$.

As the prime $\gothp_K$ in the Artin symbol 
$\Art(\gothp_K, F/K)=[a,b,c]_{F,p}$ for $p|c$ in \eqref{eqn:p-partabc}
splits in $K\subset E$, we can view it as the Artin symbol of 
a prime $\gothp_E|p$ of $E$ in the quadratic extension 
$E\subset F=E(\sqrt\beta)=E(\sqrt{\beta'})$.
As $\gothp_E$ is unramified in $E\subset F$, its norm to $K_a$
is a prime $\gothp$ of degree 1 over $p$ in~$K_a$ that
is unramified in at least one of the quadratic extensions
$K_a(\sqrt\beta)$ and $K_a(\sqrt{\beta'})$ of $K_a$.
Replacing~$\gothp$ by a conjugate prime in $K_a$ if necessary, 
we can take it to be unramified in $K_a\subset K_a(\sqrt\beta)$.
We can then compute the $p$-part of $[a,b,c]$ as
\begin{equation}
\label{eqn:ppartoverKa}
[a,b,c]_{F,p} = \Art(\gothp, K_a(\sqrt\beta)/K_a)\in\{\pm1\}.
\end{equation}
This shows that $[a,b,c]_{F,p}$ is essentially a Legendre symbol
$\legendre{\beta}{\gothp}$ in the field $K_a$. 
The reason that we choose its value to lie in $\{\pm1\}$
rather than in $\FF_2$, which is of course `only' a matter of notation,
is not just a Legendre symbol tradition,
or the fact that Galois groups like $\Gal(F/E)$ tend to be written as
multiplicative groups.
The point is that, for $\gothp|p$ unramified in
$K_a\subset K_a(\sqrt\beta)$, the $p$-part of $[a,b,c]$
\emph{is} the quadratic Hilbert symbol
\begin{equation}
\label{eqn:ppartishilb}
[a,b,c]_{F,p} = (\beta, \pi)_\gothp\in\{\pm1\}
\end{equation}
of $\beta$ and a uniformizer $\pi$ in the completion of
$K_a$ at $\gothp$.
For $c<0$ and $p=\infty$, we have $[a,b,c]_{F,\infty} = (\beta, -1)_\gothp$,
as the archimedean nature of $F=E(\sqrt\beta)$ is determined 
by the sign of $\beta$ at a real prime $\gothp$ of $K_a$.
Respecting tradition, we have refrained from using Hilbert symbols
with values in $\FF_2$.

It is clear from the symmetry in $a$ and $b$ of the definition of
the R\'edei symbol $[a,b,c]$ that we have $[a,b,c]=[b,a,c]$ whenever
the symbol is defined.
In order to prove the non-trivial \emph{reciprocity law}
$[a,b,c]=[a,c,b]$ in Theorem \ref{thm:reclaw}, we choose a minimally ramified
extension $F=E(\sqrt\beta)$ of $K=\Que(\sqrt{ab})$ as in \eqref{eqn:Fxyz}
in order to express $[a,b,c]$
as a product of $p$-parts $[a,b,c]_{F,p}$ as in \eqref{eqn:prodp-parts},
and similarly a minimally ramified
extension $F'=E'(\sqrt\gamma)$ of $K'=\Que(\sqrt{ac})$ 
in order to express $[a,c,b]$ as a product of $[a,c,b]_{F',p}$.
Here $\beta, \gamma\in \Que(\sqrt a)^*$ are elements of norm 
$b, c\in \Que^*/{\Que^*}^2$, and the fields
$F$ and $F'$ are the normal closures of $\Que(\sqrt a,\sqrt\beta)$
and $\Que(\sqrt a,\sqrt\gamma)$, respectively.
In the spirit of \eqref{eqn:ppartishilb},
we then have the following key lemma.
\begin{lemma}
\label{lemma:keylemma}
Let $a,b,c\in\Que^*/{\Que^*}^2$ be non-trivial elements
satisfying \eqref{eqn:hilbabc} and \eqref{eqn:abccoprime}, and 
$F=E(\sqrt\beta)$ and $F'=E'(\sqrt\gamma)$ 
minimally ramified extensions of $K=\Que(\sqrt{ab})$ and
$K'=\Que(\sqrt{ac})$ defined as above.
For all rational primes $p\le\infty$, we then have
\begin{equation}
\label{eqn:locrec}
[a,b,c]_{F,p}\cdot [a,c,b]_{F',p} =
\prod_{\gothp|p\text{ in }\Que(\sqrt a)} (\beta, \gamma)_\gothp.
\end{equation}
\end{lemma}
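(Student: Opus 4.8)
The plan is to prove \eqref{eqn:locrec} separately at each rational prime $p\le\infty$. Fixing $p$, I would rewrite the two $p$-parts on the left as Hilbert symbols over $K_a=\Que(\sqrt a)$ using \eqref{eqn:ppartoverKa} and \eqref{eqn:ppartishilb}: the factor $[a,b,c]_{F,p}$ is built from $\beta$ and is trivial unless $p|c$ (or $p=\infty$ and $c<0$), while $[a,c,b]_{F',p}$ is built from $\gamma$ and is trivial unless $p|b$ (or $p=\infty$ and $b<0$). On the right I would expand $\prod_{\gothp|p}(\beta,\gamma)_\gothp$ by the bimultiplicativity and symmetry of the local pairing over $K_a$, writing $\beta$ and $\gamma$ at each $\gothp$ as a unit times a power of a uniformizer. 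The uniformizer contributions should reproduce the two Artin symbols on the left, and the surviving unit--unit contributions should cancel.

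First I would clear the good primes. For odd $p\nmid abc$ the prime $p$ is unramified in $K_a$, and minimal ramification (Definition \ref{def:minram}) forces $F/K_a$ and $F'/K_a$ to be unramified at $p$, so $\beta$ and $\gamma$ are units modulo squares at every $\gothp|p$; then each $(\beta,\gamma)_\gothp=1$ and both left-hand $p$-parts vanish. For an odd $p$ dividing exactly one of $b,c$, say $p|c$ and $p\nmid ab$, the hypothesis $(a,c)_p=1$ from \eqref{eqn:hilbabc} forces $p$ to split in $K_a$; since $N\beta=b$ and $N\gamma=c$, the element $\beta$ is a unit modulo squares at both primes above $p$ while $\gamma$ uniformizes exactly one of them. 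Thus only the prime where $\gamma$ uniformizes contributes, giving $(\beta,\gamma)_\gothp=\Art(\gothp,K_a(\sqrt\beta)/K_a)=[a,b,c]_{F,p}$, with $[a,c,b]_{F',p}=1$; here the remaining hypothesis $(b,c)_p=1$ is exactly what guarantees that the two conjugate primes above $p$ yield the same Artin symbol, so that $[a,b,c]_{F,p}$ is unambiguous. The case $p|b$, $p\nmid ac$ is symmetric.

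The cases where $p$ meets two of the arguments are more delicate. If an odd $p$ divides two of $a,b,c$ --- allowed by \eqref{eqn:abccoprime}, which only forbids $p|\gcd(\Delta(a),\Delta(b),\Delta(c))$ --- then both $\beta$ and $\gamma$ can have odd valuation at primes over $p$, and I would evaluate $(\beta,\gamma)_\gothp$ by the tame formula, tracking the cross term $(\pi,\pi)_\gothp=(-1,\pi)_\gothp$ and, when $p|a$, the fact that $p$ ramifies in $K_a$ so that a single prime lies above it; the norm relations $N\beta=b$, $N\gamma=c$ pin down the relevant valuations and residues. At $p=\infty$, condition \eqref{eqn:hilbabc} makes $E$ totally real precisely when $c<0$ (respectively $b<0$), so the Frobenius-at-infinity terms $(\beta,-1)_\gothp$ (respectively $(\gamma,-1)_\gothp$) are read from the signs of $\beta,\gamma$ at the real primes of $K_a$ and match the archimedean Hilbert symbols on the right.

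The crux, and the step I expect to be hardest, is the dyadic prime $p=2$. Here being a unit no longer suffices to evaluate $(\beta,\gamma)_\gothp$, and one needs congruences modulo $4\Ocal$ in the completions of $K_a$ over $2$, as in the proof of Proposition \ref{prop:F4unramatp}. In the genuinely ramified situation $\Delta(a)\congr4\mod8$ with $\Delta(b)$ or $\Delta(c)\congr5\mod8$, ramification cannot be avoided, and it is precisely the $2$-minimal normalization of Definition \ref{def:2minram} --- forcing $\beta\in1+2\Ocal$ of local conductor $2$, and likewise for $\gamma$ --- that makes the $2$-adic symbols well defined and lets them be matched with $[a,b,c]_{F,2}$ and $[a,c,b]_{F',2}$. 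Reconciling the separately chosen minimal-ramification data for $F$ and for $F'$ against the single symmetric pairing $(\beta,\gamma)_\gothp$ over $K_a$ is exactly the point mishandled in the earlier treatments of Corsman and Smith, and I expect it to demand the most careful local bookkeeping.
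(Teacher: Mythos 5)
Your overall strategy is the same as the paper's: compare the two sides prime by prime, convert the $p$-parts into Hilbert symbols over $K_a=\Que(\sqrt a)$ via \eqref{eqn:ppartoverKa} and \eqref{eqn:ppartishilb}, and let minimal ramification control the bad primes. Your easy cases (odd $p\nmid abc$, odd $p$ dividing exactly one of $b,c$ but not $a$, and $p=\infty$) are handled correctly. But the two cases that carry the actual content of the lemma are left as plans rather than proofs, and in both you are missing the reduction that makes them work. For odd $p$ dividing two of the arguments you propose to evaluate tame symbols while ``tracking the cross term $(\pi,\pi)_\gothp$''; the paper never meets such a term. Its proof opens with two symmetry observations: both sides of \eqref{eqn:locrec} are symmetric in $b$ and $c$, and the right-hand side $R_p$ is unchanged when $\beta$ is replaced by its conjugate $\beta'$, since $R_pR_p'=\prod_{\gothp\mid p}(b,\gamma)_\gothp=(b,c)_p=1$. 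Since the norm relation forces $\beta$ (resp.\ $\gamma$) to uniformize exactly one of the two primes over a split $p$ dividing $b$ (resp.\ $c$), one may conjugate so that $\beta$ is a unit at $\gothp_1$ and a uniformizer at $\gothp_2$ while $\gamma$ is the reverse; then $R_p=(\beta,\gamma)_{\gothp_1}(\beta,\gamma)_{\gothp_2}$ is \emph{literally} the product of the two Artin symbols by \eqref{eqn:ppartishilb}, with no residual terms. Without this trick your computation is not wrong in principle, but the assertion that ``the norm relations pin down the relevant valuations and residues'' is exactly the statement needing proof, and you have not carried it out. (Also, the trivial case $p\mid a$, $p\nmid bc$ is absent from your enumeration.)

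The dyadic prime is the genuine gap. You correctly point at Definition \ref{def:2minram}, but the actual argument is a subcase analysis in which the hypotheses \eqref{eqn:hilbabc} are consumed step by step: for $bc$ odd, $(b,c)_2=1$ forces one of $b,c\equiv1\bmod4$, and $(a,c)_2=1$ then reduces everything to the single hard case $a\equiv-1\bmod4$, $b\equiv c\equiv5\bmod8$, where both $F$ and $F'$ are unavoidably ramified at the prime $\gothp\mid2$ of $K_a$; there the proof is a conductor argument: $2$-minimality makes $\Que_2(\sqrt a)\subset F\otimes\Que_2$ of conductor $\gothp^2=(2)$ while $\gamma\equiv1\bmod\gothp^2$, so $\gamma$ is a local norm and $(\beta,\gamma)_\gothp=1$. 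In the subcases with $b$ or $c$ even, one needs that \eqref{eqn:hilbabc} forces $a\equiv1\bmod8$ whenever both $\Delta(b)$ and $\Delta(c)$ are even, hence $(2)=\gothp_1\gothp_2$ splits in $K_a$, and one must again choose conjugates so that $K_a(\sqrt\beta)$ is unramified at $\gothp_1$ and $K_a(\sqrt\gamma)$ at $\gothp_2$ before the symbols can be matched. None of these computations appear in your proposal; they are where Definitions \ref{def:minram} and \ref{def:2minram} and the hypotheses \eqref{eqn:hilbabc}, \eqref{eqn:abccoprime} do their work, so announcing them as ``careful local bookkeeping'' leaves the lemma unproved precisely at its crux.
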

\begin{proof}
We denote the left and right hand side of \eqref{eqn:locrec}
by $L_p$ and $R_p$, respectively, and note that $L_p$ and $R_p$ are
symmetric in $b$ and $c$.
Moreover, we can replace $\beta$ (or~$\gamma)$ in~$R_p$ by its conjugate
without changing the value of $R_p$, as the expression $R_p'$ obtained by
replacing $\beta$ by $\beta'$ satisfies
$R_pR_p'=\prod_{\gothp|p}(b, \gamma)_\gothp=(b,c)_p=1$.

For $p=\infty$, condition \eqref{eqn:hilbabc} implies
that at most one of $a, b, c$ is negative.
If they are all positive, we have $L_\infty=1$, and
both $\beta$ and $\gamma$ are totally
positive or negative in the real quadratic field $K_a=\Que(\sqrt a)$.
The symbols $(\beta,\gamma)_\gothp$ at the two
infinite primes of $K_a$ then have the same value,
so we also have $R_\infty=1$.
If only $a$ is negative, we have $L_\infty=1=R_\infty$, 
as the unique infinite prime of $K_a$ is complex.

If $a$ is positive and exactly one of $b$ and $c$, say $c$, is negative,
$L_\infty$ is the Frobenius at infinity in 
$E\subset F=E(\sqrt\beta)$, which equals 1 if 
$\beta\in K_a^*$ is totally positive, and $-1$ if 
$\beta$ is totally negative.
As $\gamma$ has a positive and a negative embedding in~$\RR$,
the same value is taken by the product 
$R_\infty=(\beta, \gamma)_{\infty_1}(\beta, \gamma)_{\infty_2}$
of the Hilbert symbols at the infinite primes of $K_a$.
This settles the case $p=\infty$.

For $p$ a finite prime, take $a, b, c$ to be squarefree integers.
Condition~\eqref{eqn:abccoprime} implies that
$p$ divides at most two of $a, b, c$.
If $p$ divides $b$, it is split or ramified in $K_a$, and
$\beta$ is, up to squares in $K_a^*$, a uniformizer at a prime
$\gothp_1|p$ and, in the split case $(p)=\gothp_1\gothp_2$,
a unit at the other prime $\gothp_2|p$ in $K_a$.
If $p$ does not divide $b$, then the minimal ramification of $K\subset F$
implies that $\beta$ is a $p$-unit, up to squares in $K_a^*$.
For odd $p$ this means that $\sqrt\beta\in F$
generates an extension of $K_a$ that is unramified over $p$.
Analogous statements apply to $c$ and $\gamma$.

Suppose first that $p$ is odd.
If $p$ does not divide $bc$, we have $L_p=1=R_p$,
as the Hilbert symbols $(\beta,\gamma)_\gothp$ at $\gothp|p$
are equal to 1 for $p$-units $\beta$ and $\gamma$.
If $p$ divides exactly one of $b, c$, say~$c$,
we can take $\beta$ to be a $p$-unit, with square root in $F$ that
is unramified over $p$, and $\gamma$ a uniformizer at a prime $\gothp_1|p$.
By \eqref{eqn:ppartishilb}, we then have
\begin{equation}
\label{eqn:artishilb}
L_p = [a,b,c]_{F, p} = (\beta,\gamma)_{\gothp_1}. 
\end{equation}
In the split case $(p)=\gothp_1\gothp_2$, we further have
$(\beta,\gamma)_{\gothp_2}=1$, as
both $\beta$ and $\gamma$ are units at $\gothp_2$.
This yields $L_p=R_p$ both in the ramified and in the split case.

If $p$ divides both $b$ and $c$, it does not divide $a$, so
we are in the split case $(p)=\gothp_1\gothp_2$ in $K_a$.
After replacing $\beta$ by its conjugate, if necessary, 
$\beta$ is a unit at $\gothp_1$ and a uniformizer at $\gothp_2$, 
whereas $\gamma$ is a uniformizer at~$\gothp_1$ and a unit at $\gothp_2$.
Again by \eqref{eqn:ppartishilb},
\begin{equation}
\label{eqn:twofrobs}
L_p = [a,b,c]_{F, p} \cdot [a,c,b]_{F', p} =
	(\beta,\gamma)_{\gothp_1} (\beta,\gamma)_{\gothp_2} = R_p,
\end{equation}
so we have proved our lemma for odd $p$.

For $p=2$, we need a finer distinction as $2\nmid b$, and even
$2\nmid\Delta(b)$, does not imply that the minimally ramified extension
$K\subset F$ is unramified over 2, and that $\sqrt \beta$
generates a subextension of $K_a\subset F$ that is unramified over 2.
For $2\nmid\Delta(b)$, or $b\congr 1\mod 4$, 
Definition \ref{def:minram} shows that it does in all cases
except in the case $a\congr -1(4)$ and $b\congr5\mod8$.
For $b\congr -1\mod 4$, when 2 divides $\Delta(b)$ but not $b$, we do know
that $\beta$ is, up to squares in $K_a^*$, a 2-adic unit.
Moreover, for $\Delta(b)$ even and 2 split in $K_a$,
the extension $K_a\subset K_a(\sqrt\beta)$ is unramified at 
one prime over~2, and ramified at the other.
Same for $c$ and $\gamma$.

Suppose first that $bc$ is odd.
Then we have $L_2=1$, and we take $\beta$ and $\gamma$ 
to be 2-units.
By the condition $(b,c)_2=1$ at least one of $b, c$, say $b$, is $1\mod 4$.
For $c\congr-1\mod4$, the condition $(a,c)_2=1$ implies
$a\not\congr-1\mod4$, so the minimally ramified extension
$K\subset F$ is unramified over 2, and all
Hilbert symbols $(\beta, \gamma)_\gothp$ at primes $\gothp|2$ in $K_a$
occurring in $R_2$ equal 1,
as $\gamma$ is a unit at $\gothp$ and $K_a\subset K_a(\sqrt\beta)$
is unramified at $\gothp$.
For $c\congr 1\mod 4$, Definition \ref{def:minram} tells us
that we are in the same situation, with $R_2=1$ because
one of $\beta$, $\gamma$ is a $\gothp$-unit and the other 
has a $\gothp$-unramified square root, provided that
either we have $a\not\congr-1\mod 4$ or one of $b, c$ is $1\mod 8$.
The remaining special case $a\congr-1\mod 4$ and $b\congr c\congr5\mod 8$
is when both $K_a\subset F$ and $K_a\subset F'$ are ramified at the prime
$\gothp|2$ of $K_a$.
This is where the \emph{minimal} ramification at 2 of the extensions
$K\subset F$ and $K\subset F'$ from Definition~\ref{def:2minram}
is essential:
once more we have $R_2=(\beta,\gamma)_\gothp=1$, as
$\sqrt\beta$ generates an extension of conductor 2 of the
completion $\Que_2(\sqrt a)$ of $K_a$ at $\gothp$, and 
$\gamma$ is 1 modulo $\gothp^2=(2)$ in~$K_a$.
This proves $L_2=1=R_2$ for $bc$ odd.

If exactly one of $b, c$ is even, say $c$, the condition
$(a,c)_2=(b,c)_2=1$ implies $a, b\not\congr5\mod 8$.
For $b\congr1\mod 8$, the minimally ramified extension $K\subset F$,
and therefore $K_a\subset K_a(\sqrt \beta)$, is unramified over 2.
In this case, we have
$$
L_2=[a,b,c]_{F,2}=(\beta,\gamma)_{\gothp_1} =R_2
$$
just as in the case of odd $p$, as we can take
$\gamma$ to be a uniformizer at $\gothp_1|2$ and, in the split case,
a unit at the other prime $\gothp_2$.
In the other case
$b\congr-1\mod4$ both $\Delta(b)$ and $\Delta(c)$ are even, so 
we have $a\congr 1\mod 8$ and $(2)=\gothp_1\gothp_2$ in $K_a$.
In this case,
$\sqrt\beta$ and $\sqrt\gamma$ generate extensions of $K_a$ that
are ramified at one prime over 2, and unramified at the other.
Replacing $\beta$ or $\gamma$ by their conjugate if necessary, we can
assume that $K_a\subset K_a(\sqrt \beta)$ is unramified at $\gothp_1$
and $K_a\subset K_a(\sqrt \gamma)$ unramified at~$\gothp_2$.
Up to squares, $\gamma$ is a then a uniformizer at $\gothp_1$ and
$\beta$ a unit at $\gothp_2$, so we have
$$
L_2=[a,b,c]_{F,2}=(\beta,\gamma)_{\gothp_1}=
(\beta,\gamma)_{\gothp_1}(\beta,\gamma)_{\gothp_2}=R_2.
$$
Finally, for $b$ and $c$ both even, we are also in the split case,
as $\Delta(a)$ is odd and $(a,b)_2=(a,2)_2=1$ implies $a\congr1\mod8$.
As above, we can choose $K_a\subset K_a(\sqrt \beta)$ unramified at $\gothp_1$
and $K_a\subset K_a(\sqrt \gamma)$ unramified at $\gothp_2$.
Up to squares, this makes $\beta$ a uniformizer at $\gothp_2$
and $\gamma$ a uniformizer at $\gothp_1$.
We obtain
$$
L_2 = [a,b,c]_{F, 2} [a,c,b]_{F', 2} =
        (\beta,\gamma)_{\gothp_1} (\beta,\gamma)_{\gothp_2} = R_2,
$$
and we have finished the proof of Lemma \ref{lemma:keylemma}.
\end{proof}
\noindent
{\bf Proof of Theorem \ref{thm:reclaw}.}
By Lemma \ref{lemma:keylemma}, the sum in $\FF_2$ of the R\'edei symbols 
$[a,b,c]$ and $[a,c,b]$, when defined as in \eqref{eqn:redeisymbol}
with the help of $F=E(\sqrt\beta)$ and $F'=E'(\sqrt\gamma)$,
respectively, is the additive analogue of
$\prod_{\gothp\le\infty} (\beta, \gamma)_\gothp\in\{\pm1\}$,
where the product ranges over all primes $\gothp\le\infty$ of $\Que(\sqrt a)$.
By the product formula for Hilbert symbols, this product is equal to 1,
so we have $[a,b,c]=[a,c,b]$, as desired.
As we can trivially swap $a$ and $b$ in $[a,b,c]$, this shows that
the R\'edei symbol is perfectly symmetric in its 3 arguments.

The linearity of $[a,b,c]$ in $c$ is clear from
its description as a product of Artin symbols 
$[a,b,c]_p$ of order 2 at the primes $p|c$. 
It must therefore be linear in all arguments.
\qed
\begin{corollary}
\label{cor:indepF}
The value of the symbol $[a,b,c]$ in \eqref{eqn:redeisymbol}
is the same for all $K\subset F_{a,b}$ that are minimally ramified 
over $\Que(\sqrt a, \sqrt b)$.
\end{corollary}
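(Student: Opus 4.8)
The plan is to read the corollary off the proof of Theorem~\ref{thm:reclaw} that has just been given, rather than to re-inspect Definition~\ref{def:redeisymbol} choice by choice. Write $[a,b,c]_F$ for the value of \eqref{eqn:redeisymbol} computed with a particular minimally ramified field $F$. The crucial observation is that Lemma~\ref{lemma:keylemma} together with the product formula \eqref{eqn:productformula} is not a statement about one lucky pair of auxiliary fields: its hypotheses \eqref{eqn:hilbabc} and \eqref{eqn:abccoprime} are symmetric in $b$ and $c$, and by solvability of the conic $x^2-ay^2-cz^2=0$ (the analogue of \eqref{eqn:normab}, guaranteed by $(a,c)_p=1$ for all $p$) a minimally ramified $F'=E'(\sqrt\gamma)$ over $K'=\Que(\sqrt{ac})$ exists. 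Hence the identity $[a,b,c]_F\cdot[a,c,b]_{F'}=\prod_{\gothp}(\beta,\gamma)_\gothp=1$ holds for \emph{every} pair of minimally ramified extensions $F=E(\sqrt\beta)$ of $K=\Que(\sqrt{ab})$ and $F'=E'(\sqrt\gamma)$ of $K'$.

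First I would fix one minimally ramified $F'$ computing $[a,c,b]_{F'}$ and let $F$ range over all minimally ramified extensions of $K$. Since the values lie in $\{\pm1\}$, the displayed identity becomes $[a,b,c]_F=[a,c,b]_{F'}$, whose right-hand side is a constant not involving $F$. Thus $[a,b,c]_F$ is the same for every admissible $F$, which is precisely the corollary; the degenerate case where one of $a,b,c$ is trivial needs no argument, as the symbol is then set to $1$ with no auxiliary field entering. The one thing to verify is that Lemma~\ref{lemma:keylemma} is genuinely available for an \emph{arbitrary} minimally ramified $F$, not merely for some distinguished field, but this is exactly the generality in which it is stated and proved, so there is no circularity and no extra work. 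I expect no real obstacle here: the content has already been absorbed into the reciprocity computation.

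As an independent check --- and the route whose bookkeeping I would find most delicate were I to insist on it --- one can compare two minimally ramified fields directly. By Lemma~\ref{lemma:twistedF} any second choice is a quadratic twist $F_t$ with $t\in T_{a,b}$, and writing $F_tF=E(\sqrt\beta,\sqrt t)$ one sees from the three intermediate quadratic extensions of $E\subset E(\sqrt\beta,\sqrt t)$ that $[a,b,c]_{F_t}=[a,b,c]_F\cdot\Art_c(E(\sqrt t)/K)$, the correction being the Artin symbol of $\gothc$ (together with the infinite prime when $c<0$) in the quadratic extension $E\subset E(\sqrt t)$. The hard part would then be to show this correction is trivial for all $t\in T_{a,b}$, which again comes down to the product formula \eqref{eqn:productformula} and the genus-theoretic description of $T_{a,b}$ recorded in Lemma~\ref{lemma:twistedF}. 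Since the first approach already yields the full conclusion from Lemma~\ref{lemma:keylemma}, I would relegate this computation to a remark.
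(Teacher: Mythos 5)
Your proposal is correct and follows essentially the same route as the paper: the paper's own proof simply invokes Theorem~\ref{thm:reclaw} to identify $[a,b,c]$ with $[a,c,b]$, which involves no choice of extension of $K=\Que(\sqrt{ab})$, exactly as in your first argument. Your version is slightly more explicit in noting that Lemma~\ref{lemma:keylemma} and the product formula give $[a,b,c]_F=[a,c,b]_{F'}$ for \emph{every} pair of minimally ramified $F$, $F'$ (which is what makes the appeal non-circular), but this is the same argument, and your alternative twist-by-twist comparison is rightly relegated to a remark.
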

\begin{proof}
By Theorem \ref{thm:reclaw}, the symbol is equal to $[a,c,b]$, which
is defined independently of a choice $K\subset F_{a,b}$.

One can of course also prove this directly: by Lemma \ref{lemma:twistedF},
two $F$'s that are minimally ramified over $\Que(\sqrt a, \sqrt b)$
differ by a twist $t\in T_{a,b}$, and 
twisting $F=F_{a,b}$ in \eqref{eqn:redeisymbol}
changes the value of $[a,b,c]$ by $\chi_t(c)$, which equals 
0 for $t\in T_{a,b}$ 
by the conditions \eqref{eqn:hilbabc} and \eqref{eqn:abccoprime}.
\end{proof}
\noindent
Even though the symbol $[a,b,c]$ itself is independent of the choice
of $F$ in \eqref{def:redeisymbol}, its $p$-parts $[a,b,c]_{F,p}$
in \eqref{eqn:prodp-parts} {\it do\/} depend on the 
minimally ramified extension $K\subset F$.

It is also possible to define $[a,b,c]$ as an Artin symbol in an 
abelian extension $K\subset \calF_{a,b}$ that is uniquely defined in
terms of $a$ and $b$.
For any minimally ramified extension
$K\subset F$ as in \eqref{eqn:Fxyz}, we can take the compositum 
$$
\calF_{a,b}=FG_{a,b},
$$
of $F$ with the multiquadratic extension $G_{a,b}$ 
obtained by adjoining the square roots $\sqrt t$ of the
elements $t\in T_{a,b}$ from \eqref{eqn:Tab}.
By Lemma \ref{lemma:twistedF}, the number field
$\calF_{a,b}$ is the compositum of \emph{all} minimally ramified
extensions $K\subset F$, so it is uniquely defined in terms of $a$ and $b$.
We now replace $F_{a,b}$ by $\calF_{a,b}$ in \eqref{eqn:redeisymbol} and 
define the R\'edei symbol
$
[a,b,c]\in \Gal(\calF_{a,b}/G_{a,b})=\FF_2
$
as
\begin{equation}
\label{invariantdef}
[a,b,c]=\Art_c(\calF_{a,b}/K)=
\begin{cases}
        \Art (\gothc, \calF_{a,b}/K) &\text{if}\quad c>0;\\
        \Art (\gothc\infty, \calF_{a,b}/K) &\text{if}\quad c<0.\\
\end{cases}
\end{equation}

\noindent
Although Definition \eqref{invariantdef} is in many ways 
the `correct' definition of $[a,b,c]$,
it has the psychological disadvantage of being defined
using a field $\calF_{a,b}$ that is potentially very large.
For the proof of the reciprocity of the symbol, and for actual 
computations of R\'edei symbols, the $p$-parts of $[a,b,c]$,
which are simply Legendre symbols in quadratic fields
such as $K_a=\Que(\sqrt{a})$ by \eqref{eqn:ppartoverKa},
are handled more easily.

\section{Governing fields}
\label{S:Governing fields}

\noindent
An immediate application of R\'edei's reciprocity law in the form
we have stated it is the existence of \emph{governing fields}
for the 8-rank of the narrow class group $C(dp)$ of the quadratic field
$\Que(\sqrt{dp})$, with $d$ a fixed squarefree integer and $p$
a variable prime.
By this, we mean that there exists a normal number field
$\Omega_{8,d}$ with the property that for primes $p, p'\nmid d$ that are
coprime to its discriminant and have the same Frobenius conjugacy class
in $\Gal(\Omega_{8,d}/\Que)$, the groups $C(dp)/C(dp)^8$ and 
$C(dp')/C(dp')^8$ are isomorphic.

Theorem \ref{theorem:2-rank} trivially implies that
$\Omega_{2,d}=\Que(i)$ is a governing field for the 2-rank
of $C(dp)$. 
By the explicit form \eqref{eqn:R4entries} of
Theorem \ref{theorem:4-rank}, we can take the multi-quadratic field
$$
\Omega_{4,d}=\Que(i, \{\sqrt p: p|d\ \hbox{prime}\})
$$
as a governing field for the 4-rank of $C(dp)$.

Now suppose $p$ and $p'$ are primes that are unramified in $\Omega_{4,d}$
and have the same Artin symbol in $\Gal(\Omega_{4,d}/\Que)$.
Then the R\'edei matrices $R_4$ and $R'_4$ for $C(dp)$ and $C(dp')$
as given in \eqref{eqn:R4entries} \emph{coincide} if
the primes in $dp$ and $dp'$ are numbered in the obvious compatible way.
This implies that the 8-rank maps in \eqref{eqn:R8} can be described by
matrices $R_8$ and $R'_8$ for $C(dp)$ and $C(dp')$ with entries
given by \eqref{eqn:R8entries} that may be compared `entry-wise'.
In other words, every entry $[d_1, d_2, m]$ from
Definition \ref{def:redeisym} in the matrix $R_8$ for $D=d_1d_2\in\{dp, 4dp\}$
corresponds to a R\'edei symbol $[d'_1, d'_2, m']$ for $R_8'$ 
in which the arguments are obtained by replacing every prime factor $p$
in the entries of $[d_1, d_2, m]$ by the factor $p'$.

Possibly switching the role of $d_1$ and $d_2$, we may assume that
all symbols $[d_1, d_2, m]$ in $R_8$ have $p\nmid d_1$.
Moreover, if we have $p|m$ in a symbol $[d_1, d_2, m]$, we can
add the trivial symbol $[d_1, d_2, -d_1d_2]$ from
Proposition \ref{prop:ab-ab} to it to rewrite it as a symbol
\begin{equation}
[d_1, d_2, m]=[d_1, d_2, -d_1d_2/m]=[d_1, d_2, dp/m]
\end{equation}
with $p\nmid (dp/m)$.
Thus, we may write the entries of $R_8$ as $[d_1, d_2, m]$ with $p\nmid d_1m$.
Then the entries of $R_8'$ become $[d'_1, d'_2, m']=[d_1, d'_2, m]$.

In order to show that the value of $[d_1, d_2, m]$ for $p\nmid d_1m$
is governed by the splitting behavior of $p$ in some
finite extension of $\Omega_{4,d}$, it suffices to rewrite it 
using Theorem \ref{thm:reclaw} as
$$
[d_1, d_2, m]=[d_1, m, d_2],
$$
and observe that we now have
$[d_1, m, d_2]=[d_1, m, d'_2]$ for $d'_2=p'd_2/p$ whenever 
$p$ and $p'$ have the same splitting behavior in $\Omega_{4,d}(\sqrt\mu)$,
with $\mu\in \Que(\sqrt{d_1})$ an element with norm in $m\cdot{\Que^*}^2$
that generates a minimally ramified extension of $K=\Que(\sqrt{md_1})$
as in \eqref{eqn:Fxyz} for $(a, b)=(d_1, m)$.
Note that $\Omega_{4,d}\subset \Omega_{4,d}(\sqrt\mu)$ is
unramified outside $2d$.

Taking $\Omega_{8,d}$ to be the compositum of the fields
$\Omega_{4,d}(\sqrt\mu)$ arising for each of the entries $[d_1,d_2,m]$ of $R_8$,
we see that $R_8$ and $R_8'$ coincide for primes $p$, $p'$ having the 
same Frobenius conjugacy class in $\Gal(\Omega_{8,d}/\Que)$.
We arrive at the following theorem, which was proved
in a more involved way in 1988 in \cite{Stev}.
The short proof we gave above already occurs in \cite{Corsman}.
\begin{theorem}
A governing field $\Omega_{8,d}$ for the 8-rank of $C(dp)$ exists, and 
one can take for it the maximal exponent $2$ extension of $\Omega_{4,d}$
unramified outside $2d$.
\qed
\end{theorem}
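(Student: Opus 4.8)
The plan is to assemble $\Omega_{8,d}$ as the compositum of the auxiliary quadratic fields that govern the individual entries of the matrix $R_8$, and then to check that this compositum sits inside the maximal exponent-$2$ extension of $\Omega_{4,d}$ unramified outside $2d$. The starting point is the reduction already indicated above: if $p,p'$ lie in the same Frobenius conjugacy class in $\Gal(\Omega_{4,d}/\Que)$, then by \eqref{eqn:R4entries} the R\'edei matrices $R_4,R_4'$ for $C(dp),C(dp')$ coincide, so the $8$-rank matrices $R_8,R_8'$ of \eqref{eqn:R8entries} may be compared entry by entry. Since $C(dp)/C(dp)^8$ is determined up to isomorphism by the ranks $r_2,r_4,r_8$, it will suffice to prove that each entry $[d_1,d_2,m]$ is insensitive to replacing $p$ by a prime $p'$ of equal splitting type in one fixed finite extension of $\Omega_{4,d}$.

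First I would normalise so that the variable prime occupies only the third slot. As $p$ divides exactly one of $d_1,d_2$, I may take $p\nmid d_1$, so that $d_1=d_1'$; and if $p\mid m$ I invoke the trivial symbol $[d_1,d_2,-d_1d_2]=1$ to rewrite $[d_1,d_2,m]=[d_1,d_2,-d_1d_2/m]$, after which $p$ no longer divides the third argument and $m=m'$. The decisive move is then to apply R\'edei reciprocity (Theorem \ref{thm:reclaw}) to relocate $p$ out of the first two arguments,
$$
[d_1,d_2,m]=[d_1,m,d_2].
$$
Both of the first two arguments now divide $d$ and are \emph{independent of} $p$, so the defining field in \eqref{eqn:redeisymbol} becomes a $p$-independent minimally ramified extension $K\subset F$ of $K=\Que(\sqrt{d_1m})$, built as in \eqref{eqn:Fxyz} for $(a,b)=(d_1,m)$ from an element $\mu\in\Que(\sqrt{d_1})$ of norm in $m\cdot{\Que^*}^2$.

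Next I would isolate the sole $p$-dependence of $[d_1,m,d_2]$. The prime $p$ enters the Artin symbol \eqref{eqn:redeisymbol} only through a prime $\gothp\mid p$ of $K$, and by \eqref{eqn:ppartoverKa} this $p$-part is a plain Legendre symbol $\Art(\gothp,K_{d_1}(\sqrt\mu)/K_{d_1})$ in $K_{d_1}=\Que(\sqrt{d_1})$. Its value is therefore fixed by the splitting of $p$ in $K_{d_1}(\sqrt\mu)$, equivalently in $\Omega_{4,d}(\sqrt\mu)$ because $\Omega_{4,d}\supset K_{d_1}$. Since $\mu$ has norm $m\mid d$ and generates a minimally ramified extension, $\Omega_{4,d}\subset\Omega_{4,d}(\sqrt\mu)$ has exponent $2$ and is unramified outside $2d$; hence primes $p,p'$ sharing a Frobenius class in $\Gal(\Omega_{4,d}(\sqrt\mu)/\Que)$ give the same symbol, i.e.\ $[d_1,m,d_2]=[d_1,m,d_2']$ with $d_2'=p'd_2/p$.

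Finally I would take $\Omega_{8,d}$ to be the compositum of the finitely many fields $\Omega_{4,d}(\sqrt\mu)$ arising from the entries of $R_8$; this is contained in, and may be taken to equal, the maximal exponent-$2$ extension of $\Omega_{4,d}$ unramified outside $2d$, which is a finite field since only the finitely many primes dividing $2d$ may ramify. Primes $p,p'$ with a common Frobenius class in $\Gal(\Omega_{8,d}/\Que)$ then restrict to equal Frobenius classes in $\Omega_{4,d}$ and in every $\Omega_{4,d}(\sqrt\mu)$, forcing $R_4=R_4'$ and $R_8=R_8'$ entrywise, whence $C(dp)/C(dp)^8\cong C(dp')/C(dp')^8$. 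I expect the genuine obstacle to be conceptual rather than computational: everything hinges on R\'edei reciprocity, for without it the field $F$ defining $[d_1,d_2,m]$ depends on $p$ through $d_1d_2\in\{dp,4dp\}$ and exhibits no $p$-independent governing field. Relocating $p$ to the Artin-symbol slot is precisely what turns the symbol into a splitting condition in a fixed extension of $\Omega_{4,d}$.
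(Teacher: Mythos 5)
Your proposal is correct and follows essentially the same route as the paper's own proof: the same normalisation $p\nmid d_1$, the same use of the trivial symbol $[d_1,d_2,-d_1d_2]=1$ to arrange $p\nmid m$, the same decisive application of R\'edei reciprocity to move $p$ into the Artin-symbol slot, and the same compositum of the fields $\Omega_{4,d}(\sqrt\mu)$ inside the maximal exponent-$2$ extension of $\Omega_{4,d}$ unramified outside $2d$. The only additions are harmless explicit justifications (e.g.\ that $C(dp)/C(dp)^8$ is determined by $r_2,r_4,r_8$, and the identification of the $p$-part as a Legendre symbol via \eqref{eqn:ppartoverKa}) that the paper leaves implicit.
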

\noindent
The existence of $\Omega_{8,d}$ implies, by the Chebotarev density theorem,
that we can compute the density of the set of primes $p$ for which 
$C(dp)$ has prescribed 2-, 4- and 8-rank.
Cohn and Lagarias \cite{CohnLag} conjectured in 1983 that such
governing fields $\Omega_{2^k}$ should exist for the $2^k$-rank of $C(dp)$
for all $k\ge1$.
Recent work of Milovic and Koymans \cites{Milovic, MilKoymans} establishes
density results for 16-ranks of class groups $C(dp)$
with cyclic 2-part, such as $C(-2p)$,
with error terms that are ``too good" to come
from a governing field, making it unlikely that the conjecture
holds for $2^k$-ranks with $k\ge4$.
Smith \cite{Smith2} nevertheless arrives at proving
\emph{average} distributions for higher $2^k$-ranks by `governing less',
focusing not on individual quadratic fields $K$ but on their
related behavior in well-chosen families.

\section{The negative Pell equation}
\label{S:negative Pell}

\noindent
By \eqref{eqn:Hreal},
proving Conjecture \ref{conj:negpell} entails
controlling the archimedean character of the full narrow
Hilbert class field $H$ of $K=\Que(\sqrt D)$ for the discriminants $D$
in the thin set $\calD$ of discriminants for which the genus field
$H_2$ is totally real.
The 4-Hilbert class field $H_4$ of $K$ can be given explicitly, in the
sense of Corollary \ref{cor:F4unr}, as a
compositum of cyclic quartic extensions $K\subset F_{d_1,d_2}$
that are unramified over the fields $E=\Que(\sqrt {d_1},\sqrt{d_2})$, with
$D=d_1d_2$ ranging over (a basis of) the decompositions of $D$
of the second kind, as characterized in Lemma \ref{lemma:F2-criterion}.
By Definition \ref{def:redeisymbol},
the archimedean character of the dihedral field $F_{d_1,d_2}$ is given by
the R\'edei symbol $[d_1, d_2, -1]=\Art(\infty, F_{d_1,d_2}/K)$,
which is defined since $d_1, d_2\in\calD$ satisfy $(d_1,-1)_p=(d_2,-1)_p=1$
for all $p$.
\begin{theorem}
\label{thm:d1d2-1}
Let $D=d_1d_2$ be a decomposition of the second kind for $D\in\calD$, and
$K\subset F_{d_1,d_2}$ a corresponding unramified extension as
in $(1)$ of Lemma $\ref{lemma:F2-criterion}$.
Then $F_{d_1,d_2}$ is totally real if and only if we have
$$
[d_1, d_2, -1]=\Legendre{d_1}{d_2}_4+\Legendre{d_2}{d_1}_4=0\in\FF_2.
$$
\end{theorem}
\begin{proof}
Add to $[d_1, d_2, -1]$ the symbol
$[d_1, d_2, -d_1d_2]=0$ from Proposition~\ref{prop:ab-ab},
and use the linearity and reciprocity properties of the R\'edei symbol
together with the special case in Definition \ref{def:aac} to obtain 
the desired equality
\begin{align*}
[d_1, d_2, -1] & = [d_1, d_2, d_1d_2]= [d_1, d_2, d_1]+[d_1, d_2, d_2] \\
               & = [d_1, d_1, d_2]+[d_2, d_2, d_1] = 
\Legendre{d_2}{d_1}_4+\Legendre{d_1}{d_2}_4.
\lqedhere
\end{align*}
\end{proof}

\noindent
Theorem \ref{thm:d1d2-1} was already known to R\'edei, but 
his symbol `without infinite primes' cannot be used to give
the short proof above.
A long proof can be found in \cite{FouvKl}*{pp.\ 2061--2064}.

Theorem \ref{thm:d1d2-1} implies that
$H_4$ is totally real if and only if
all R\'edei symbols $[d_1, d_2, -1]$ 
corresponding to $r_4$ decompositions $D=d_1d_2$ of the second
kind spanning $\widehat C[2]\cap 2\widehat C$ vanish.
The explicit form of the symbol as a sum of two biquadratic symbols 
was used by Fouvry and Kl\"uners \cite{FouvKl} to show that
these $r_4$ different R\'edei symbols vanish
for the expected fraction $2^{-e}$ of all discriminants
in the subset $\calD(e)\subset\calD$ of $D$ having $r_4=e$.
As $\calD(e)$ has density $P(e)$ in $\calD$, a short calculation
\cite{StevPell}*{Corollary 4.4}
involving the explicit values following equation \eqref{eqn:densalpha}
shows that the density of $D\in \calD$ with $H_4$ totally real equals
$$
\sum_{e=0}^\infty 2^{-e} P(e)= 
\sum_{e=0}^\infty 2^{-e} \cdot\alpha\cdot \prod_{j=1}^e (2^j-1)^{-1}=
\frac{2}{3}.
$$
In the direction of Conjecture \ref{conj:negpell},
this yields an upper density $\overline P\le \frac{2}{3}\approx .667$ for
the subset $\calD^-$ of $\calD$. 

For discriminants $D\in \calD$ having $r_8=0$, the necessary condition
for $D\in\calD^-$ that $H_4$ be totally real is also
sufficient, and this gives rise to lower bounds.
Discriminants in $\calD(0)$ trivially have $r_8=r_4=0$, and
the inclusion $\calD(0)\subset \calD^-$ 
yields the lower bound $P(0)=\alpha\approx.419$
from \eqref{eqn:densalpha} for the lower density
$\underline P$ of $\calD^-$ in $\calD$ that we already mentioned 
at the end of Section \ref{section:4-rank}.

Discriminants $D\in\calD(1)$ have a unique non-trivial decomposition 
$D=d_1d_2$ of the second kind, and the kernel of the symmetric matrix
$R_4$ is spanned by the discriminantal divisors $d_1$ and $d_2$. 
In this case R\'edei reciprocity for the special symbols in Definition 
\ref{def:aac} shows that $R_8$ has a matrix representation
$$
R_8=
\begin{pmatrix}
[d_1, d_2, d_1]&[d_1, d_2, d_2]
\end{pmatrix}
=
\left(
\Legendre{d_2}{d_1}_4\quad\Legendre{d_1}{d_2}_4
\right)
$$
in terms of biquadratic symbols.
In view of Theorems \ref{theorem:8-rank} and \ref{thm:d1d2-1},
having $r_8=0$ and $H_4$ totally real now amounts~to 
\begin{equation}
\label{eqn:d1221_4}
\Legendre{d_2}{d_1}_4=\Legendre{d_1}{d_2}_4=1\in\FF_2,
\end{equation}
and, again, 
for the expected fraction $\frac{1}{4}$ of discriminants in $\calD(1)$,
both biquadratic symbols are non-trivial \cite{FouvKl2}.
This yields $P(0)+\frac{1}{4}P(1)=\frac{5}{4}\alpha\approx.524$
as a lower bound for the lower density $\underline P$.

For discriminants $D\in\calD(e)$ with $e\ge2$, the R\'edei symbols in the
matrix $R_8$ are not restricted to biquadratic symbols, but 
Chan, Koymans, Milovic and Pagano \cite{CKMP} show that they can still be
`governed' by an adaptation of the methods
in Smith's recent work \cites{Smith, Smith2}.
They prove that the density of discriminants $D\in \calD$ 
of 4-rank $r_4=e$ for which $r_8=0$ and $H_4$ is totally real
equals $2^{-e(e+3)/2}\cdot\alpha$,
extending the trivial case $e=0$ and the special case $e=1$ above.
This improves the lower bound $\alpha+\frac{1}{4}\alpha$ above to
$\sum_{e=0}^\infty 2^{-e(e+3)/2}\cdot\alpha$.
Thus, the published state of affairs towards Conjecture \ref{conj:negpell},
which claims $\underline P=\overline P=1-\alpha=.581$, becomes
$$
\textstyle
.538\approx \sum_{e=0}^\infty 2^{-e(e+3)/2}\cdot\alpha
\  \le \ 
\underline P 
\  \le \ 
\overline P 
\  \le \ 
2/3\approx .667   .
$$
Improving these bounds involves dealing with the remaining 12.8\% of
discriminants $D\in\calD$ having $r_8>0$.
As this article goes to press (March 2021), Koymans and 
Pagano have announced that they have been able to extend
Smith's techniques \cite{Smith2} to also control these discriminants,
and to prove my full Conjecture \ref{conj:negpell} after almost 30 years.


\begin{bibdiv}
\begin{biblist}

\bib{BonehSil}{article}{
   author={Boneh, Dan},
   author={Silverberg, Alice},
   title={Applications of multilinear forms to cryptography},
   conference={
      title={Topics in algebraic and noncommutative geometry},
      address={Luminy/Annapolis, MD},
      date={2001},
   },
   book={
      series={Contemp. Math.},
      volume={324},
      publisher={Amer. Math. Soc., Providence, RI},
   },
   date={2003},
   pages={71--90},
}

\bib{BosSte}{article}{
   author={Bosma, Wieb},
   author={Stevenhagen, Peter},
   title={Density computations for real quadratic units},
   journal={Math. Comp.},
   volume={65},
   date={1996},
   number={215},
   pages={1327--1337},
}

\bib{CKMP}{article}{
   author={Chan, Stephanie},
   author={Koymans, Peter},
   author={Milovic, Djordjo},
   author={Pagano, Carlo},
   title={On the negative Pell equation},
   journal={arXiv:19089.0175032v1},
   date={2019}
}

\bib{CohnLag}{article}{
   author={Cohn, H.},
   author={Lagarias, J. C.},
   title={On the existence of fields governing the $2$-invariants of the
   classgroup of ${\bf Q}(\sqrt{dp})$\ as $p$\ varies},
   journal={Math. Comp.},
   volume={41},
   date={1983},
   number={164},
   pages={711--730},
}

\bib{Corsman}{book}{
   author={Corsman, Jens},
   title={R\'edei symbols and governing fields},
   note={PhD thesis--McMaster University (Canada)},
   publisher={ProQuest LLC, Ann Arbor, MI},
   date={2007},
   pages={79},
   isbn={978-0494-36051-4},
}

\bib{FouvKl}{article}{
author={Fouvry, \'{E}tienne},
   author={Kl\"{u}ners, J\"{u}rgen},
   title={On the negative Pell equation},
   journal={Ann. of Math. (2)},
   volume={172},
   date={2010},
   number={3},
   pages={2035--2104},
}

\bib{FouvKl2}{article}{
   author={Fouvry, \'{E}tienne},
   author={Kl\"{u}ners, J\"{u}rgen},
   title={The parity of the period of the continued fraction of $\sqrt d$},
   journal={Proc. Lond. Math. Soc. (3)},
   volume={101},
   date={2010},
   number={2},
   pages={337--391},
}

\bib{Gaertner}{article}{
   author={G\"artner, Jochen},
   title={R\'edei symbols and arithmetical mild pro-2-groups},
   language={English, with English and French summaries},
   journal={Ann. Math. Qu\'e.},
   volume={38},
   date={2014},
   number={1},
   pages={13--36},
   issn={2195-4755},
}

\bib{Lenstra}{article}{
   author={Lenstra, Hendrik W., Jr.},
   title={Solving the Pell equation},
   conference={
      title={Algorithmic number theory: lattices, number fields, curves and
      cryptography},
   },
   book={
      series={Math. Sci. Res. Inst. Publ.},
      volume={44},
      publisher={Cambridge Univ. Press, Cambridge},
   },
   date={2008},
   pages={1--23},
}

\bib{Milovic}{article}{
   author={Milovic, Djordjo},
   title={On the 16-rank of class groups of $\Que(\sqrt{-8p})$ for
   $p\equiv-1 \mod4$},
   journal={Geom. Funct. Anal.},
   volume={27},
   date={2017},
   number={4},
   pages={973--1016},
}

\bib{MilKoymans}{article}{
   author={Koymans, P.},
   author={Milovic, D. Z.},
   title={Spins of prime ideals and the negative Pell equation
   $x^2-2py^2=-1$},
   journal={Compos. Math.},
   volume={155},
   date={2019},
   number={1},
   pages={100--125},
}

\bib{Minac}{article}{
   author={Min\'a\v c, J\'an},
   author={T\^an, Nguyen Duy},
   title={Construction of unipotent Galois extensions and Massey products},
   journal={Adv. Math.},
   volume={304},
   date={2017},
   pages={1021--1054},
}

\bib{Morishita}{book}{
   author={Morishita, Masanori},
   title={Knots and primes},
   series={Universitext},
   note={An introduction to arithmetic topology},
   publisher={Springer, London},
   date={2012},
   pages={xii+191},
}

\bib{RedRei}{article}{
   author={R\'edei, L.},
   author={Reichardt, H.},
   title={Die Anzahl der durch vier teilbaren Invarianten der Klassengruppe
   eines beliebigen quadratischen Zahlk\"orpers},
   journal={J. Reine Angew. Math.},
   volume={170},
   date={1934},
   pages={69--74},
}

\bib{Redei4}{article}{
   author={R\'edei, L.},
   title={Arithmetischer Beweis des Satzes \"uber die Anzahl der durch vier
   teilbaren Invarianten der absoluten Klassengruppe im quadratischen
   Zahlk\"orper},
   journal={J. Reine Angew. Math.},
   volume={171},
   date={1934},
   pages={55--60},
}

\bib{Redei}{article}{
   author={R\'edei, L.},
   title={Ein neues zahlentheoretisches Symbol mit Anwendungen auf die Theorie der quadra\-tischen Zahlk\"orper. I},
   journal={J. Reine Angew. Math.},
   volume={180},
   date={1939},
   pages={1--43},
}

\bib{Rieger}{article}{
   author={Rieger, G. J.},
   title={\"{U}ber die Anzahl der als Summe von zwei Quadraten darstellbaren und
   in einer primen Restklasse gelegenen Zahlen unterhalb einer positiven
   Schranke. II},
   journal={J. Reine Angew. Math.},
   volume={217},
   date={1965},
   pages={200--216},
}

\bib{Smith}{article}{
   author={Smith, Alexander},
   title={Governing fields and statistics for 4-Selmer groups and 8-class groups},
   journal={ArXiv:1607.07860},
   date={2016},
   pages={1--29},
}

\bib{Smith2}{article}{
   author={Smith, Alexander},
   title={$2^\infty$-Selmer groups, $2^\infty$-class groups, and Goldfeld's conjecture},
   journal={ArXiv:1702.02325},
   date={2017},
   pages={1--72},
}

\bib{Stev}{article}{
   author={Stevenhagen, Peter},
   title={Ray class groups and governing fields},
   conference={
      title={Th\'eorie des nombres, Ann\'ee 1988/89, Fasc.\ 1},
   },
   book={
      series={Publ. Math. Fac. Sci. Besan\c{c}on},
      publisher={Univ. Franche-Comt\'e, Besan\c{c}on},
   },
   date={1989},
   pages={1--93},
}

\bib{StevPell}{article}{
author={Stevenhagen, Peter},
   title={The number of real quadratic fields having units of negative norm},
   journal={Experiment. Math.},
   volume={2},
   date={1993},
   number={2},
   pages={121--136},
   issn={1058-6458},
   review={\MR{1259426}},
}

\bib{StevEis}{article}{
   author={Stevenhagen, Peter},
   title={On a problem of Eisenstein},
   journal={Acta Arith.},
   volume={74},
   date={1996},
   number={3},
   pages={259--268},
}

\end{biblist}
\end{bibdiv}
\end{document}